\definecolor{marin}{rgb}   {0.,   0.3,   0.7} 
\definecolor{rouge}{rgb}   {0.8,   0.,   0.} 
\definecolor{sepia}{rgb}   {0.8,   0.5,   0.} 
\newcommand\N{\mathbb{N}}
\newcommand\T{\mathbb{T}}
\newcommand\Z{\mathbb{Z}}
\newcommand\R{\mathbb{R}}
\newcommand\C{\mathbb{C}}
\newcommand{\dd}{\, \mathrm{d}}
\newcommand{\enstq}[2]{\left\{#1~\middle|~#2\right\}}
\newcommand\eps{\varepsilon}
\renewcommand{\Re}{\operatorname{Re}}
\newcommand\loc{\mathrm{loc}}
\newcommand\Id{\mathrm{Id}}
\newcommand\Ran{\mathrm{Ran}}
\newcommand\Dir{\mathrm{Dir}}
\newcommand\Neu{\mathrm{Neu}}
\DeclarePairedDelimiter\abs{\lvert}{\rvert}%
\DeclarePairedDelimiter\norm{\lVert}{\rVert}%
\newcommand{\tnorm}[1]{{\left\vert\kern-0.25ex\left\vert\kern-0.25ex\left\vert #1 
    \right\vert\kern-0.25ex\right\vert\kern-0.25ex\right\vert}}
\let\oldabs\abs
\def\abs{\@ifstar{\oldabs}{\oldabs*}}
\let\oldnorm\norm
\def\norm{\@ifstar{\oldnorm}{\oldnorm*}}
\def\restriction#1#2{\mathchoice
              {\setbox1\hbox{${\displaystyle #1}_{\scriptstyle #2}$}
              \restrictionaux{#1}{#2}}
              {\setbox1\hbox{${\textstyle #1}_{\scriptstyle #2}$}
              \restrictionaux{#1}{#2}}
              {\setbox1\hbox{${\scriptstyle #1}_{\scriptscriptstyle #2}$}
              \restrictionaux{#1}{#2}}
              {\setbox1\hbox{${\scriptscriptstyle #1}_{\scriptscriptstyle #2}$}
              \restrictionaux{#1}{#2}}}
\def\restrictionaux#1#2{{#1\,\smash{\vrule height .8\ht1 depth .85\dp1}}_{\,#2}} 
\theoremstyle{plain}
\newtheorem{theorem}{Theorem} [section]
\newtheorem{lemma}[theorem]{Lemma}
\newtheorem{corollary}[theorem]{Corollary}
\newtheorem{proposition}[theorem]{Proposition}
\theoremstyle{remark}
\newtheorem{remark}[theorem]{Remark}
\title[Propagation of high regularity for logNLS]{On the propagation of high regularity for the logarithmic Schrödinger equation}
\author{Quentin Chauleur}
\address{Univ. Lille, CNRS, Inria, UMR 8524 - Laboratoire Paul Painlevé, F-59000 Lille, France. }
\email{quentin.chauleur@inria.fr}
\author{Guillaume Ferriere}
\address{Univ. Lille, CNRS, Inria, UMR 8524 - Laboratoire Paul Painlevé, F-59000 Lille, France. }
\email{guillaume.ferriere@inria.fr}
\keywords{Nonlinear Schrödinger equation, propagation of regularity, instantaneous loss of regularity}
\subjclass{35Q55, 35B33, 35B65}
\begin{document}

\begin{abstract}
    We investigate both the instantaneous loss and the persistence of high regularity for the one-dimensional logarithmic Schrödinger equation in symmetric domains under various boundary conditions. We show that for a broad class of odd initial data, the~$H^s$-norm of solutions exhibits instantaneous blow-up for all~$s>\frac{7}{2}$. Conversely, we establish that~$H^3$-regularity is preserved for solutions that are odd with first-order cancellation, non-vanishing behavior away from the origin and Neumann boundary conditions on symmetric bounded domains. These theoretical results are further supported and illustrated by numerical simulations.
\end{abstract}

\maketitle

\section{Introduction}

We consider the one-dimensional logarithmic Schrödinger equation
\begin{equation} \label{logNLS} \tag{logNLS} 
  i\partial_t u + \Delta u= \lambda u \log |u|^2  
  \end{equation}
with initial condition $u(0)=\varphi$, time variable $t \in \R$ and space variable $x \in \Omega \subset \R$. We are interested both in the \textit{defocusing} case $\lambda >0$ and the \textit{focusing} case $\lambda <0$. Notably, this equation enjoys conservation of mass
\[ M(u(t)) \coloneqq \int_{\Omega} |u(t)|^2 = M(\varphi)  \]
which ensures the boundedness of the $L^2(\Omega)$-norm of the solution, as well as the conservation of energy 
\[ E(u(t)) \coloneqq \int_{\Omega} |\nabla u(t)|^2 + \lambda \int_{\Omega} |u(t)|^2 \log |u(t)|^2 = E(\varphi)  \] 
for all times $t \in \R$. \medskip

The logarithmic Schrödinger was first introduced by \textsc{Białynicki-Birula} and \textsc{Mycielski}~\cite{Birula1976} as a model for nonlinear wave
mechanics. Since then, it has found applications in various physical contexts, including nonlinear optics, nuclear physics, and quantum gravity. From a mathematical perspective, early studies were conducted by \textsc{Cazenave} and \textsc{haraux}~\cite{CazenaveHaraux1980}, followed by \cite{Cazenave1983}. This equation recently regained attention with the paper of \textsc{Guerrero}, \textsc{L\'{o}pez} and \textsc{Nieto}~\cite{Guerrero2010}, followed by the works of \textsc{Ardila}~\cite{Ardila2016}, of \textsc{d'Avenia}, \textsc{Montefusco}, \textsc{Squassina}~\cite{Avenia2014}, and of \textsc{Carles} and \textsc{Gallagher}~\cite{CarlesGallagher2018}. Over the past fifteen years, equation \eqref{logNLS} has been the subject of extensive study within the mathematical community, and many of its key properties are now well understood. For a broader overview of this topic, we refer the reader to the recent survey by \textsc{Carles}~\cite{Carles2022}. \medskip

What makes equation~\eqref{logNLS} particularly interesting is its remarkably different behavior compared to the more classical nonlinear Schrödinger equation with power-type nonlinearity~$\lambda |\psi|^{2\sigma} \psi$ for~$\sigma > 0$. Notably, equation~\eqref{logNLS} can be formally derived as the limit $\sigma \to 0$ of the power-type NLS, as discussed in \cite[Section 7]{Carles2022}. While power-type NLS equations typically exhibit \textit{scattering} in the defocusing case, meaning that solutions asymptotically behaves like a linear solution with a $t^{-\frac12}$ decay rate in one dimension, it was shown in \cite{CarlesGallagher2018} that solutions to \eqref{logNLS} decay even faster, at the rate $(t \sqrt{\lambda \log t})^{-\frac12}$,  which in fact rules out standard scattering behavior. On the other hand, the equation is globally well-posed in any dimension, and no finite-time blow-up can occur. Equation~\eqref{logNLS} also exhibits a form of scaling invariance, so that if $u$ is solution of \eqref{logNLS} with $u(0)=\varphi$,~$\kappa u e^{-2it\lambda \log \kappa}$ is also solution with initial condition $\kappa \varphi$ for any $\kappa >0$. This roughly means that the size of the initial data does not alter the dynamics of the solution, which is an uncommon feature among nonlinear dispersive PDEs. This property also induces that the flow map can't be $\mathcal{C}^1$ but at most Lipschitzean. Interestingly in the focusing case, nonlinear structures such as solitons and breathers are particularly well understood in the logarithmic setting, and the existence of multisolitons and multibreathers has been rigorously established in \cite{Ferriere2021,Ferriere2020}.  \medskip

A central theme in the study of nonlinear dispersive PDEs is the question of \textit{propagation of regularity}. For standard nonlinearities, classical energy estimates combined with Gronwall-type arguments typically guarantee that higher Sobolev regularities are propagated over time, see for instance \cite[Proposition 3.11]{Tao2006}. This means that regularity is neither destroyed nor spontaneously created (thanks to time-reversal symmetry), as long as the solution persists. This naturally leads to the question of the control of the growth of Sobolev norms, which is at the heart of the \textit{weak turbulence} theory, with seminal contributions by \textsc{Bourgain} \cite{Bourgain1996} and \textsc{Staffilani} \cite{Staffilani1997}. In the logarithmic case with defocusing nonlinearity, it is shown in \cite[Corollary 1.10]{CarlesGallagher2018} that 
\[  (\log t)^{\frac{s}{2}} \lesssim \| u(t) \|_{\dot{H}^s(\R)} \lesssim (\log t)^{\frac{s}{2}}  \]
for all $0 < s \leq 1$ and for initial data $\varphi \in H^1 \cap \enstq{f}{xf \in L^2}$. Thus, low-regularity Sobolev norms exhibit a slow, logarithmic growth in time. Note that well-posedness in low-regularity spaces $H^s$ for $0 \leq s \leq 1$ has recently been established in \cite{CarlesHayashiOzawa2024}. Finally, a straightforward argument based on Kato's trick \cite[Section 2.3]{CarlesGallagher2018}, replacing second-order spatial derivatives by first-order time derivative, can be used to propagate $H^2$ regularity. \medskip 

However, the propagation of $H^3$ regularity (and more generally, of higher Sobolev norms than $H^2$) has remained an open and challenging problem, as highlighted in \cite[Remark 2.5]{Carles2022} or \cite{CarlesHayashiOzawa2024}. This question stands as one of the last significant theoretical gaps in the analysis of the logarithmic Schrödinger equation. The difficulty lies in the limited regularity of the nonlinearity~$z \mapsto z \log |z|^2$, which introduces singularities upon spatial differentiation and prevents the direct application of standard techniques. \medskip

This article partially fills this gap, providing new insights into the persistence of regularity beyond the~$H^2$ threshold, by studying precisely the role of cancellation points in the dynamics of equation~\eqref{logNLS}. Our analysis reveals how such cancellation points critically affect the propagation of smoothness, and we present a concise argument demonstrating the instantaneous breakdown of $H^s$ regularity for $s>\frac{7}{2}$ under specific symmetry constraints. On the other hand, by precisely controlling such behavior, we also establish the first known instance of $H^3$-regularity preservation under similar assumptions.  \medskip

It is worth noting that away from vacuum, the logarithmic Schrödinger equation does not exhibit such pathological behavior: in~\cite{ChauleurFaou2022}, the authors show that solutions on the torus remain stable in high Sobolev norms over long times near plane waves. Interestingly in \cite{Birula1976}, the authors discovered that equation \eqref{logNLS} admits a rich family of smooth solutions on the whole space $\mathbb{R}^d$ with $d\geq1$, taking the form of explicit Gaussian profiles parametrized by solutions $a$ and $b$ of a set of coupled differential equations. These particular solutions write 
\[ u(t,x)=b(t)e^{-a(t) \frac{x^2}{2}}, \quad i \dot{a} - a^2 = \lambda \Re a, \quad i \dot{b} - ab = \lambda b \log |b|^2  \]
in the one dimensional setting $d=1$, and are discussed, for example, in \cite[Section 3]{Carles2022}. We also refer to~\cite{Ferriere2022}, where the equation is studied within a highly regular analytic framework.  \medskip

We further emphasize that the question of regularity propagation for equation~\eqref{logNLS} has been a recurring topic in numerical investigations, beginning with the early studies by \textsc{Bao}, \textsc{Carles}, \textsc{Su}, and \textsc{Tang} \cite{BaoCarlesSuTang2019,BaoCarlesSuTang2019bis,BaoCarlesSuTang2022} (see also \cite{AbidiGoubetMartin}). The lack of rigorous results at high spatial regularity has prevented the development of high-order numerical schemes, instead prompting interest in low-regularity methods such as in \cite{BaoMaWang2024}, or in approaches requiring enhanced time regularity~\cite{ParaschisZouraris2023}, a property that is far from guaranteed for the logarithmic Schrödinger equation. It is worth noting that these regularity issues also have significant implications in the presence of additional rough potentials, notably in the context of space-time multiplicative noise~\cite{BarbuRocknerZhang2017,CuiSun2023}, or space white noise potential as in~\cite{ChauleurMouzard2025}. \medskip

This paper is organized as follows. In Section~\ref{sec:main_results}, we state our main results and the corresponding functional setting. Section~\ref{sec:non_propagation} is devoted to the non-propagation of regularity for~$s>\frac{7}{2}$. Section~\ref{sec:_toy_model} introduces a linear toy model which captures the behavior of a first-order cancellation solution of equation~\eqref{logNLS}. Then, Section~\ref{sec:preliminiray_estimates} develops the necessary preliminary estimates used in the proof of Theorem  \ref{theo:propagation} in Section~\ref{sec:fixed_point}, which shows that~$H^3$-regularity is preserved for solutions that are odd with first-order cancellation and non-vanishing away from the origin on bounded domains. Finally, Section~\ref{sec:numerics} brings rigorous numerical simulations which illustrates and corroborates our theoretical results, and Section~\ref{sec:conclusion} summarizes our results and places them in a broader context, highlighting several related open questions. Note that the associated codes are available on the \textsc{Gitlab} page \url{https://plmlab.math.cnrs.fr/chauleur/codes/-/tree/main/propagation_regularity_logNLS}.

\section{Functional setting and main results} \label{sec:main_results}

\subsection{Notations}

Let $\Omega$ be an open subset of $\R$. We recall the standard Lebesgue spaces~$L^p(\Omega)$ and $L^{\infty}(\Omega)$ endowed with their usual norms
\[  \| f \|_{L^p(\Omega)} = \left( \int_{\Omega} |f(x)|^p \dd x  \right)^{\frac{1}{p}} \quad \text{and} \quad \| f \|_{L^{\infty}(\Omega)} = \sup_{x \in \Omega} |f(x)| \]
as well as the usual Sobolev spaces 
\[ H^k(\Omega) \coloneqq \enstq{f \in L^2(\Omega)}{ \partial_x^{\alpha} f \in L^2(\Omega) \ \text{for} \ \alpha\leq k} \]
for any integer $k \in \N$. If $\Omega=\R$, inhomogeneous and homogeneous Sobolev spaces can also be defined through the Fourier transform
\[ \widehat{f}(\xi)=\int_{\R} f(x) e^{-ix  \xi} \dd x  \]
respectively by the norms
\[  \| u \|_{H^k(\R)} = \left( \int_{\R} (1+ \xi^2)^{\frac{k}{2}} |\widehat{f}(\xi)|^2 \dd \xi  \right)^{\frac12} \quad \text{and} \quad \| u \|_{\dot{H}^k(\R)} = \left( \int_{\R} |\xi|^k |\widehat{f}(\xi)|^2 \dd \xi  \right)^{\frac12} \]
which naturally extends to the fractional Sobolev spaces $H^s(\R)$ and $\dot{H}^s(\R)$ for any real number $s \in \R$. We furthermore define $H^1_0(\Omega)$ as the closure of $\mathcal{C}^{\infty}_{c}(\Omega)$ in $H^1(\Omega)$. In particular if $\Omega$ is an open bounded subset of $\R$, we have
\[  H^1_0(\Omega) = \enstq{f \in H^1(\Omega)}{\forall x \in \partial \Omega, \ f(x)=0}. \]
Denoting the Japanese bracket $\langle x \rangle = \sqrt{1 + x^2}$, we also define the weighted spaces 
\begin{equation*}
    \mathcal{F} (H^\alpha (\Omega)) = \{ f \in L^2 (\Omega) \mid \langle x \rangle^\frac{\alpha}{2} f \in L^2 (\Omega) \}.
\end{equation*}
for any $\alpha \in \R$. For $2\pi$-periodic functions $u$, we introduce both Fourier coefficients of~$u$ (denoted $ \widehat{u}_n$ for $n \in \Z$) and periodic Sobolev spaces $H^s(\T)$ induced by the norm~$\| \cdot \|_{H^s(\T)}$ where
\[  \widehat{u}_n \coloneqq \frac{1}{2\pi} \int_{\T} u(x) e^{-in x} \dd x \quad \text{and} \quad \| u\|_{H^s(\T)}^2 \coloneqq \sum_{n \in \Z} (1+n^2)^s |\widehat{u}_n|^2. \]
Lastly, for $I \subset \R$ and interval and $X=X(\Omega)$ a Banach space, we recall Hölder spaces~$\mathcal{C}^{k,\alpha}(I; X)$ for any $k \in \N$ and $ 0 < \alpha \leq 1$, which consists of functions having continuous time derivatives up through order $k$ and such that the $k$-th partial time derivative is Hölder continuous with exponent $\alpha$, so that
\[ \| u \|_{\mathcal{C}^{k,\alpha}(I;X)} = \| u \|_{\mathcal{C}^k(I;X)} + \sup_{\substack{t,t'\in I \\ t \neq t'}} \frac{\|\partial_t^k f(t) - \partial_t^k f(t')\|_{X}}{|t-t'|^{\alpha}} < \infty.  \]
We may also use the convention $\mathcal{C}(\Omega)=\mathcal{C}^0(\Omega)$ and $\mathcal{C}(I;X)=\mathcal{C}^0(I;X)$, and denote by~$\mathcal{C}_b(\Omega)$ the space of bounded continuous functions, by $L^{\infty}_{\loc}(\Omega)$ the spaces of locally bounded functions, and by $\mathcal{C}_w ([0, T]; H^k (\Omega))$ the set of functions $u$ from $[0, T]$ with values in $H^k (\Omega)$ such that, for every $t_0 \in [0, T]$, there holds
\begin{equation*}
    u (t) \rightharpoonup u (t_0) 
    \qquad
    \text{in } H^k (\Omega)
    \text{ as } t \to t_0, t \in [0, T].
\end{equation*}

For the sake of clarity and conciseness, the norms $\|\cdot\|_{L^p}$, $\|\cdot\|_{H^s}$ and $\| \cdot \|_{\mathcal{C}^{k,\alpha}}$ and will be written without explicit reference to the domain $\Omega$ throughout the paper when the context will make it unambiguous. We also sometimes adopt the harmless slight abuse of notation $\nabla = \partial_x$ and~$\Delta=\partial_x^2$ in the one-dimensional setting.

\subsection{Background on logarithmic Schrödinger equations}

In the work of \textsc{Carles} and \textsc{Gallagher} \cite{CarlesGallagher2018}, the authors established that the Cauchy problem for \eqref{logNLS} is well posed for initial data in $H^1 (\R) \cap \mathcal{F} (H^\alpha (\R))$ for any $\alpha > 0$. Moreover, if the initial data belongs to $H^2 (\R)$, then the corresponding solution $u$ enjoys the same regularity, that is, $u \in L^\infty_{\textnormal{loc}} (\R; H^2 (\R))$.

More recently, \textsc{Hayashi} and \textsc{Ozawa} \cite{HayashiOzawa2025} refined this Cauchy theory for initial data in~$H^2$-based spaces both for $\R$ and for general domains with Dirichlet boundary conditions. In this setting, the domain of the Laplace operator is respectively defined by
\begin{equation} \label{eq:D_Lap_Dirichlet}
    D_{\R} (\Delta) \coloneqq H^2(\R) \quad \text{or} \quad D_{\Dir} (\Delta) \coloneqq H^2 (\Omega) \cap H^1_0 (\Omega).
\end{equation}
In the sequel, we adopt the convention $H^1_0 (\R) = H^1 (\R)$ if $\Omega = \R$, and generically denote by $D(\Delta)$  the domain of the Laplace operator on $\Omega$, which will be clear from context.

In \cite{HayashiOzawa2025} the authors also proved the existence and uniqueness of a solution to~\eqref{logNLS} for any initial datum in the space
\begin{equation} \label{eq:W_2_def}
    W_2 (\Omega) \coloneqq \{ \phi \in D (\Delta) \mid \phi \log{\abs{\phi}^2} \in L^2 (\Omega) \}.
\end{equation}
Moreover, they proved that $H^2$-regularity propagates on any bounded subdomain of~$\Omega$, and even globally when the equation is focusing ($\lambda < 0$). Their techniques can actually be generalized to Neumann or periodic boundary conditions, where the domain of the Laplace operator is respectively given by
\begin{equation} \label{eq:D_Lap_Neumann}
    D_{\Neu} (\Delta) \coloneqq \{ \phi \in H^2 (\Omega) \mid \restriction{\partial_n \phi}{\partial \Omega} = 0 \},
\end{equation}
or
\begin{equation} \label{eq:D_Lap_periodic}
    D_{\T} (\Delta) \coloneqq H^2 (\T).
\end{equation}
Let us point out that, when $\Omega$ is a bounded domain, there holds $W_2 (\Omega) = D (\Delta)$. Indeed, we know that
\begin{equation*}
    \abs{\phi \log{\abs{\phi}^2}} \lesssim_\delta \abs{\phi}^{1-\delta} + \abs{\phi}^{1+\delta}
\end{equation*}
for any $\delta \in (0, \frac{1}{2})$, allowing us to conclude using the embedding $L^2 (\Omega) \subset L^{2 - 2 \delta} (\Omega)$ on bounded domains, along with the Gagliardo-Nirenberg inequality for the second term.

We now gather all these Cauchy theories in the following lemma.

\begin{lemma} \label{lem:Cauchy}
    Let $\lambda \in \R \setminus \{ 0 \}$ and let $\Omega \subset \R$ be either:
    \begin{itemize}
        \item $\Omega = \R$, or an interval of $\R$ equipped with Dirichlet boundary conditions (so that $D (\Delta)$ is defined by \eqref{eq:D_Lap_Dirichlet}),
        \item an interval of $\R$ equipped with Neumann boundary conditions (so that $D (\Delta)$ is defined by \eqref{eq:D_Lap_Neumann}),
        \item $\Omega = \T$ with periodic boundary conditions (so that $D (\Delta)$ is defined by \eqref{eq:D_Lap_periodic}).
    \end{itemize}
    Then for any $\varphi \in W_2 (\Omega)$ (where $W_2 (\Omega)$ is defined in \eqref{eq:W_2_def}), there exists a unique solution
    \begin{gather*}
        u \in W^{1, \infty}_{\textnormal{loc}} (\R; L^2 (\Omega)), \quad
        \partial_t u \in \mathcal{C}_w (\R; L^2_{\textnormal{loc}} (\Omega)), \\
        \Delta u \in (\mathcal{C}_w \cap L^\infty_{\textnormal{loc}}) (\R; L^2_{\textnormal{loc}} (\Omega)),
    \end{gather*}
    to \eqref{logNLS} in the sense that
    \begin{equation} \label{eq:lognls_subdom}
        i\partial_t u + \Delta u= \lambda u \log |u|^2  \quad \text{in } L^2 (\omega)
    \end{equation}
    for all open bounded subset $\omega \subset \Omega$ and $a.e.$ $t \in \R$, with $u (0) = \varphi$. Moreover, if $\Omega$ is a bounded domain or if $\lambda > 0$, then $u \in \mathcal{C} (\R; W_2 (\Omega))$, and \eqref{eq:lognls_subdom} holds in $L^2 (\Omega)$ for all $t \in \R$.
\end{lemma}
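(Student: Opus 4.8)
The plan is to dispatch the cases $\Omega=\R$ and the Dirichlet realization directly to \cite{CarlesGallagher2018,HayashiOzawa2025}, and to obtain the Neumann and periodic cases by observing that the Hayashi--Ozawa construction depends only on a few abstract features of the problem. Writing $f(z)=z\log\abs{z}^2$ (with $f(0)=0$), these features are: (i) that $-\Delta$ with the prescribed boundary condition is a non-negative self-adjoint operator on $L^2(\Omega)$ with domain $D(\Delta)$ given by \eqref{eq:D_Lap_Dirichlet}, \eqref{eq:D_Lap_Neumann} or \eqref{eq:D_Lap_periodic}, hence generating a unitary group $(e^{it\Delta})_{t\in\R}$ that preserves $D(\Delta)$; (ii) that mass and energy are well defined on the form domain $V$, equal to $H^1_0(\Omega)$ for Dirichlet and to $H^1(\Omega)$ for Neumann and periodic conditions, with $W_2(\Omega)\subset D(\Delta)\subset V$; and (iii) the purely pointwise properties of $f$. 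Since all three hold in each of the three settings, it suffices to run the scheme once, keeping track only of where the boundary condition enters.

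First I would regularize $f$ into a globally Lipschitz nonlinearity $f_\eps$, as in \cite{CarlesGallagher2018}, so that the Duhamel formulation $u_\eps(t)=e^{it\Delta}\varphi-i\lambda\int_0^t e^{i(t-s)\Delta}f_\eps(u_\eps(s))\dd s$ admits a unique global solution $u_\eps\in\mathcal{C}(\R;D(\Delta))$ by the Banach fixed-point theorem in $\mathcal{C}([-T,T];L^2(\Omega))$ followed by a standard bootstrap onto $D(\Delta)$. Because $\Im\big(f_\eps(z)\bar z\big)=0$ pointwise, mass is conserved, and the associated regularized energy is conserved as well. These two conservation laws furnish bounds on $\norm{u_\eps(t)}_{L^2}$ and $\norm{\nabla u_\eps(t)}_{L^2}$ that are uniform in $\eps$ and, whenever $\Omega$ is bounded or $\lambda>0$, uniform in $t\in\R$ as well; here the logarithmic term in the energy is absorbed using the sign of $\lambda$ in the defocusing case, and, on a bounded domain, by the estimate $\abs{\phi\log\abs{\phi}^2}\lesssim_\delta\abs{\phi}^{1-\delta}+\abs{\phi}^{1+\delta}$ together with the Gagliardo--Nirenberg inequality already recalled before the statement.

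To propagate the $D(\Delta)$ regularity I would invoke the Kato time-derivative trick: differentiating the regularized equation in time, $w_\eps=\partial_t u_\eps$ solves a linear Schrödinger equation whose leading potential $\lambda\log\abs{u_\eps}^2$ is \emph{real}, so that this singular contribution cancels in $\Im\int(\,\cdot\,)\overline{w_\eps}$ and only a term bounded by $\abs{w_\eps}$ survives; a Gronwall argument then bounds $\norm{\partial_t u_\eps(t)}_{L^2}$ uniformly in $\eps$, the initial value $\partial_t u_\eps(0)=i(\Delta\varphi-\lambda f_\eps(\varphi))$ being uniformly bounded since $\varphi\in W_2(\Omega)$. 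As $\Delta u_\eps=\lambda f_\eps(u_\eps)-i\partial_t u_\eps$, this controls $\Delta u_\eps$ in $L^2_{\loc}$ uniformly. I would then pass to the limit $\eps\to0$ using these uniform bounds to extract a weak-$*$ limit $u$, upgrading to strong $L^2_{\loc}$ convergence via the compact embedding $H^1(\omega)\hookrightarrow\hookrightarrow L^2(\omega)$ on bounded $\omega$ in order to identify the nonlinear limit and obtain \eqref{eq:lognls_subdom}; crucially, since the Neumann (resp. periodic) condition is encoded in $D(\Delta)$, which is weakly closed, the limit again lies in $D(\Delta)$. Uniqueness comes from the Cazenave--Haraux inequality \cite{CazenaveHaraux1980}, namely $\abs{\Im\big((f(z_1)-f(z_2))\overline{(z_1-z_2)}\big)}\le C\abs{z_1-z_2}^2$, which applied to two solutions yields $\frac{\dd}{\dd t}\norm{u_1-u_2}_{L^2}^2\le C\norm{u_1-u_2}_{L^2}^2$ and hence Gronwall.

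For the final assertions, on a bounded domain one has $W_2(\Omega)=D(\Delta)$, so the uniform $D(\Delta)$ bound combined with the weak continuity and the conservation of energy upgrades $\mathcal{C}_w$ to genuine norm continuity, giving $u\in\mathcal{C}(\R;W_2(\Omega))$ and \eqref{eq:lognls_subdom} in $L^2(\Omega)$ for all $t\in\R$; when $\lambda>0$ the defocusing sign makes the $H^1$ and hence the $D(\Delta)$ bounds global even for unbounded $\Omega$, yielding the same conclusion. I expect the main obstacle to be the $\eps$-uniform $D(\Delta)$ estimate together with the preservation of the Neumann boundary condition in the limit, both of which hinge on taming the singularity of $\log\abs{u}^2$ at the zeros of $u$; this is exactly what the structural inequality and the time-derivative trick---rather than a direct spatial differentiation of the nonlinearity---are designed to circumvent.
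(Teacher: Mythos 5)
The paper offers no proof of this lemma: it is stated as a compilation of the Cauchy theories of \cite{CarlesGallagher2018} and \cite{HayashiOzawa2025}, together with the observation made just before the statement that the techniques of the latter carry over to Neumann and periodic boundary conditions. Your sketch is a faithful reconstruction of exactly those techniques (Lipschitz regularization, mass/energy conservation, Kato's time-derivative trick for the uniform $D(\Delta)$ bound, Aubin--Lions-type compactness, and Cazenave--Haraux uniqueness), so it takes essentially the same route; the only compressed step is the final upgrade from $\mathcal{C}_w$ to strong continuity in $W_2(\Omega)$, which requires establishing continuity of $t \mapsto \norm{\partial_t u(t)}_{L^2}$ (and then recovering $\Delta u$ from the equation) rather than invoking energy conservation alone, but this is carried out in the cited reference.
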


Equation \eqref{logNLS} has also been studied on $\R$ under non-vanishing boundary conditions at infinity, commonly referred to in the mathematical literature as the Gross–Pitaevskii setting, where solutions satisfy $\abs{u} \to 1$ as $\abs{x} \to \infty$. In this context, the associated energy functional $\mathcal{E}_{\textnormal{logGP}}$ and the energy space $E_\textnormal{logGP}$ are crucial tools:

 \begin{equation*}
    \mathcal{E}_{\textnormal{logGP}} (u) \coloneqq \norm{\nabla
      u}_{L^2(\R)}^2 + \lambda \int_{\R} \Bigl( |u|^2\log|u|^2-|u|^2+1 \Bigr) \dd x,
\end{equation*}
\begin{equation*}
    E_\textnormal{logGP} \coloneqq \{ v \in H^1_\textnormal{loc} (\mathbb{R}) \, | \, \mathcal{E}_\textnormal{logGP} (v) < \infty \}.
\end{equation*} 

More precisely, the Cauchy theory has recently been developed by \textsc{Carles} and \textsc{Ferriere}~\cite{Carles_Ferriere_logGP} in the defocusing case.

\begin{theorem}[\cite{Carles_Ferriere_logGP}] \label{th:Cauchy_logGP}
    Let $\lambda > 0$. For any $\varphi \in E_\textnormal{logGP}$, there exists a unique weak solution~$u \in L^\infty_\textnormal{loc} (\mathbb{R}; E_\textnormal{logGP})$ to \eqref{logNLS}. Moreover, the flow of \eqref{logNLS} enjoys the following properties:
    \begin{itemize}
        \item The energy is conserved, namely $\mathcal{E}_{\textnormal{logGP}}
          (u(t))= \mathcal{E}_{\textnormal{logGP}}(\varphi)$ for all $t\in
          \R$.
        \item We have $u - \varphi\in
        \mathcal{C} (\mathbb{R}; L^2(\R))$.
        \item If $\Delta \varphi \in L^2$, then $u - \varphi \in W^{1, \infty}_\textnormal{loc}
        (\mathbb{R}; L^2(\R)) \cap L^\infty_\textnormal{loc} (\mathbb{R};
        H^2(\R))$. 
    \end{itemize}
\end{theorem}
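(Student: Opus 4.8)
The plan is to construct solutions by regularization, to extract uniform bounds from the conservation laws, and then to exploit the monotonicity structure of the logarithmic nonlinearity for uniqueness and for the propagation of regularity. Because $|\varphi| \to 1$ at infinity, $u$ itself does not belong to $L^2(\R)$ and the natural unknown is the perturbation $u - \varphi$, which is precisely the object appearing in the second and third assertions. To produce approximate solutions I would regularize the singular nonlinearity $f(z) = z\log|z|^2$, for instance replacing $\log|z|^2$ by $\log(\eps + |z|^2)$, so that $f_\eps$ is smooth and locally Lipschitz and generates a regularized energy to which the Gross–Pitaevskii Cauchy theory applies, yielding global approximate solutions $u_\eps$ in the energy space (smoothing $\varphi$ as well if needed). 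The first key point is that the potential density $g(\rho) = \rho\log\rho - \rho + 1$ is convex, nonnegative, and minimal at $\rho = 1$; conservation of $\mathcal{E}_{\textnormal{logGP}}$ therefore bounds $\|\nabla u_\eps\|_{L^2}$ and $\int g(|u_\eps|^2)$ uniformly in $\eps$, i.e. gives a uniform bound in $L^\infty_\loc(\R; E_\textnormal{logGP})$.

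I would then pass to the limit $\eps \to 0$. Bounding $\partial_t u_\eps$ in a negative-order local norm through the equation, together with the spatial $H^1_\loc$ bound, an Aubin–Lions argument yields strong convergence in $L^2_\loc$ and almost-everywhere convergence along a subsequence. The delicate step is the nonlinear term: using the pointwise estimate $|z\log|z|^2| \lesssim_\delta |z|^{1-\delta} + |z|^{1+\delta}$ and the one-dimensional embedding $H^1_\loc \hookrightarrow L^\infty_\loc$ to obtain local equi-integrability, almost-everywhere convergence upgrades to convergence of $f_\eps(u_\eps)$ in $L^2_\loc$, so that the limit $u \in L^\infty_\loc(\R; E_\textnormal{logGP})$ solves \eqref{logNLS}. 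Weak lower semicontinuity gives $\mathcal{E}_{\textnormal{logGP}}(u(t)) \le \mathcal{E}_{\textnormal{logGP}}(\varphi)$, and the time-reversibility of \eqref{logNLS} (the map $(t,x) \mapsto \overline{u(-t,x)}$ is again a solution) yields the reverse inequality once uniqueness is known, hence conservation of the energy. Combined with the weak continuity in time coming from the equation, continuity of the conserved energy then upgrades to strong continuity, giving $u - \varphi \in \mathcal{C}(\R; L^2)$.

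Uniqueness rests on the Cazenave–Haraux monotonicity inequality
\[
  \bigl| \Im\bigl[ (f(z_1) - f(z_2))\,\overline{(z_1 - z_2)} \bigr] \bigr| \le C\, |z_1 - z_2|^2 .
\]
If $u_1, u_2$ are two solutions, their difference $w = u_1 - u_2$ lies in $L^2$ and satisfies a Schrödinger equation with source $\lambda(f(u_1) - f(u_2))$; multiplying by $\bar w$ and taking the imaginary part makes the Laplacian term vanish after integration by parts, leaving $\tfrac12 \tfrac{d}{dt}\|w\|_{L^2}^2 \le C|\lambda|\,\|w\|_{L^2}^2$, so Gronwall's lemma forces $w \equiv 0$. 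Only minor care is required to justify this computation at the available regularity, for instance by a density or regularization argument.

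For the last assertion, assuming $\Delta\varphi \in L^2$, I would propagate $H^2$-regularity by the Kato trick, trading the second spatial derivative for a first time derivative. Differentiating \eqref{logNLS} in time, $w = \partial_t u$ solves a Schrödinger equation with source $\lambda\,\partial_t(u\log|u|^2) = \lambda\bigl( (\log|u|^2)\,w + \tfrac{2u}{|u|^2}\Re(\bar u\, w) \bigr)$; the apparently singular second term, which carries the factor $1/|u|^2$, is annihilated when taking the imaginary part against $\bar w$, and what remains is bounded by $2|w|^2$ with no dependence on $1/|u|^2$. Gronwall's lemma then propagates $\partial_t u \in L^\infty_\loc(\R; L^2)$ from its initial value $\partial_t u(0) = i(\Delta\varphi - \lambda\varphi\log|\varphi|^2) \in L^2$, and rewriting the equation as $\Delta u = i\partial_t u - \lambda u\log|u|^2$ returns $\Delta u \in L^\infty_\loc(\R; L^2)$, hence $u - \varphi \in L^\infty_\loc(\R; H^2)$. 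The main obstacle throughout is the lack of Lipschitz regularity of the nonlinearity at its zeros; the observation that resolves it is that in every energy estimate the singular factor $1/|u|^2$ appears only inside an imaginary part, where a gradient-type cancellation removes it.
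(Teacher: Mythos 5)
This theorem is imported from \cite{Carles_Ferriere_logGP} and the paper offers no proof of its own (it is stated purely as a citation), so there is no internal argument to compare against. Your sketch reproduces the standard strategy of that reference essentially correctly --- regularization of the logarithm plus the convexity of $\rho \mapsto \rho\log\rho - \rho + 1$ and energy conservation for uniform bounds and compactness, the Cazenave--Haraux monotonicity inequality with Gronwall for uniqueness and the $L^2$-continuity of $u - \varphi$, and Kato's trick with the cancellation of the $1/|u|^2$ factor under the imaginary part for $H^2$ propagation --- the only step deserving more than the ``minor care'' you allot being that both the uniqueness computation and the time-differentiation must be justified at the level of the regularized (or time-differenced) solutions before passing to the limit.
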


We conclude this section with a classical result for solutions to nonlinear Schrödinger equations (namely, the preservation of the oddness property) which we include here for completeness.

\begin{lemma} \label{lemma_odd}
Let $\varphi \in W_2(\Omega)$ (resp. $\varphi \in E_{\text{logGP}}$ and $\partial_x \varphi \in H^1(\Omega)$) be an odd function, and let~$u$ be the unique solution to equation~\eqref{logNLS} provided by the Cauchy theory in Lemma~\ref{lem:Cauchy} (resp. by Theorem \ref{th:Cauchy_logGP}). Then $u(t)$ remains odd for all $t \in \mathbb{R}$.
\end{lemma}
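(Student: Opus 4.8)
The plan is to exploit the reflection symmetry shared by the Laplacian and by the gauge-invariant nonlinearity, combined with the uniqueness already granted by Lemma~\ref{lem:Cauchy} (resp. Theorem~\ref{th:Cauchy_logGP}). Since oddness is only meaningful on a domain symmetric about the origin, I work throughout under the (harmless) standing assumption $\Omega = -\Omega$, which covers $\R$, $\T$, and symmetric intervals. I introduce the reflected function $v(t, x) \coloneqq -u(t, -x)$ and first record that it carries the same initial datum: $v(0, x) = -u(0, -x) = -\varphi(-x) = \varphi(x)$, precisely because $\varphi$ is odd.

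Next I would verify that $v$ solves the same equation. Differentiating, $\partial_x v(t,x) = (\partial_x u)(t, -x)$ and hence $\Delta v(t, x) = -(\Delta u)(t, -x)$, so the even-order operator $\Delta$ intertwines with the reflection up to the overall sign. For the nonlinearity, the crucial structural fact is that $z \mapsto z \log \abs{z}^2$ is of the gauge-invariant form $g(\abs{z}^2)\, z$ with $g$ real; since $\abs{-u(t,-x)}^2 = \abs{u(t,-x)}^2$, this gives $v \log \abs{v}^2 = -\,(u \log \abs{u}^2)(t, -x)$. Evaluating \eqref{logNLS} for $u$ at the point $(t, -x)$ and multiplying the identity by $-1$ then yields $i \partial_t v + \Delta v = \lambda\, v \log \abs{v}^2$, so $v$ is again a solution. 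As reflection maps each bounded $\omega \subset \Omega$ to the bounded set $-\omega \subset \Omega$, the equation holds for $v$ in $L^2(\omega)$ for all such $\omega$ (resp. in the weak sense required by Theorem~\ref{th:Cauchy_logGP}).

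It then remains to check that $v$ is an admissible competitor for the uniqueness statement, i.e. that it lies in the same class as $u$. Reflection is a linear isometry of every space appearing in the Cauchy theory — $L^2$, $H^k$, $W_2(\Omega)$, and $E_\textnormal{logGP}$ — and, because $\partial \Omega = -\partial \Omega$, it preserves the Dirichlet, Neumann and periodic boundary conditions: for instance $\restriction{\partial_n v}{\partial \Omega} = 0$ whenever $\restriction{\partial_n u}{\partial \Omega} = 0$. Hence $v$ belongs to the same regularity class and to the same domain $D(\Delta)$ as $u$. The uniqueness in Lemma~\ref{lem:Cauchy} (resp. Theorem~\ref{th:Cauchy_logGP}) then forces $v = u$, that is $u(t, -x) = -u(t, x)$ for all $t \in \R$, which is exactly the oddness of $u(t)$.

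I expect the algebraic verification that $v$ solves the PDE to be entirely routine; the only point demanding genuine care is the compatibility bookkeeping of the last paragraph — the invariance of the functional spaces and, above all, of the boundary conditions under reflection — which is where the symmetry hypothesis $\Omega = -\Omega$ is indispensable.
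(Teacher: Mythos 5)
Your proposal is correct and follows exactly the paper's argument: the paper likewise observes that $(t,x) \mapsto -u(t,-x)$ is a solution with the same initial datum (by oddness of $\varphi$) and concludes by uniqueness. Your version merely spells out the routine verifications (the sign bookkeeping for $\Delta$, the gauge-invariant form of the nonlinearity, and the reflection-invariance of the function spaces and boundary conditions) that the paper leaves implicit.
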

\begin{proof}
    The proof is classical: if $(t,x) \mapsto u(t,x)$ is a solution to equation~\eqref{logNLS}, then $(t,x) \mapsto -u(t,-x)$ is also a solution with the same initial condition $\varphi$, since $\varphi(-x) = -\varphi(x)$ for all $x \in \Omega$. By uniqueness, it follows that~$u(t,x) = -u(t,-x)$ for all $x \in \Omega$, which gives the result.
\end{proof}

 \subsection{Main results}

We begin by considering the case of very high regularities. Due to the non-smoothness of the nonlinearity at zero, one expects a critical threshold beyond which regularity can no longer be propagated if the solution vanishes.
However, precisely identifying the loss of regularity is delicate, as the cancellation point may evolve in time or even be punctual in time, which may lead to a smaller loss of regularity, if any.

A natural way to track down such a cancellation is to impose symmetry on the solution. For instance, assuming the solution is odd ensures that $x = 0$ is a fixed cancellation point, allowing a focused analysis of the regularity near this point. To this end, we introduce the space of odd functions with non-vanishing derivative at the origin, namely
\begin{equation*}
    \mathfrak{D} (\Omega) = \{ \phi \in  H^1_{\textnormal{loc}} (\Omega) \mid \phi \text{ odd},\ \partial_x \phi (0) \neq 0 \}.
\end{equation*}

Our first result shows that, for initial data in $\mathfrak{D} (\Omega)$, the critical threshold for regularity propagation lies below $s = \frac{7}{2}$.

\begin{theorem} \label{th:non-prop_regularity}
    Let $\lambda \in \mathbb{R} \setminus \{ 0 \}$ and take domain $\Omega$ and initial data $\varphi$ such that either:
    \begin{itemize}
        \item $\Omega = \mathbb{R}$ and $\varphi \in W_2 (\R) \cap \mathfrak{D} (\R)$,
        \item $\Omega = (-a, a)$ for some $a > 0$ and $\varphi \in W_2 (\Omega) \cap \mathfrak{D} (\Omega)$ with adequate boundary conditions (such as Neumann, Dirichlet or periodic boundary conditions),
        \item $\Omega = \mathbb{R}$, $\lambda > 0$ and $\varphi \in E_\textnormal{logGP} \cap \mathfrak{D} (\R)$.
    \end{itemize}
    Let $s > \frac{7}{2}$ and assume furthermore that $\varphi$ satisfies $\varphi' \in H^{s-1} (\Omega)$. Let $u$ be the unique solution to \eqref{logNLS} given by the previous Cauchy theory of Lemma \ref{lem:Cauchy} or Theorem \ref{th:Cauchy_logGP} with initial data~$u(0)=\varphi$. Then, for all~$T > 0$ and~$x_0 > 0$, we have $u \notin L^\infty ((0, T); H^s ((-x_0, x_0)))$.
\end{theorem}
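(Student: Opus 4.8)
The plan is to argue by contradiction, exploiting that oddness pins a cancellation at $x=0$ for all times (Lemma~\ref{lemma_odd}), so that the logarithmic nonlinearity forces a logarithmic singularity in $\partial_x u$ that is incompatible with $\mathcal{C}^3$ regularity. Suppose $u \in L^\infty((0,T); H^s((-x_0,x_0)))$ for some $T, x_0 > 0$ and $s > \frac72$. Interpolating this bound with the continuity of the differences $u(t)-u(t')$ in $L^2$ (from Lemma~\ref{lem:Cauchy} or Theorem~\ref{th:Cauchy_logGP}), I would first upgrade to $u \in \mathcal{C}([0,T]; H^{s'})$ for any $s' \in (\frac72, s)$, whence $u \in \mathcal{C}([0,T]; \mathcal{C}^3)$ by Sobolev embedding since $s' > \frac72$. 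In particular $\sup_{t}\|u(t)\|_{\mathcal{C}^3} < \infty$, the coefficient $c(t) := \partial_x u(t,0)$ is continuous with $c(0) = \varphi'(0) \neq 0$, and by oddness $u(t,0)=0$ with $\partial_x u(t,\cdot)$ even.

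Next I would isolate the singular structure of the nonlinearity near the origin. Writing $u(t,x) = x\, h(t,x)$ with $h$ even and of class $\mathcal{C}^2$ in $x$ (as $u(t,0)=0$ and $u(t,\cdot)\in\mathcal{C}^3$), one has $h(t,0)=c(t)\neq 0$, so $h(t,\cdot)$ stays bounded away from $0$ on a fixed interval $(-x_1,x_1)$, uniformly for $t$ in a small interval $[0,t_1]$. Then $\log|u|^2 = 2\log|x| + \log|h|^2$ with $\log|h|^2$ bounded and $\mathcal{C}^2$, and a direct computation of $\partial_x N(u) = \lambda\partial_x(u\log|u|^2)$ should give
\[
\partial_x N(u)(t,x) = 2\lambda\, c(t)\log|x| + b(t,x),
\]
where the remainder $b$ collects the contributions $\partial_x u\,\log|h|^2$, $u\,\partial_x\log|u|^2$ and $2\lambda(\partial_x u - c(t))\log|x|$, all of which I expect to be bounded on $[0,t_1]\times(-x_1,x_1)$ uniformly in $t$, thanks to the uniform $\mathcal{C}^3$ bound and the lower bound on $\abs{h}$ (note $\partial_x u - c(t) = O(x^2)$ since $\partial_x u$ is even and $\mathcal{C}^2$).

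The heart of the argument is then a time integration followed by one spatial derivative. Since $\partial_t u = i\partial_x^2 u - iN(u)$ holds in $L^2_{\textnormal{loc}}$ with $\partial_t u \in L^\infty_{\textnormal{loc}}(\R; L^2)$, integrating in time and differentiating once in $x$ yields, for $x \in (-x_1,x_1)\setminus\{0\}$,
\[
\partial_x u(t,x) - \varphi'(x) = i\int_0^t \partial_x^3 u(\tau,x)\dd\tau - i\int_0^t \partial_x N(u)(\tau,x)\dd\tau.
\]
On the left, $\partial_x u(t,\cdot)-\varphi'$ is bounded near $0$ because $u(t), \varphi \in \mathcal{C}^1$. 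The first term on the right is bounded by $T\sup_\tau\|\partial_x^3 u(\tau)\|_{L^\infty}$, and this is exactly where $s > \frac72$ is essential: it guarantees $u \in \mathcal{C}^3$, so that $\partial_x^3 u$ is bounded and \emph{cannot} absorb the logarithm (for $s\le \frac72$ the term $\partial_x^3 u$ could itself diverge and cancel it). Inserting the expansion of $\partial_x N(u)$, the last term equals $-2i\lambda\bigl(\int_0^t c(\tau)\dd\tau\bigr)\log|x|$ plus a bounded contribution. Choosing $t\in(0,t_1)$ small enough that $\int_0^t c(\tau)\dd\tau \neq 0$ (possible since $c$ is continuous with $c(0)\neq 0$), the right-hand side diverges like $\log|x|$ as $x\to 0$ while the left-hand side stays bounded, a contradiction; this forces $u \notin L^\infty((0,T); H^s((-x_0,x_0)))$.

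The main obstacle I anticipate is the rigorous, uniform-in-$\tau$ control of the remainder $b(\tau,x)$ and of $\partial_x^3 u(\tau,\cdot)$ near the origin, together with the justification that the time-integrated identity may be differentiated in $x$ and evaluated pointwise for $x\neq 0$. The continuity $u\in\mathcal{C}([0,T];\mathcal{C}^3)$ and the resulting compactness of $\{u(t)\}_{t\in[0,t_1]}$ in $\mathcal{C}^3$ should supply the uniform bounds, while the lower bound $\abs{h}\geq\delta$ near $0$ controls the terms involving $\log\abs{h}^2$. Finally, the three settings of the statement differ only through the Cauchy theory invoked and the functional framework for $\partial_t u$, but since the computation is purely local near $x=0$, the boundary conditions at $\pm a$ play no role.
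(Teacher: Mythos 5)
Your proposal is correct and follows essentially the same route as the paper's proof: argue by contradiction, upgrade $L^\infty_t H^s$ to $\mathcal{C}([0,T];\mathcal{C}^3)$ near the origin, use oddness to pin the first-order zero at $x=0$ with $\partial_x u(t,0)\neq 0$ for short times, differentiate the equation in $x$ away from the origin, integrate in time, and observe that the term coming from $\partial_x u\,\log|u|^2$ diverges like $\log|x|$ with a nonzero coefficient $\int_0^t \partial_x u(\tau,0)\,\mathrm{d}\tau$ while all other terms stay bounded. The only cosmetic difference is that you extract the $2\lambda\, c(t)\log|x|$ singularity at the level of the integrand, whereas the paper factors out $2\log{x}\int_0^t \partial_x u(\tau,x)\,\mathrm{d}\tau$ after the time integration and passes to the limit $x\to 0$; both yield the same contradiction.
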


The proof, detailed in Section \ref{sec:non_propagation}, relies on several key steps.
First, we observe that the solution remains in~$\mathfrak{D} (\Omega)$, at least for  short times, by symmetry properties of the Schrödinger flow (in particular using Lemma \ref{lemma_odd}). Then, if the solution is also $H^s$ locally near the origin for a short time, all the terms in~\eqref{logNLS} are actually $\mathcal{C}^1$ close to the origin except at the precise cancellation point $x=0$.
This allows to differentiate \eqref{logNLS} with respect to space for any $x\neq0$ and to integrate it in time, yielding an expression for~$\partial_x u (t, x)$ for $x \neq 0$ in a neighborhood of the origin. Last, we show that this expression actually diverges like~$\log{\abs{x}}$ when~$x \to 0$, proving that~$\partial_x u (t)$ is not continuous, which is in contradiction with the assumption~$\partial_x u (t) \in H^{s-1}$ since $s-1 > \frac{5}{2}$.

\begin{remark}
   Regarding the assumptions in Theorem \ref{th:non-prop_regularity}, any boundary condition on~$\Omega = (-a, a)$ that ensures a well-posed Cauchy problem and preserves the oddness of solutions would also be suitable.
\end{remark}

\begin{remark}
    We believe that the additional assumption $\partial_x \phi (0) \neq 0$ is purely technical. It is expected that nonzero initial data $\varphi$ satisfying $\partial_x \varphi (0) = 0$ would in general lead to a solution $u$ such that $u (t) \in \mathfrak{D}$ in small times $t > 0$. See the discussion in Section~\ref{sec:conclusion} for further details.
\end{remark}

We now restrict our attention to the case of a bounded domain $\Omega=(-a , a )$ for some~$a>0$, and we add the Neumann boundary condition $\partial_x u (a)=\partial_x u (-a) = 0$ to equation~\eqref{logNLS}. Assuming $\varphi \in D_{\Neu}(\Delta)$ (where $D_{\Neu}(\Delta)$ is defined in \eqref{eq:D_Lap_Neumann}), we know from Lemma~\ref{lem:Cauchy} that there exists a unique solution $u \in \mathcal{C}(\R; D_{\Neu}(\Delta)) \cap \mathcal{C}^{0,\frac12}(\R;L^2(\Omega))$ of equation \eqref{logNLS} such that~$u(0)=\varphi$. We introduce the space of odd functions with first-order cancellation which do not vanish outside the origin
\[ \mathcal{D} \coloneqq \enstq{\phi \in D_{\Neu} (\Delta)}{\phi \ \text{odd}, \ \phi(x) \neq 0 \ \text{for all } x \in [-a, a] \setminus \{ 0 \}, \, \partial_x \phi (0) \neq 0}.  \]
We recall that $H^1 (\Omega) \subset \mathcal{C} ([-a, a]; \C)$, so that $\phi (\pm a)$ is well-defined and the second requirement in the definition of $\mathcal{D}$ makes sense.
Like for the previous space $\mathfrak{D}$, by continuity we get the existence of a time $T>0$ such that for all $t \in (-T,T)$, $u(t) \in \mathcal{D}$ if~$\varphi \in \mathcal{D}$ (using once again Lemma \ref{lemma_odd}). Note that, since $\mathcal{D} \subset \mathfrak{D} (\Omega)$, Theorem~\ref{th:non-prop_regularity} is also valid in this setting. However, to the best of the authors' knowledge, nothing is known concerning the (non-)propagation of $H^s$ regularity for $2 < s \leq \frac{7}{2}$.

Our second main result states that, at least for short times, the $H^3$ regularity is propagated if the initial data is in $\mathcal{D}$.

\begin{theorem} \label{theo:propagation}
Let $\varphi \in \mathcal{D} \cap H^3(\Omega) $ in $\Omega=(-a , a )$ for some~$a>0$. Let $u$ be the unique solution to \eqref{logNLS} given by the previous Cauchy theory of Lemma \ref{lem:Cauchy} with Neumann boundary conditions and initial data~$u(0)=\varphi$. Then there exists a time $T>0$ such that~$u(t) \in \mathcal{D} \cap H^3(\Omega) $ for all $t \in (-T,T)$.
\end{theorem}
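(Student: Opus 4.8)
The plan is to propagate $H^3$ regularity by a Kato-type trick that trades two spatial derivatives for one time derivative — exactly the argument recalled in the introduction for $H^2$, but pushed one order higher. Writing $N(u) := u\log\abs{u}^2$, the equation reads $\Delta u = -i\partial_t u + \lambda N(u)$, so that
\begin{equation*}
    \partial_x^3 u = \partial_x \Delta u = -i\,\partial_x\partial_t u + \lambda\,\partial_x N(u).
\end{equation*}
A direct computation gives $\partial_x N(u) = \partial_x u\,\log\abs{u}^2 + 2\frac{u}{\abs{u}}\partial_x\abs{u}$. On $\Omega=(-a,a)$ with $u(t)\in\mathcal{D}$, the first term behaves like $\partial_x u(t,0)\,\log\abs{x}$ near the origin and hence lies in $L^2$, while the second is bounded (its one-sided limits at $0$ coincide, both equal to $\partial_x u(t,0)$, because of the first-order cancellation); near $\pm a$ the Neumann condition and the non-vanishing of $u$ keep everything bounded. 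Thus $\partial_x N(u)\in L^2$, and the identity above shows that $u(t)\in H^3$ if and only if $\partial_t u(t)\in H^1$. The whole problem therefore reduces to propagating $H^1$ regularity for $w:=\partial_t u$.

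Next I would derive the linear Schrödinger equation satisfied by $w$. Differentiating the equation in time and using $\partial_t\abs{u}^2=2\Re(\bar u\,\partial_t u)$ yields
\begin{equation*}
    i\partial_t w + \Delta w = \lambda\Bigl[(\log\abs{u}^2 + 1)\,w + \frac{u^2}{\abs{u}^2}\,\bar w\Bigr],
\end{equation*}
a linear equation for $w$ with a real but logarithmically singular potential $\lambda(\log\abs{u}^2+1)$ and a bounded conjugate-coupling coefficient $u^2/\abs{u}^2$ of modulus one. Since $u(t)$ is odd with $c(t):=\partial_x u(t,0)\neq0$, the function $w$ is again odd with a first-order zero at the origin, $w(t,x)\sim\dot c(t)\,x$. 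This is precisely the configuration isolated by the linear toy model of Section~\ref{sec:_toy_model}: a Schrödinger equation with potential $\sim2\lambda\log\abs{x}$ acting on odd, first-order-vanishing data.

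The heart of the matter is the $H^1$ energy estimate for $w$. For the $L^2$ part the potential term is self-adjoint and the conjugate term has modulus-one coefficient, so $\norm{w}_{L^2}$ is controlled by Gronwall. For $\norm{\partial_x w}_{L^2}$, after an integration by parts the top-order contribution of the singular potential is $\Im\!\int\lambda(\log\abs{u}^2+1)\abs{\partial_x w}^2$, whose integrand is real and therefore vanishes: the $\log$ singularity drops out. The remaining terms involve $\partial_x\log\abs{u}^2=2\Re(\bar u\,\partial_x u)/\abs{u}^2\sim1/x$ and $\partial_x(u^2/\abs{u}^2)=2i\,\partial_x(\arg u)\,u^2/\abs{u}^2$; the first is multiplied by $w\sim\dot c\,x$, making the product bounded, and the second is bounded because $\partial_x\arg u=\Im(\partial_x u/u)$ stays finite at the origin (to leading order $\partial_x u/u\sim1/x$ is real). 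Thus every surviving contribution has a bounded coefficient, the estimate closes by Gronwall, and $w\in L^\infty((-T,T);H^1)$, i.e. $u\in L^\infty((-T,T);H^3)$. The Neumann boundary conditions ensure that the boundary terms produced by the integrations by parts vanish.

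To make these manipulations rigorous one cannot work directly with the $H^2$ solution of Lemma~\ref{lem:Cauchy}, whose third derivative is only borderline $L^2$ and whose potential is genuinely singular. I would instead run a fixed-point/Picard scheme, constructing a solution in a complete metric space of odd $H^3$ functions with a first-order zero at the origin and quantitative lower bounds $\abs{u(t,x)}\gtrsim\abs{x}$ near $0$ and $\abs{u(t,x)}\gtrsim1$ away from it, uniform on a short interval $(-T,T)$ (these bounds being the preliminary estimates of Section~\ref{sec:preliminiray_estimates}). On such a space the nonlinear map is well-defined and contractive for small $T$, while the a priori estimate above keeps the iterates bounded in $H^3$; uniqueness in Lemma~\ref{lem:Cauchy} then identifies the limit with $u$. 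I expect the main obstacle to be precisely the closure of the $H^1$ estimate for $w$: one must simultaneously maintain the quantitative lower bounds that keep $\log\abs{u}^2$ and $1/\abs{u}^2$ under control and exploit the exact algebraic cancellations coming from oddness and first-order vanishing, since it is only this interplay that prevents the $1/x$ and $1/x^2$ singularities from destroying $H^3$ — consistently with Theorem~\ref{th:non-prop_regularity}, which shows that at higher regularity the very same mechanism genuinely fails.
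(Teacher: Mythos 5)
Your route is genuinely different from the paper's: you trade two space derivatives for one time derivative (Kato's trick) and try to close an $H^1$ energy estimate for $w=\partial_t u$, whereas the paper works with $v=\partial_x u$, absorbs the singular part of the potential into a self-adjoint operator $A=-\Delta+\lambda\log\abs{\chi}^2$ via the identity $u=x\,\mathcal{V}[\partial_x u]$, and runs a Duhamel fixed point for $v$ in $H^1_0\cap H^2$. The reduction ``$u(t)\in H^3$ iff $\partial_t u(t)\in H^1$'' is sound for $u(t)\in\mathcal{D}\cap H^2$, and the derived equation for $w$ is correct. However, the central step of your energy estimate has a circular justification. You control the worst term $(\partial_x\log\abs{u}^2)\,w\sim x^{-1}w$ by asserting $w(t,x)\sim\dot c(t)\,x$; but first-order vanishing of $w$ at the origin requires $w(t)$ to be (at least) Lipschitz there, i.e.\ essentially $\partial_t u\in H^{3/2+}$, which is more than the $H^1$ bound you are propagating. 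At the regularity actually available, $w\in H^1$ only gives $\abs{w(x)}\lesssim\norm{w}_{H^1}\abs{x}^{1/2}$, and the pointwise product with $x^{-1}$ is unbounded. The step can be repaired: since $w$ is odd, $w(0)=0$ and the one-dimensional Hardy inequality gives $\norm{w/x}_{L^2}\leq 2\norm{\partial_x w}_{L^2}$, while $\abs{x\,\partial_x\log\abs{u}^2}$ is bounded under the quantitative $\mathcal{D}$-bounds ($\abs{u}\gtrsim\abs{x}$ near $0$, $\abs{u}\gtrsim 1$ elsewhere); this is the missing ingredient, and the same remark applies to your claim that $\partial_x\arg u$ is ``finite at the origin,'' which at $H^2$ regularity is only $O(\abs{x}^{-1/2})$.

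The second, larger gap is that the rigorous construction is only sketched, and it is exactly there that the difficulty of the theorem lies. A Picard iteration ``in a space of odd $H^3$ functions with quantitative lower bounds'' requires specifying the iteration map and proving it is a contraction; the nonlinearity $z\mapsto z\log\abs{z}^2$ is not Lipschitz near vacuum, and the paper explicitly observes (Remark~\ref{rem:not_regu_comp}) that the obvious regularization $\log(\abs{u}^2+\epsilon)$ destroys the structure and does not yield $\epsilon$-uniform $H^3$ bounds. The paper's whole apparatus in Sections~\ref{sec:_toy_model}--\ref{sec:preliminiray_estimates} (the operator $A$, the map $\mathcal{V}$, and the Lipschitz estimates on the $V_j$ on the set $\{\abs{\mathcal{V}[f]}\geq\alpha\}$) exists precisely to produce a well-defined contraction at the top regularity. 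So while your a priori estimate is salvageable via Hardy's inequality and the approach looks viable in principle, as written the proof is incomplete at both its key cancellation step and its construction step.
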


The proof of this statement proceeds in three steps:
\begin{itemize}
	\item We begin by introducing in Section \ref{sec:_toy_model} a linear toy model that captures the local-in-time behavior of the system near cancellation points.
	\item Next, we establish preliminary estimates for products of functions in Sobolev spaces, along with properties and representations of odd functions in $\mathcal{D}\cap H^3(\Omega)$ in Section~\ref{sec:preliminiray_estimates}.
	\item Finally, we gather these arguments in Section \ref{sec:fixed_point} to carry out a detailed fixed point argument based on a derived formulation of the nonlinear equation \eqref{logNLS}.
\end{itemize}

Actually, we even prove a slightly better result in Proposition \ref{prop:main_result_propagation}, showing that the $H^3$ regularity can explode \textit{only if} $\inf_\Omega \abs{\mathcal{V} [\partial_x u (t)]} \to 0$, where
\begin{equation*}
    \mathcal{V} [f] (x) = \int_0^1 f(\sigma x) \dd \sigma, \qquad \forall x \in \Omega.
\end{equation*}
Furthermore, we believe that this characterization is not completely optimal, and that the~$H^3$ regularity might be propagated beyond this point. However, this might require further examinations and probably more refined techniques.

\begin{remark}
    A classical way to address the non-Lipschitz nature of the logarithmic nonlinearity is to introduce a small regularization parameter $\epsilon > 0$, and replace it by the saturated version
$z \mapsto z \log(|z|^2+\epsilon)$ which is Lipschitz continuous (see for instance \cite{CarlesGallagher2018}). Since the resulting equation is mass-subcritical, it is generally expected that high Sobolev regularity can be propagated, at least for short times, but obviously not uniformly in $\epsilon$ (in view of Theorem \ref{th:non-prop_regularity}). A natural strategy would then be to prove that, under the assumptions of Theorem~\ref{theo:propagation}, the propagation of $H^3$ regularity is uniform in $\epsilon$, thereby recovering the same result in the limit~$\epsilon \rightarrow 0$ for the original problem.

    However, this approach appears to be ineffective. The regularization destroys key algebraic properties of the logarithmic nonlinearity that are central to our analysis (see Remark~\ref{rem:not_regu_comp} in Section~\ref{sec:fixed_point} for more details). While one could attempt to take into account these algebraic features in the regularizing process, we find that the $H^3$ norm of the solution seems to diverge as~$\epsilon \to 0$. This behavior, although perhaps surprising at first, suggests that regularization may obscure the true dynamics of the logarithmic Schrödinger equation. Proper numerical simulations provide further evidence for this theoretical observation and reinforce the relevance of our approach based on the integrated quantity $\mathcal{V}[f]$.
\end{remark}

\section{Non-propagation of the $H^s$ regularity} \label{sec:non_propagation}

In this section, we prove Theorem \ref{th:non-prop_regularity} by contradiction, working under the assumptions on the domain $\Omega$ and the initial data $\varphi$ as stated in the theorem. The core idea is to show that, under the assumption of additional $H^s$ regularity on some space interval $I_{x_0} \coloneqq (- x_0, x_0)$, the function $(t,x) \mapsto \partial_x u (t,x)$ must coincide with an expression that diverges like $\log{\abs{x}}$ as $x \to 0$. To reach this conclusion, we begin by establishing a series of auxiliary lemmas describing the behavior of $u$ near the origin.

Prior to that, we provide the following result, whose proof is classical using interpolation between $L^2$ and $H^s$.

\begin{lemma} \label{lem:regularity}
    For any interval $I \subset \R$, for any $T > 0$ and for all $\theta \in (0, 1)$, there holds $\mathcal{C}^{0, 1-\theta} ([0, T]; H^{s \theta} (I)) \subset W^{1, \infty} ([0, T]; L^2 (I)) \cap L^\infty ((0, T); H^s (I))$.
\end{lemma}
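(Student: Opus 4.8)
The plan is to use the standard interpolation inequality between $L^2$ and $H^s$, applied not to $u$ itself but to its increments in time. The inequality in question reads
\[ \|f\|_{H^{s\theta}(I)} \le C\,\|f\|_{L^2(I)}^{1-\theta}\,\|f\|_{H^s(I)}^{\theta}, \qquad \theta\in(0,1), \]
for every $f\in H^s(I)$, and the whole argument consists in feeding into it the Lipschitz-in-time control with values in $L^2(I)$ together with the uniform bound in $H^s(I)$, so as to recover the Hölder-$(1-\theta)$ modulus of continuity in $H^{s\theta}(I)$. In other words, the inclusion amounts to the embedding estimate $\|u\|_{\mathcal{C}^{0,1-\theta}([0,T];H^{s\theta}(I))}\lesssim \|\partial_t u\|_{L^\infty([0,T];L^2(I))}^{1-\theta}\|u\|_{L^\infty((0,T);H^s(I))}^{\theta}+\|u\|_{L^\infty((0,T);H^{s\theta}(I))}$, which I would establish as follows.

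First I would prove the interpolation inequality on the interval $I$. On the whole line it is immediate from the Fourier representation: since $(1+\xi^2)^{s\theta}=\big[(1+\xi^2)^{s}\big]^{\theta}$, Hölder's inequality with conjugate exponents $1/\theta$ and $1/(1-\theta)$ applied to $\int_{\R}(1+\xi^2)^{s\theta}|\widehat f(\xi)|^2\,\dd\xi$ gives the bound with constant $C=1$. For a general interval $I$ I would first extend $f$ through a linear extension operator $E\colon H^s(I)\to H^s(\R)$ that is simultaneously bounded from $L^2(I)$ into $L^2(\R)$, apply the line inequality to $Ef$, and restrict; this yields the inequality on $I$ with a constant $C=C(I,s)$.

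Then I would read off the two increment bounds. From the Lipschitz information, the vector-valued fundamental theorem of calculus gives $\|u(t)-u(t')\|_{L^2(I)}\le\|\partial_t u\|_{L^\infty([0,T];L^2(I))}\,|t-t'|$, while the uniform $H^s$ bound gives $\|u(t)-u(t')\|_{H^s(I)}\le 2\,\|u\|_{L^\infty((0,T);H^s(I))}$. Plugging $f=u(t)-u(t')$ into the interpolation inequality then produces
\[ \|u(t)-u(t')\|_{H^{s\theta}(I)}\le C\,\big(\|\partial_t u\|_{L^\infty L^2}\,|t-t'|\big)^{1-\theta}\big(2\|u\|_{L^\infty H^s}\big)^{\theta}\lesssim|t-t'|^{1-\theta}, \]
which is exactly the Hölder-$(1-\theta)$ estimate defining the target class; continuity of $t\mapsto u(t)$ in $H^{s\theta}(I)$ follows from the same bound, completing the argument.

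The computation is short, in agreement with the claim that the result is classical. The only step demanding genuine care — and the one I expect to be the main, though still routine, technical obstacle — is the interpolation inequality on a general (possibly unbounded or merely open) interval $I$: I would secure it through a Stein-type extension operator, verifying that a single extension is bounded on both $L^2$ and $H^s$ so that the interpolated constant remains controlled.
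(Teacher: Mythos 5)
Your argument is correct and coincides with the paper's own approach: the authors give no written proof, saying only that the result is ``classical using interpolation between $L^2$ and $H^s$'', and your scheme --- the Fourier-side H\"older inequality $\norm{f}_{H^{s\theta}} \leq \norm{f}_{L^2}^{1-\theta} \norm{f}_{H^s}^{\theta}$ on $\R$, transferred to an interval $I$ by a single Stein-type extension operator bounded simultaneously on $L^2$ and $H^s$, then applied to the increments $u(t) - u(t')$ --- is precisely that classical argument. One point worth flagging: what you prove is the inclusion $W^{1,\infty}([0,T];L^2(I)) \cap L^\infty((0,T);H^s(I)) \subset \mathcal{C}^{0,1-\theta}([0,T];H^{s\theta}(I))$, i.e.\ the reverse of the inclusion sign as printed in the lemma. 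The printed direction cannot be meant literally (a function merely H\"older continuous in time with values in $H^{s\theta}$ need not be differentiable in time, nor have any $H^s$ spatial regularity), and the paper only ever invokes the lemma in your direction --- for instance in the proof of Lemma \ref{lem:zeta_est}, where $u \in L^\infty((0,T);H^s)$ together with the time regularity yields $u \in \mathcal{C}^{0,1-\frac{3}{s}}([0,T];H^3)$ --- so you have silently proved the statement as it is actually used. If you want the write-up airtight, the only detail to add is the routine observation that the essential $H^s$ bound holds for \emph{every} $t$ (not just a.e.): since $u$ is Lipschitz into $L^2$, weak compactness of bounded sets in $H^s$ upgrades the a.e.\ bound to a pointwise one, after which your increment estimate is valid for all $t, t' \in [0,T]$.
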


\subsection{Behavior and integrability near the origin}

First, we precise the behavior of the solution near the cancellation point $x=0$ on $[0, T]$, possibly reducing $T$ if necessary. Note that we already know $u(t, 0) = 0$ for all $t \in \mathbb{R}$, since $u(t)$ is odd. As for the spatial derivative of $u$ at $x=0$, we demonstrate that it does not vanish at least within a small time interval. 

\begin{lemma} \label{lem:zeta_est}
    Let $\varphi$ satisfying the assumptions of Theorem \ref{th:non-prop_regularity} for some $s > \frac{7}{2}$ and $u$ be  the solution to \eqref{logNLS} with initial data $\varphi$.
    Then there exists $\tau_0 \in (0, T)$ and $\delta > 0$ such that for all $t \in [0, \tau_0]$, the function $\zeta (t) \coloneqq \partial_x u (t, 0)$ satisfies
    \begin{equation*}
        \abs{\zeta (t) - \zeta (0)} \leq \frac{\abs{\zeta (0)}}{2}, \qquad
        \abs{\zeta (t)} > \frac{\abs{\zeta (0)}}{2},
    \end{equation*}
    and
    \begin{equation} \label{eq:taylor_exp}
        \abs{ u(t, x) - \zeta (t) x } \leq \delta x^2.
    \end{equation}
    In particular, $\zeta (t) \neq 0$ for  $t \in [0, \tau_0]$.
\end{lemma}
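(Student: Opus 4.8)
The plan is to work under the hypothesis by which we negate the conclusion of Theorem~\ref{th:non-prop_regularity}, namely that $u \in L^\infty((0,T); H^s(I_{x_0}))$ for some $x_0, T > 0$, and to extract from it pointwise-in-time $\mathcal{C}^2$ control of $u$ near the origin. The first step is a time-regularity upgrade. In all the cases of the theorem, the restriction $u|_{I_{x_0}}$ belongs to $\mathcal{C}([0,T]; L^2(I_{x_0}))$: this follows from $u \in W^{1,\infty}_{\loc}(\R; L^2)$ in the setting of Lemma~\ref{lem:Cauchy}, and from $u - \varphi \in \mathcal{C}(\R; L^2(\R))$ (with $\varphi$ time-independent, hence $\varphi|_{I_{x_0}} \in L^2(I_{x_0})$) in the setting of Theorem~\ref{th:Cauchy_logGP}. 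Combining this strong $L^2$-in-time continuity with the contradiction bound $\|u\|_{L^\infty((0,T); H^s(I_{x_0}))} < \infty$, the classical weak-continuity lemma of Lions--Magenes type yields $u \in \mathcal{C}_w([0,T]; H^s(I_{x_0}))$ together with the pointwise bound $\|u(t)\|_{H^s(I_{x_0})} \le \|u\|_{L^\infty((0,T); H^s(I_{x_0}))}$ for every $t \in [0,T]$, not merely for almost every $t$.

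Next I would deduce the continuity of $\zeta$. Since $s > \frac72 > \frac52$, the embedding $H^s(I_{x_0}) \hookrightarrow \mathcal{C}^2(\overline{I_{x_0}})$ is compact, so weak convergence in $H^s$ along a bounded sequence forces strong convergence in $\mathcal{C}^2$; hence $t \mapsto u(t)$ is continuous from $[0,T]$ into $\mathcal{C}^2(\overline{I_{x_0}})$, and in particular $\zeta(t) = \partial_x u(t,0)$ is continuous. Because $\varphi \in \mathfrak{D}(\Omega)$ gives $\zeta(0) = \varphi'(0) \neq 0$, continuity lets me pick $\tau_0 \in (0,T)$ small enough that $|\zeta(t) - \zeta(0)| \le \frac{|\zeta(0)|}{2}$ on $[0,\tau_0]$ (shrinking the constant slightly, say to $\frac{|\zeta(0)|}{3}$, if the strict inequality is desired); the reverse triangle inequality then gives $|\zeta(t)| > \frac{|\zeta(0)|}{2}$, whence $\zeta(t) \neq 0$ there.

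For the Taylor bound~\eqref{eq:taylor_exp}, I use that $u(t)$ is odd (Lemma~\ref{lemma_odd}), so $u(t,0) = 0$ and $\partial_x u(t,0) = \zeta(t)$. Taylor's formula with integral remainder gives, for $x \in I_{x_0}$,
\begin{equation*}
    u(t,x) - \zeta(t)\, x = \int_0^x (x-y)\, \partial_x^2 u(t,y) \dd y,
\end{equation*}
so that $|u(t,x) - \zeta(t) x| \le \frac12 \|\partial_x^2 u(t)\|_{L^\infty(I_{x_0})}\, x^2$. The Step~1 bound and the embedding $H^s \hookrightarrow \mathcal{C}^2$ make $\|\partial_x^2 u(t)\|_{L^\infty(I_{x_0})} \le C_s \|u(t)\|_{H^s(I_{x_0})}$ bounded uniformly in $t \in [0,\tau_0]$, so setting $\delta := \tfrac12 \sup_{t \in [0,\tau_0]} \|\partial_x^2 u(t)\|_{L^\infty(I_{x_0})} < \infty$ closes the estimate.

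The main obstacle is purely one of time regularity: the Cauchy theory only delivers weak-in-time continuity in $H^2$, while the contradiction hypothesis is an essential-sup bound in $H^s$, so the estimates of the lemma — which must hold for every $t$ and require a genuine $L^\infty$ (not $L^2$) bound on $\partial_x^2 u$ — cannot be read off directly. The whole point of Step~1 is to convert these two weak pieces of information into a bound valid for all $t$ and strong enough (via the compact Sobolev embedding) to control the second derivative pointwise; once this is in place, the continuity of $\zeta$ and the Taylor estimate are routine.
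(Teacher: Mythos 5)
Your proof is correct, and its skeleton matches the paper's: both arguments convert the contradiction hypothesis $u \in L^\infty((0,T);H^s(I_{x_0}))$, together with the time regularity supplied by the Cauchy theory, into continuity of $t \mapsto u(t)$ with values in $\mathcal{C}^2$ near the origin, after which the continuity of $\zeta$, the choice of $\tau_0$, and the Taylor estimate are immediate. The mechanism for this upgrade is where you diverge: the paper invokes Lemma~\ref{lem:regularity}, i.e.\ the interpolation of $W^{1,\infty}([0,T];L^2(I_{x_0})) \cap L^\infty((0,T);H^s(I_{x_0}))$ into $\mathcal{C}^{0,1-3/s}([0,T];H^3(I_{x_0}))$, followed by the embedding $H^3(I_{x_0}) \hookrightarrow \mathcal{C}^2_b(I_{x_0})$, whereas you first apply the Lions--Magenes weak-continuity lemma to get $u \in \mathcal{C}_w([0,T];H^s(I_{x_0}))$ with an everywhere-in-$t$ norm bound, and then use compactness of $H^s(I_{x_0}) \hookrightarrow \mathcal{C}^2(\overline{I_{x_0}})$ to turn weak $H^s$-continuity into strong $\mathcal{C}^2$-valued continuity. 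Both are sound. The paper's interpolation route has the advantage of producing a quantitative H\"older modulus of continuity in time, which it reuses downstream (e.g.\ in Lemma~\ref{lem:kdv} to get $u \in \mathcal{C}^0([0,T];H^{\tilde{s}}(I_{x_1}))$ and $W^{1,\infty}$-in-time information); your route is purely qualitative but asks marginally less of the Cauchy theory, since $\mathcal{C}([0,T];L^2)$ suffices in place of $W^{1,\infty}([0,T];L^2)$, which is convenient in the logGP case where only $u - \varphi \in \mathcal{C}(\R;L^2)$ is stated. Your attention to the strict inequality $\abs{\zeta(t)} > \abs{\zeta(0)}/2$ (shrinking the continuity ball slightly, or equivalently choosing $\tau_0$ so that the sup of $\abs{\zeta(t)-\zeta(0)}$ is strictly below $\abs{\zeta(0)}/2$) and the explicit integral-remainder form of Taylor's formula, using $u(t,0)=0$ from oddness, are both fine and consistent with the paper's (terser) treatment.
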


\begin{proof}
    By Lemma \ref{lem:regularity} and usual Sobolev embedding $H^3 (I_{x_0}) \hookrightarrow \mathcal{C}^2_b (I_{x_0})$, we already have~$u \in \mathcal{C}^{0, 1-\frac{3}{s}} ([0, T]; \mathcal{C}^2_b (I_{x_0}))$. Thus, we infer $\partial_x u \in \mathcal{C}^0 ([0, T] \times I_{x_0})$ which in turn implies that~$\zeta \in \mathcal{C}^0([0, T])$. The conclusion for the first and second estimates follows as $\zeta (0) \neq 0$ by assumption.
    
    The last estimate \eqref{eq:taylor_exp} follows from Taylor's theorem along with the fact that $\partial^2_{x} u$ is uniformly bounded on $[0, T] \times I_{x_0}$.
\end{proof}

From such an expansion, it is then straightforward to see that near the origin, the function~$(t,x) \mapsto u(t,x)$ only vanishes at the point $x=0$.

\begin{corollary} \label{cor:u_comp_x}
   Taking the same assumptions and notations as in Lemma \ref{lem:zeta_est}, let~$x_1 \coloneqq \min (x_0, \frac{\abs{\zeta (0)}}{4 \delta}) > 0$.
    Then for all $t \in [0, \tau_0]$ and $x \in I_{x_1}$, there holds \eqref{eq:taylor_exp} as well as
    \begin{equation} \label{eq:taylor_in_1st_order}
        \frac{\abs{\zeta (0)}}{4} \abs{x} \leq \abs{u (t, x)} \leq (\abs{\zeta (t)} + \frac{\abs{\zeta (0)}}{4}) \abs{x}.
    \end{equation}
    In particular, for any $t \in [0, \tau_0]$ and $x \in I_{x_1}$, we have $u (t, x) = 0$ if and only if $x = 0$.
\end{corollary}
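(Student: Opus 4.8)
The plan is to derive both inequalities in \eqref{eq:taylor_in_1st_order} directly from the first-order Taylor estimate \eqref{eq:taylor_exp} established in Lemma~\ref{lem:zeta_est}, combining the triangle inequality with the lower bound $\abs{\zeta (t)} > \frac{\abs{\zeta (0)}}{2}$. The only quantitative ingredient is the choice of the radius $x_1$, which is tailored precisely so that the quadratic remainder $\delta x^2$ is dominated by a quarter of the leading linear term throughout the interval $I_{x_1}$. Since $x_1 \leq x_0$ by construction, the expansion \eqref{eq:taylor_exp} indeed holds on $I_{x_1}$, so it carries over verbatim.

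Concretely, for $t \in [0, \tau_0]$ and $x \in I_{x_1}$ I would first record that $\abs{x} \leq x_1 \leq \frac{\abs{\zeta (0)}}{4 \delta}$ yields $\delta x^2 \leq \frac{\abs{\zeta (0)}}{4} \abs{x}$. For the upper bound, the triangle inequality applied to \eqref{eq:taylor_exp} gives $\abs{u (t, x)} \leq \abs{\zeta (t)} \abs{x} + \delta x^2 \leq (\abs{\zeta (t)} + \frac{\abs{\zeta (0)}}{4}) \abs{x}$. For the lower bound, the reverse triangle inequality together with $\abs{\zeta (t)} > \frac{\abs{\zeta (0)}}{2}$ gives $\abs{u (t, x)} \geq \abs{\zeta (t)} \abs{x} - \delta x^2 > \frac{\abs{\zeta (0)}}{2} \abs{x} - \frac{\abs{\zeta (0)}}{4} \abs{x} = \frac{\abs{\zeta (0)}}{4} \abs{x}$, which is exactly the claimed estimate.

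For the final assertion, I would simply observe that the lower bound forces $\abs{u (t, x)} > 0$ whenever $x \neq 0$, since $\zeta (0) \neq 0$ by assumption, while $u (t, 0) = 0$ because $u (t)$ is odd; hence $u (t, x) = 0$ if and only if $x = 0$ on $I_{x_1}$. There is essentially no genuine obstacle here, as the statement is a direct consequence of the expansion already proved in Lemma~\ref{lem:zeta_est}. The only point requiring a little care — and the very reason $x_1$ is defined as a minimum — is to guarantee simultaneously that $x_1 \leq x_0$, so that \eqref{eq:taylor_exp} is available, and that $x_1 \leq \frac{\abs{\zeta (0)}}{4 \delta}$, so that the quadratic remainder can be absorbed into the linear term.
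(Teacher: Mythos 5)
Your proof is correct and follows exactly the route the paper intends: the corollary is stated without a written proof precisely because it is the immediate consequence of \eqref{eq:taylor_exp}, the bound $\abs{\zeta(t)} > \frac{\abs{\zeta(0)}}{2}$, and the choice of $x_1$ that you spell out. Nothing is missing.
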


We now show that $\log{\abs{u(t, x)}^2}$ is integrable near the origin, and in fact belongs to $L^p$ spaces for all $p < \infty$.

\begin{lemma} \label{lem:int_ln_u}
    With the same assumptions and notations as in Lemma \ref{lem:zeta_est} and Corollary~\ref{cor:u_comp_x}, for all $p \in [1, \infty)$, there exists $K_p > 0$ such that, for all $t \in [0, \tau_0]$,
    \begin{equation*}
        \norm{\log{\abs{u (t)}^2}}_{L^p (I_{x_1})} \leq K_p.
    \end{equation*}
\end{lemma}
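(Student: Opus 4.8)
The plan is to exploit the two-sided pointwise control of $\abs{u(t,x)}$ by $\abs{x}$ furnished by Corollary~\ref{cor:u_comp_x}, which collapses the claim to the elementary fact that $\log\abs{x}$ is integrable to any power near the origin. The only thing requiring a little care is the uniformity of the constants in $t \in [0, \tau_0]$, and this is precisely what the uniform control of $\zeta$ in Lemma~\ref{lem:zeta_est} provides.

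First I would record uniform comparison constants. Since $\abs{\zeta(t) - \zeta(0)} \leq \frac{\abs{\zeta(0)}}{2}$ on $[0, \tau_0]$, we have $\abs{\zeta(t)} \leq \frac{3}{2}\abs{\zeta(0)}$, so the upper coefficient in \eqref{eq:taylor_in_1st_order} is bounded by $\frac{7}{4}\abs{\zeta(0)}$ independently of $t$. Feeding this into \eqref{eq:taylor_in_1st_order} yields two constants $0 < c_1 \leq c_2$, independent of $t$, with
\begin{equation*}
    c_1 \abs{x} \leq \abs{u(t,x)} \leq c_2 \abs{x}, \qquad \forall\, t \in [0, \tau_0], \ \forall\, x \in I_{x_1},
\end{equation*}
where one may take $c_1 = \frac{\abs{\zeta(0)}}{4}$ and $c_2 = \frac{7\abs{\zeta(0)}}{4}$.

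Next I would take logarithms of the squared inequality to get the pointwise bound
\begin{equation*}
    2\log c_1 + 2\log\abs{x} \leq \log\abs{u(t,x)}^2 \leq 2\log c_2 + 2\log\abs{x},
\end{equation*}
whence $\abs{\log\abs{u(t,x)}^2} \leq C + 2\,\abs{\log\abs{x}}$ with $C \coloneqq 2\max(\abs{\log c_1}, \abs{\log c_2})$ independent of $t$ and $x$. Finally I would invoke the integrability of the logarithmic singularity: for every $p \in [1, \infty)$,
\begin{equation*}
    \int_{I_{x_1}} \abs{\log\abs{x}}^p \dd x = 2\int_0^{x_1} \abs{\log x}^p \dd x < \infty,
\end{equation*}
so that by the triangle inequality in $L^p(I_{x_1})$ one obtains the desired uniform bound with, say, $K_p \coloneqq C\,(2x_1)^{1/p} + 2\,\norm{\log\abs{\cdot}}_{L^p(I_{x_1})}$, which depends only on $p$, $x_1$ and $\zeta(0)$ but not on $t$.

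There is genuinely no hard step here: the estimate is a direct consequence of Corollary~\ref{cor:u_comp_x} together with the sub-power growth of $\log$ at the origin. If anything is delicate, it is only bookkeeping the $t$-independence of the constants, which is guaranteed upstream by Lemma~\ref{lem:zeta_est}; the conclusion then holds uniformly on $[0, \tau_0]$ exactly as stated.
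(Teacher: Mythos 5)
Your proof is correct and follows essentially the same route as the paper: both arguments reduce the claim to the two-sided comparison $\abs{u(t,x)} \asymp \abs{x}$ from Corollary~\ref{cor:u_comp_x} (with $t$-uniform constants coming from Lemma~\ref{lem:zeta_est}) plus the $L^p$-integrability of $\log\abs{x}$ near the origin. The paper phrases this as the decomposition $\log\abs{u}^2 = 2\log\abs{u/x} - 2\log\abs{x}$ with the first term uniformly in $L^\infty$, which is exactly your pointwise bound $\abs{\log\abs{u}^2} \leq C + 2\abs{\log\abs{x}}$ in different clothing.
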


\begin{proof}
    For all $t \in [0, \tau_0]$ and $x \in I_{x_1} \setminus \{ 0 \}$, we can decompose the logarithm as
    \begin{equation*}
        \log{\abs{u(t,x)}^2} = 2 \log{\abs{\frac{u(t,x)}{x}}} - 2 \log{\abs{x}}.
    \end{equation*}
    We then observe that the function $\abs{\frac{u(t,x)}{x}}$ is uniformly bounded and uniformly bounded away from $0$ thanks to equation \eqref{eq:taylor_in_1st_order}, which implies that the first term is actually in~$L^\infty (I_{x_1})$ and thus in any $L^p (I_{x_1})$ with a uniform bound. The conclusion follows from the fact that~$\log{\abs{x}} \in L^p (I_{x_1})$ for all $p \in [1, \infty)$.
\end{proof}

We can also show that the non-linearity in \eqref{logNLS} is actually $H^1$ near $0$.

\begin{lemma} \label{lem:int_d_x_nonlin}
    With the same assumptions and notations as in Lemma \ref{lem:zeta_est} and Corollary~\ref{cor:u_comp_x}, for all $t \in [0, \tau_0]$, the function $u (t) \log{\abs{u (t)}^2} \in \mathcal{C}^1 (I_{x_1} \setminus \{ 0 \})$, and for all $x \in I_{x_1} \setminus \{ 0 \}$, 
    \begin{equation} \label{eq:d_x_nonlin}
        \partial_x \Bigl( u (t, x) \log{\abs{u (t, x)}^2} \Bigr) = \partial_x u (t,x) \log{\abs{u (t,x)}^2} + 2 \frac{u (t,x) \Re ( u(t,x) \overline{\partial_x u (t,x)})}{\abs{u (t,x)}^2}.
    \end{equation}
    Moreover, we have $u (t) \log{\abs{u (t)}^2} \in H^1 (I_{x_1})$ with a uniform bound with respect to $t \in [0, \tau_0]$.
\end{lemma}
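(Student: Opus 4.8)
The plan is to prove the formula and the $\mathcal{C}^1$ regularity first on the punctured interval $I_{x_1} \setminus \{ 0 \}$, where $u(t)$ does not vanish and the nonlinearity is smooth, and then to upgrade the resulting classical derivative to a genuine weak derivative across the origin. As a starting point, recall from the proof of Lemma~\ref{lem:zeta_est} that $u(t, \cdot) \in \mathcal{C}^2_b (I_{x_0})$, while Corollary~\ref{cor:u_comp_x} ensures $u(t, x) \neq 0$ for every $x \in I_{x_1} \setminus \{ 0 \}$. Hence the map $x \mapsto u(t,x) \log \abs{u(t,x)}^2$ is the composition of the $\mathcal{C}^1$ map $x \mapsto u(t,x)$ with the function $z \mapsto z \log \abs{z}^2$, which is smooth on $\C \setminus \{ 0 \}$; this already yields the $\mathcal{C}^1$ regularity on $I_{x_1} \setminus \{ 0 \}$. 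Differentiating with the product and chain rules, and using $\partial_x \log \abs{u}^2 = \partial_x \log (u \bar u) = (\bar u \, \partial_x u + u \, \overline{\partial_x u}) / \abs{u}^2 = 2 \Re (u \, \overline{\partial_x u}) / \abs{u}^2$, produces exactly \eqref{eq:d_x_nonlin}.

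Next I would bound the right-hand side $g(t, \cdot)$ of \eqref{eq:d_x_nonlin} in $L^2 (I_{x_1})$ uniformly in $t \in [0, \tau_0]$. For the first term, $\abs{\partial_x u \, \log \abs{u}^2} \leq \norm{\partial_x u}_{L^\infty} \abs{\log \abs{u}^2}$, which lies in $L^2$ uniformly because $\partial_x u$ is uniformly bounded on $[0, \tau_0] \times I_{x_1}$ and $\log \abs{u(t)}^2 \in L^2 (I_{x_1})$ uniformly by Lemma~\ref{lem:int_ln_u}. For the second term the key cancellation is $\abs{\Re (u \, \overline{\partial_x u})} \leq \abs{u} \abs{\partial_x u}$, so that $\bigl| 2 u \, \Re (u \, \overline{\partial_x u}) / \abs{u}^2 \bigr| \leq 2 \abs{\partial_x u}$ pointwise on $I_{x_1} \setminus \{ 0 \}$, which is uniformly bounded and hence in $L^2$. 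The same ingredients give $f(t) \coloneqq u(t) \log \abs{u(t)}^2 \in L^2 (I_{x_1})$ uniformly, since $\abs{u}$ is bounded and $\log \abs{u}^2 \in L^2$.

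Finally I would cross the origin. The function $f(t, \cdot)$ extends continuously by $f(t, 0) = 0$: indeed $\abs{f(t,x)} \leq \abs{u(t,x)} \, \abs{\log \abs{u(t,x)}^2}$, and using $\abs{u} \lesssim \abs{x}$ from Corollary~\ref{cor:u_comp_x} together with the decomposition of $\log \abs{u}^2$ from the proof of Lemma~\ref{lem:int_ln_u}, the right-hand side is $O(\abs{x} \, \abs{\log \abs{x}}) \to 0$. Then, for any $\phi \in \mathcal{C}^\infty_c (I_{x_1})$, I would split $\int f \, \phi'$ over $\{ \eps < \abs{x} < x_1 \}$, integrate by parts on each side (where $f$ is $\mathcal{C}^1$), and let $\eps \to 0$: the boundary contributions $f(t, \pm \eps) \phi(\pm \eps)$ vanish by continuity of $f$ with $f(t,0) = 0$, while the interior integral converges by dominated convergence since $g(t, \cdot) \in L^1 (I_{x_1})$. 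This yields $\int_{I_{x_1}} f \, \phi' = - \int_{I_{x_1}} g \, \phi$, so $g(t, \cdot)$ is the weak derivative of $f(t)$ on all of $I_{x_1}$; combined with $f(t), g(t) \in L^2 (I_{x_1})$ uniformly, we conclude $f(t) \in H^1 (I_{x_1})$ with a uniform-in-$t$ bound.

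The main obstacle is precisely this passage across the cancellation point $x = 0$, where the nonlinearity is genuinely singular. Everything hinges on the linear vanishing $\abs{u} \sim \abs{x}$: it both produces the pointwise cancellation controlling the singular quotient $u \, \Re (u \, \overline{\partial_x u}) / \abs{u}^2$ by $2 \abs{\partial_x u}$, and guarantees the continuity $f(t, 0) = 0$ that annihilates the boundary terms in the integration by parts. Away from the origin the computation is entirely routine.
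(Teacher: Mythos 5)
Your proof is correct and follows essentially the same route as the paper: differentiate the nonlinearity classically on $I_{x_1} \setminus \{0\}$ where $u$ does not vanish, bound the first term via Lemma~\ref{lem:int_ln_u} and the second via the pointwise cancellation $\abs{u \,\Re(u\,\overline{\partial_x u})/\abs{u}^2} \leq \abs{\partial_x u}$, and conclude membership in $H^1(I_{x_1})$. The only difference is that you spell out the passage across $x=0$ (continuous extension of $u\log\abs{u}^2$ by $0$ and the $\eps$-truncated integration by parts), which the paper leaves implicit; this is a welcome clarification rather than a different argument.
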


\begin{proof}
    As we already know that the $u \in \mathcal{C}^0 ([0, T] \times I_{x_1})$, it suffices to show that we have~$\partial_x \Bigl( u (t) \log{\abs{u (t)}^2} \Bigr) \in \mathcal{C}^0 (I_{x_1} \setminus \{ 0 \}) \cap L^2 (I_{x_1})$. Given that $u (t, x) \in \mathcal{C}^1 (I_{x_1})$ for all $t \in \mathbb{R}$ and only vanishes at $x=0$, this ensures the desired regularity and justifies differentiating the nonlinearity, leading to equation~\eqref{eq:d_x_nonlin}.
    For the second term of the right-hand side in \eqref{eq:d_x_nonlin}, we can directly estimate that
    \begin{equation*}
        \abs{\frac{u (t,x) \Re ( u(t,x) \overline{\partial_x u (t,x)})}{\abs{u (t,x)}^2}} \leq \abs{\partial_x u (t,x)},
    \end{equation*}
    which proves that the quantity of interest is in $L^2 (I_{x_1})$. To handle the first term on the right-hand side of \eqref{eq:d_x_nonlin}, we use the continuity of $\partial_x u$ on $[0, T] \times I_{x_1}$ along with Lemma \ref{lem:int_ln_u} which implies a uniform bound in $L^2 (I_{x_1})$.
\end{proof}

\subsection{Integrated equation on the derivative and conclusion}

We first derive an evolution equation satisfied by the spatial derivative $\partial_x u$ by differentiating \eqref{logNLS} with respect to $x$, then we integrate this equation in time. This integral form will be instrumental in analyzing the local behavior of the derivative near the singularity at $x=0$.

\begin{lemma} \label{lem:kdv}
    With the same assumptions and notations as in Lemma \ref{lem:zeta_est} and Corollary~\ref{cor:u_comp_x}, there holds $\partial_x u \in W^{1, \infty} ((0, \tau_0); L^2 (I_{x_1}))$ and 
    for all $t \in [0, \tau_0]$ and $x \in I_{x_1} \setminus \{ 0 \}$,
    \begin{equation} \label{eq:d_x_logNLS}
        i \partial_t \partial_x u (t,x) + \frac{1}{2} \partial_{x}^3 u (t,x) = \lambda \partial_x u (t,x) \log{\abs{u (t,x)}^2} + 2 \lambda \frac{u (t,x) \Re ( u(t,x) \overline{\partial_x u (t,x)})}{\abs{u (t,x)}^2}.
    \end{equation}
    Therefore, 
    for all $x \in I_{x_1} \setminus \{ 0 \}$, $\partial_x u ( \cdot, x) \in \mathcal{C}^1 ([0, \tau_0])$.
    Last, for all $t \in [0, \tau_0]$ and $x \in I_{x_1} \setminus \{ 0 \}$, there holds
    \begin{multline} \label{eq:int_form_dx_u}
        i \partial_x u (t,x) - i \varphi' (x) + \int_0^t \partial_{x}^3 u (\tau, x) \dd \tau \\ 
            = \lambda \int_0^t \partial_x u (\tau,x) \log{\abs{u (\tau,x)}^2} \dd \tau + 2 \lambda \int_0^t \frac{u (\tau,x) \Re ( u(\tau,x) \overline{\partial_x u (\tau,x)})}{\abs{u (\tau,x)}^2} \dd \tau.
    \end{multline}
\end{lemma}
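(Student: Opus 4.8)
The plan is to upgrade the regularity assumed for contradiction into enough smoothness that \eqref{logNLS} may be differentiated in space away from the singular point, and then to integrate the resulting identity in time. Everything takes place on $I_{x_1}$, where Corollary~\ref{cor:u_comp_x} guarantees that $u(t,\cdot)$ vanishes only at $x=0$, so that the nonlinearity and all quotients appearing below are well defined and smooth away from the origin. First I would promote the time regularity: since we argue by contradiction, $u$ lies in $L^\infty((0,T);H^s(I_{x_0}))$ as well as in $W^{1,\infty}((0,T);L^2(I_{x_0}))$ by the Cauchy theory, so the interpolation of Lemma~\ref{lem:regularity} yields $u \in \mathcal{C}^{0,1-\theta}([0,T];H^{s\theta}(I_{x_0}))$ for any $\theta\in(0,1)$. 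Choosing $\theta$ close enough to $1$ that $s\theta>\frac72$ (possible because $s>\frac72$) and using the embedding $H^{s\theta}\hookrightarrow\mathcal{C}^3_b$, this makes $(t,x)\mapsto\partial_x^3 u(t,x)$ continuous on $[0,T]\times I_{x_0}$; in particular $\partial_x^2 u(t)\in H^1(I_{x_1})$ uniformly in $t\in[0,\tau_0]$.

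Next I would recover the time derivative of $\partial_x u$. Rewriting \eqref{logNLS} as $\partial_t u = i\partial_x^2 u - i\lambda\, u\log\abs{u}^2$ and combining the previous step with Lemma~\ref{lem:int_d_x_nonlin}, which ensures $u\log\abs{u}^2\in H^1(I_{x_1})$ uniformly in time, gives $\partial_t u\in L^\infty((0,\tau_0);H^1(I_{x_1}))$. A routine commutation of the weak space and time derivatives, legitimate because $u\in W^{1,\infty}(L^2)$ and $\partial_t u\in L^\infty(H^1)$ on $I_{x_1}$, then produces $\partial_x u\in W^{1,\infty}((0,\tau_0);L^2(I_{x_1}))$ together with the identity $\partial_t\partial_x u=\partial_x\partial_t u$. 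Differentiating \eqref{logNLS} in $x$ and substituting the expression \eqref{eq:d_x_nonlin} for $\partial_x(u\log\abs{u}^2)$ then yields \eqref{eq:d_x_logNLS} pointwise on $I_{x_1}\setminus\{0\}$.

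It then remains to verify the claimed $\mathcal{C}^1$-in-time regularity at fixed $x\neq0$ and to integrate. The term $\partial_x^3 u$ on the right of \eqref{eq:d_x_logNLS} is continuous in $t$ by the first step, while the remaining two terms are continuous in $t$ at each fixed $x\neq0$ since $u$ and $\partial_x u$ are continuous in $(t,x)$ and $u(t,x)\neq0$ there by Corollary~\ref{cor:u_comp_x}, so that $\log\abs{u}^2$ and the quotient are well defined and continuous. Hence $\partial_t\partial_x u(\cdot,x)\in\mathcal{C}^0([0,\tau_0])$, that is $\partial_x u(\cdot,x)\in\mathcal{C}^1([0,\tau_0])$, and the fundamental theorem of calculus applied to \eqref{eq:d_x_logNLS} between $0$ and $t$ delivers \eqref{eq:int_form_dx_u}, using $\partial_x u(0,x)=\varphi'(x)$.

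The main obstacle is the second step: establishing $\partial_t u\in H^1$ near the origin and legitimately commuting $\partial_t$ and $\partial_x$ across the singular point $x=0$. This is precisely where the a priori high regularity, which controls $\partial_x^2 u$, must be combined with the delicate $H^1$ control of the singular nonlinearity provided by Lemma~\ref{lem:int_d_x_nonlin}, which itself rests on the non-vanishing of $u$ away from $0$ and on the $L^p$-integrability of $\log\abs{u}^2$ from Lemma~\ref{lem:int_ln_u}.
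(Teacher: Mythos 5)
Your proposal is correct and follows essentially the same route as the paper: upgrade time regularity via Lemma~\ref{lem:regularity} and the Sobolev embedding to get continuity of $\partial_x^3 u$, use Lemma~\ref{lem:int_d_x_nonlin} together with the equation to obtain $\partial_t u \in L^\infty((0,\tau_0);H^1(I_{x_1}))$ and hence $\partial_x u \in W^{1,\infty}((0,\tau_0);L^2(I_{x_1}))$, differentiate \eqref{logNLS} in space away from $x=0$, check continuity in time of each term, and integrate. The only discrepancy is the spurious factor $\frac{1}{2}$ in front of $\partial_x^3 u$ in \eqref{eq:d_x_logNLS}, which is a typo in the statement (it is absent from the integrated form \eqref{eq:int_form_dx_u}) and not an issue with your argument.
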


\begin{proof}
    From Lemma \ref{lem:int_d_x_nonlin}, we know that the non-linear term $u \log |u|^2$  is $L^\infty ((0, \tau_0); H^1 (I_{x_1}))$, and so is $\partial_{x}^2 u$ by assumption. Thus, from the expression of equation \eqref{logNLS}, we directly get that $\partial_t u \in L^\infty ((0, \tau_0); H^1 (I_{x_1}))$, which in turn proves that $u \in W^{1, \infty} ((0, x_1), H^1 (I_{x_1}))$ thanks again to Lemma \ref{lem:regularity}. As a byproduct, we infer $\partial_x u \in W^{1, \infty} ((0, x_1); L^2 (I_{x_1}))$.

    Similar arguments also shows that all the terms appearing in equation \eqref{logNLS} are actually $\mathcal{C}^1 (I_{x_1} \setminus \{ 0 \})$ at $t \in [0, \tau_0]$ fixed, which allows to differentiate equation \eqref{logNLS} with respect to $x$ on $[0, \tau_0] \times (I_{x_1} \setminus \{ 0 \})$, leading to equation \eqref{eq:d_x_logNLS}.

    Moreover, by assumption on $u$ and from Lemma \ref{lem:regularity}, we have
    \begin{equation*}
        u \in \mathcal{C}^0 ([0, T]; H^{\tilde{s}} (I_{x_1})) \subset \mathcal{C}^0 ([0, T]; \mathcal{C}^3 (I_{x_1}))
    \end{equation*}
    for any $\tilde{s} \in (\frac{7}{2}, s)$, thus $\partial_x^3 u \in \mathcal{C} ([0, \tau_0] \times I_{x_1})$. By Lemma \ref{lem:int_d_x_nonlin}, the right-hand side of \eqref{eq:d_x_logNLS} belongs to $\mathcal{C} ([0, \tau_0] \times (I_{x_1} \setminus \{ 0 \}))$. This leads to $\partial_t \partial_x u (t,x) \in \mathcal{C} ([0, \tau_0] \times (I_{x_1} \setminus \{ 0 \}))$, and therefore $\partial_x u ( \cdot, x) \in \mathcal{C}^1 ([0, \tau_0])$ for all $x \in I_{x_1} \setminus \{ 0 \}$.

    As for the last equality \eqref{eq:int_form_dx_u}, for any $x \in I_{x_1} \setminus \{ 0 \}$ and $t \in [0, \tau_0]$, we can integrate equation \eqref{eq:d_x_logNLS} on $[0, t]$, which readily gives the integrated equation~\eqref{eq:int_form_dx_u}.
\end{proof}

We now isolate the divergent term in the integrated equation and show that the remaining terms are uniformly bounded. As a result, no compensation can occur, ultimately leading to the desired contradiction.

\begin{lemma} \label{lem:est_bound}
    With the same assumptions and notations as in Lemma \ref{lem:zeta_est} and Corollary~\ref{cor:u_comp_x}, there exists $K > 0$ such that, for all $t \in [0, \tau_0]$ and $x \in I_{x_1} \setminus \{ 0 \}$,
    \begin{equation*}
        \abs{\partial_x u (t,x)} + \abs{\varphi' (x)} + \abs{\int_0^t \partial_{x}^3 u (\tau, x) \dd \tau} + \abs{\int_0^t \frac{u (\tau,x) \Re ( u(\tau,x) \overline{\partial_x u (\tau,x)})}{\abs{u (\tau,x)}^2} \dd \tau} \leq K
    \end{equation*}
    and
    \begin{equation*}
        \abs{\int_0^t \partial_x u (\tau,x) \log{\abs{u (\tau,x)}^2} \dd \tau - 2 \log{x}\int_0^t \partial_x u (\tau,x) \dd \tau } \leq K.
    \end{equation*}
\end{lemma}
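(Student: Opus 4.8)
The plan is to handle the two inequalities separately: for the first, I would bound each of the four terms directly from the regularity already established; for the second, I would extract the single genuinely singular contribution by means of the logarithmic decomposition from Lemma~\ref{lem:int_ln_u}.

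For the first estimate, the idea is that each summand is either bounded or the time integral of a bounded function over an interval of length at most $\tau_0$. Lemma~\ref{lem:kdv} provides $u \in \mathcal{C}^0([0,T]; \mathcal{C}^3(I_{x_1}))$, so $\partial_x u$ and $\partial_x^3 u$ are continuous on the compact set $[0,\tau_0] \times \overline{I_{x_1}}$ and hence bounded; this controls $\abs{\partial_x u(t,x)}$ outright and gives $\abs{\int_0^t \partial_x^3 u \dd\tau} \leq \tau_0 \norm{\partial_x^3 u}_{L^\infty}$. The term $\abs{\varphi'(x)}$ is bounded through the embedding $H^{s-1}(\Omega) \hookrightarrow \mathcal{C}_b$, valid since $s - 1 > \frac{5}{2}$. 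Finally, the fractional term has integrand bounded pointwise by $\abs{\partial_x u}$, exactly as in the proof of Lemma~\ref{lem:int_d_x_nonlin}, so its time integral is again at most $\tau_0 \norm{\partial_x u}_{L^\infty}$. Summing the four contributions produces a uniform constant.

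For the second estimate, the crux is that the two integrals merge into one with bounded integrand. Grouping them gives
\[
\int_0^t \partial_x u(\tau,x) \log\abs{u(\tau,x)}^2 \dd\tau - 2 \log x \int_0^t \partial_x u(\tau,x) \dd\tau = \int_0^t \partial_x u(\tau,x) \Bigl( \log\abs{u(\tau,x)}^2 - 2\log x \Bigr) \dd\tau,
\]
and, restricting to $x \in (0, x_1)$ (the case $x < 0$ following by oddness of $u$), the decomposition from Lemma~\ref{lem:int_ln_u} reads $\log\abs{u(\tau,x)}^2 - 2\log x = 2\log\bigl\lvert \tfrac{u(\tau,x)}{x} \bigr\rvert$. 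By Corollary~\ref{cor:u_comp_x} the ratio $\abs{u(\tau,x)/x}$ stays between $\frac{\abs{\zeta(0)}}{4}$ and $\abs{\zeta(\tau)} + \frac{\abs{\zeta(0)}}{4}$ uniformly in $(\tau,x) \in [0,\tau_0] \times (0,x_1)$, so $\log\abs{u/x}$ is uniformly bounded. The integrand is then a product of two bounded functions, and integrating over $[0,t] \subset [0,\tau_0]$ yields the claim.

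I do not anticipate a serious obstacle, as the statement is essentially bookkeeping resting on the earlier lemmas. The one point that matters — and the entire purpose of this lemma — is the recognition that the potentially divergent piece $\int_0^t \partial_x u \log\abs{u}^2 \dd\tau$ differs from the explicit $2\log x \int_0^t \partial_x u \dd\tau$ only by a bounded remainder. This isolates the $\log x$ singularity and prepares the final contradiction: since $\int_0^t \partial_x u(\tau,x) \dd\tau \to \int_0^t \zeta(\tau) \dd\tau \neq 0$ as $x \to 0$, the integrated equation~\eqref{eq:int_form_dx_u} is forced to diverge like $\log x$, contradicting the continuity of $\partial_x u$ near the origin.
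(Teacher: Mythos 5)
Your proof is correct and follows essentially the same route as the paper: each term of the first inequality is bounded via the continuity of $u$ in $\mathcal{C}^0([0,\tau_0];\mathcal{C}^3_b(I_{x_1}))$ together with the pointwise bound of the fractional integrand by $\abs{\partial_x u}$, and the second inequality is reduced to the uniform boundedness of $\log\abs{u(\tau,x)/x}^2$ via Corollary~\ref{cor:u_comp_x}. The only (harmless) cosmetic difference is that you invoke the embedding $H^{s-1}\hookrightarrow\mathcal{C}_b$ for $\varphi'$ and spell out the sign restriction $x>0$, whereas the paper absorbs both into the same continuity statement.
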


\begin{proof}
    In the first inequality, the first three terms are uniformly bounded by the fact that~$u \in \mathcal{C}^0 ([0, \tau_0]; \mathcal{C}_b^3 (I_{x_1}))$. As for the last term, we point out once again that it is well defined for all $t \in [0, \tau_0]$ and $x \in I_{x_1} \setminus \{ 0 \}$, as $u (\tau, x)$ does not vanish for any $\tau \in [0, \tau_0]$ and~$x \in I_{x_1} \setminus \{ 0 \}$. Using again the fact that for all $\tau \in [0, \tau_0]$ and $x \in I_{x_1} \setminus \{ 0 \}$,
    \begin{equation*}
        \abs{\frac{u (\tau,x) \Re ( u(\tau,x) \overline{\partial_x u (\tau,x)})}{\abs{u (\tau,x)}^2}} \leq \abs{\partial_x u (\tau,x)},
    \end{equation*}
    we readily get the boundedness of this last term by the fact that $\partial_x u \in \mathcal{C}^0 ([0, \tau_0]; \mathcal{C}_b^0 (I_{x_1}))$.
    
    For the second estimate of Lemma \ref{lem:est_bound}, we can directly write that
    \begin{equation*}
        \int_0^t \partial_x u (\tau,x) \log{\abs{u (\tau,x)}^2} \dd \tau - 2 \int_0^t \partial_x u (\tau,x) \dd \tau \log{x} = \int_0^t \partial_x u (\tau,x) \log{\abs{\frac{u (\tau,x)}{x}}^2} \dd \tau.
    \end{equation*}
    From Corollary \ref{cor:u_comp_x}, we know that
    \begin{equation*}
        \frac{\abs{\zeta (0)}}{4} \leq \abs{\frac{u (\tau,x)}{x}} \leq \abs{\zeta (t)} + \frac{\abs{\zeta (0)}}{4}.
    \end{equation*}
    and since $\zeta$ is bounded on $[0, \tau_0]$, this proves that $\log{\abs{\frac{u (\tau,x)}{x}}^2}$ is uniformly bounded on~$[0, \tau_0] \times (I_{x_1} \setminus \{ 0 \})$. On the other hand, the function $\partial_x u (\tau, x)$ is also bounded as it is continuous on $[0, \tau_0] \times I_{x_1}$, which ends the proof.
\end{proof}

\begin{lemma} \label{lem:est_ln_explode}
    With the same assumptions and notations as in Lemma \ref{lem:zeta_est} and Corollary~\ref{cor:u_comp_x}, for all $t \in [0, \tau_0]$,
    \begin{equation*}
        \abs{\int_0^t \partial_x u (\tau,x) \log{\abs{u (\tau,x)}^2} \dd \tau} \underset{x \rightarrow 0}{\longrightarrow} \infty.
    \end{equation*}
\end{lemma}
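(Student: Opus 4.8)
The plan is to prove this by isolating the dominant logarithmic contribution and showing it cannot be cancelled. By Lemma~\ref{lem:est_bound}, the quantity $\int_0^t \partial_x u(\tau, x) \log\abs{u(\tau,x)}^2 \dd \tau$ differs from $2 \log{x} \int_0^t \partial_x u(\tau,x) \dd \tau$ by a uniformly bounded remainder. Since $\abs{\log x} \to \infty$ as $x \to 0$, it therefore suffices to show that the coefficient $\int_0^t \partial_x u(\tau,x) \dd \tau$ does not itself vanish (nor decay too fast) as $x \to 0$; in that case the product $\log{x} \int_0^t \partial_x u(\tau,x) \dd \tau$ blows up and dominates the bounded remainder, giving the divergence.

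The key step is thus to control $\int_0^t \partial_x u(\tau, x) \dd \tau$ from below near the origin. First I would note that, by Lemma~\ref{lem:zeta_est}, for each fixed $\tau \in [0, \tau_0]$ we have $\partial_x u(\tau, 0) = \zeta(\tau)$ with $\abs{\zeta(\tau)} > \abs{\zeta(0)}/2 > 0$, and by continuity of $\partial_x u$ on $[0, \tau_0] \times I_{x_1}$ (established via Lemma~\ref{lem:regularity} and the embedding $H^3 \hookrightarrow \mathcal{C}^2_b$) the map $x \mapsto \partial_x u(\tau, x)$ is continuous and close to $\zeta(\tau)$ for small $x$. Hence for $x$ small enough the integrand stays close to $\zeta(\tau)$ uniformly in $\tau$, so that $\int_0^t \partial_x u(\tau, x)\dd\tau \to \int_0^t \zeta(\tau)\dd\tau$ as $x \to 0$ by dominated convergence (the integrand is uniformly bounded). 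The remaining point is that the limit $\int_0^t \zeta(\tau) \dd \tau$ is nonzero: this follows immediately for $t \in (0, \tau_0]$ from the lower bound $\abs{\zeta(\tau)} > \abs{\zeta(0)}/2$ together with the continuity estimate $\abs{\zeta(\tau) - \zeta(0)} \leq \abs{\zeta(0)}/2$, which prevents $\zeta$ from changing sign (in the real-part sense) and forces $\abs{\int_0^t \zeta(\tau)\dd\tau} \geq \frac{\abs{\zeta(0)}}{2} t > 0$.

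Combining these facts, for fixed $t \in (0, \tau_0]$ there is a constant $c > 0$ and a radius $x_2 \in (0, x_1]$ such that $\abs{\int_0^t \partial_x u(\tau,x)\dd\tau} \geq c$ for all $x \in (0, x_2)$. Then
\begin{equation*}
    \abs{\int_0^t \partial_x u(\tau,x) \log\abs{u(\tau,x)}^2 \dd\tau} \geq 2 \abs{\log x}\, \abs{\int_0^t \partial_x u(\tau,x) \dd\tau} - K \geq 2 c \abs{\log x} - K \xrightarrow[x \to 0]{} \infty,
\end{equation*}
which is the claimed divergence. The main obstacle I anticipate is making the lower bound on $\int_0^t \zeta(\tau)\dd\tau$ fully rigorous in the complex-valued setting: $\zeta(\tau)$ is a complex number, so one must argue that it stays within a half-plane (equivalently, that $\Re(\overline{\zeta(0)}\zeta(\tau)) > 0$ throughout $[0,\tau_0]$, which does follow from $\abs{\zeta(\tau) - \zeta(0)} \leq \abs{\zeta(0)}/2$) so that no cancellation occurs in the time integral. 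Everything else is a routine application of the continuity and boundedness already secured in the preceding lemmas.
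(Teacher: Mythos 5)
Your proposal is correct and follows essentially the same route as the paper: reduce via Lemma~\ref{lem:est_bound} to the term $\log{x}\int_0^t \partial_x u(\tau,x)\dd\tau$, pass to the limit $\int_0^t\zeta(\tau)\dd\tau$ by continuity of $\partial_x u$, and bound this limit below by $\frac{\abs{\zeta(0)}}{2}t>0$ using $\abs{\zeta(\tau)-\zeta(0)}\le\abs{\zeta(0)}/2$. The complex-valued issue you flag is handled in the paper (and in your own estimate) by the plain triangle inequality $\abs{t\zeta(0)+\int_0^t(\zeta(\tau)-\zeta(0))\dd\tau}\ge t\abs{\zeta(0)}-\int_0^t\abs{\zeta(\tau)-\zeta(0)}\dd\tau$, so no half-plane argument is needed.
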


\begin{proof}
    By Lemma \ref{lem:est_bound}, we only have to prove that the same conclusion holds for the quantity $\abs{\log{x} \int_0^t \partial_x u (\tau,x) \dd \tau }$. First, we know that $\partial_x u$ is continuous on $[0, \tau_0] \times I_{\varepsilon_0}$, thus
    \begin{equation*}
        \int_0^t \partial_x u (\tau,x) \dd \tau \underset{x \rightarrow 0}{\longrightarrow} \int_0^t \partial_x u (\tau, 0) \dd \tau = \int_0^t \zeta (\tau) \dd \tau.
    \end{equation*}
    Moreover, from the estimates of Lemma \ref{lem:zeta_est}, we get
    \begin{align*}
        \abs{\int_0^t \zeta (\tau) \dd \tau} &= \abs{\int_0^t (\zeta (\tau) - \zeta (0)) \dd \tau + t \zeta (0)} \\
            &\geq \abs{\zeta (0)} t - \abs{\int_0^t (\zeta (\tau) - \zeta (0)) \dd \tau} \\
            &\geq \abs{\zeta (0)} t - \int_0^t \abs{\zeta (\tau) - \zeta (0)} \dd \tau \\
            &\geq \abs{\zeta (0)} t - \int_0^t \frac{\abs{\zeta (0)}}{2} \dd \tau \\
            &\geq \frac{\abs{\zeta (0)}}{2} t > 0.
    \end{align*}
    Since $\int_0^t \partial_x u (\tau,x) \dd \tau$ has a limit which is not $0$ as $x \rightarrow 0$, and as $x \mapsto \abs{\log{x}}$ diverges at infinity, we get the conclusion.
\end{proof}

We can now conclude the proof Theorem \ref{th:non-prop_regularity} by the announced argument of contradiction.

\begin{proof}[Proof of Theorem \ref{th:non-prop_regularity}]
    Let $s > \frac{7}{2}$, and assume that $\Omega$ and $\varphi$ satisfy the assumptions of Theorem \ref{th:non-prop_regularity}. We can then apply Lemma \ref{lem:kdv} with $\tau_0$ and $x_1$ respectively defined in Lemma \ref{lem:zeta_est} and Corollary \ref{cor:u_comp_x}, and get the integrated equation. However, from Lemma \ref{lem:est_bound}, all the terms in this equality are bounded except the quantity $\int_0^t \partial_x u (\tau,x) \log{\abs{u (\tau,x)}^2} \dd \tau$, which is unbounded by Lemma \ref{lem:est_ln_explode}, giving a contradiction.
\end{proof}

\section{Linear toy model} \label{sec:_toy_model}

The next three sections are dedicated to proving Theorem~\ref{theo:propagation}, and we work under its assumptions throughout. In particular, we recall that $\Omega = (-a, a)$. For convenience, we will assume that $a \geq 2$ in the following, but this stands as a superficial assumption as equation~\eqref{logNLS} enjoys spatial scaling invariance. 

In this section, we seek to understand how a first-order cancellation point influences the dynamics of the logarithmic Schrödinger equation. Formally, if~$u(x) \approx x \partial_x u(0)$ near the origin, substituting this ansatz into the logarithmic nonlinearity of \eqref{logNLS} suggests that the equation locally resembles a linear one governed by the self-adjoint operator $-\Delta +\lambda \log|x|^2$. The integrability of the logarithmic potential part then allows us to relate standard Sobolev norms to the ones induced by this operator. To rigorously justify this approach, we introduce a smooth cutoff function~$\chi$  to model the role of such cancellation point.

Let $\chi \in \mathcal{C}^{\infty}(\mathbb{R})$ such that $|\chi| \leq 1$, $\abs{\chi'} \leq 1$ and
\[ \chi(x)= \left\{
\begin{aligned}
    -1  \quad \text{if} \ x \leq -2, \\
    x \quad \text{if} \ |x| \leq \frac12,\\
    1  \quad \text{if} \ x \geq 2.
\end{aligned}
\right.  \]
Such a function $\chi$ is illustrated in Figure \ref{fig:function_chi}.

\input{tikz_picture.tex}

 We consider the linear equation with Dirichlet boundary conditions
\begin{equation} \label{eq:toy_model_sys}
    \begin{cases}
        i \partial_t v + \Delta v = \lambda v \log |\chi|^2, & (t,x) \in (\R,\Omega),\\
        v (t,x) =0, & t \in \mathbb{R},  x\in \partial \Omega, \\
        v(0)=\varphi,
    \end{cases}
\end{equation}
where we recall that $\lambda \in \R^*$. We also consider the associated operator and its domain
\[ A \coloneqq -\Delta +  \lambda\log |\chi|^2, \]
\begin{equation*}
    D(A) = H^2 (\Omega) \cap H^1_0 (\Omega).
\end{equation*}
The operator $A$ is clearly symmetric in $L^2(\Omega)$, as by integration by parts we get that
\[ \langle A v_1, v_2 \rangle = \langle v_1,Av_2 \rangle  \]
for all $v_1, v_2 \in H^2(\Omega) \cap H^1_0 (\Omega)$.
Lastly, we define, for some $\kappa >0$ yet to be fixed, the operator
\begin{gather*}
    A_\kappa \coloneqq A + \kappa, \\
    D(A_\kappa) = D(A),
\end{gather*}
which is symmetric as well. Before proving that these operators are self-adjoint, we first recall Gagliardo-Nirenberg inequality on bounded domains in one dimension, namely
\begin{equation} \label{eq_sobolev_inequality_omega} \tag{GN}
  \| \partial_x^j u \|_{L^{p}} \leq C_{\Omega} \left(  \| \partial_x^k u \|_{L^2}^{\theta} \|u \|_{L^2}^{1-\theta} + \|u \|_{L^2} \right) \leq  2 C_{\Omega} \norm{u}_{H^k}^{\theta} \norm{u}_{L^2}^{1-\theta}, 
\end{equation}
for a constant $C_{\Omega}>0$ which only depends on $\Omega$, and where 
\[ \frac{1}{p} + j + k \theta= \frac12 \quad \text{and} \quad \frac{j}{k} \leq \theta <1   \]
with $j,k \in \N$, $j \leq k$ and $p \geq 2$. We then show that these linear operators generates a Schrödinger flow.

\begin{lemma} \label{lem:self_adjointness}
 The operators $A$ and $A_{\kappa}$ are self-adjoint for all $\kappa \in\R$, and respectively generate evolution groups of unitary operators $t \mapsto e^{it A}$ and $t \mapsto e^{itA_{\kappa}}$, satisfying the identity $e^{itA_{\kappa}}=e^{it \kappa} e^{it A}$ for all $t \in \R$. Moreover:
 \begin{itemize}
     \item The application $t \mapsto e^{- i t A}$ is a group homomorphism.
     \item The evolution group $e^{-i t A}$ is well-defined in $H^1_0 (\Omega)$, in the sense that for all $\varphi \in H^1_0 (\Omega)$, system \eqref{eq:toy_model_sys} has a unique solution in $\mathcal{C}(\mathbb{R}, H^1_0 (\Omega))$ given by $e^{- i t A} \varphi$.
     
 \end{itemize} 
\end{lemma}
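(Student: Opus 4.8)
The plan is to exhibit $A$ as a relatively bounded perturbation of the Dirichlet Laplacian and then invoke the classical Kato--Rellich and Stone theorems. Write $A = -\Delta + V$ with $V \coloneqq \lambda \log|\chi|^2$, and recall that $-\Delta$ with domain $D(\Delta) = H^2(\Omega) \cap H^1_0(\Omega)$ is self-adjoint and nonnegative. The crucial point, and the main technical obstacle, is to control the logarithmic singularity of $V$ at the origin: since $\chi(x) = x$ near $0$ while $|\chi| = 1$ for $|x| \geq 2$ and $\Omega$ is bounded, one has $\log|\chi|^2 \in L^p(\Omega)$ for every $p < \infty$ (although not in $L^\infty$). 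The first step is therefore to prove that $V$ is infinitesimally $(-\Delta)$-bounded. For $u \in D(\Delta)$, Hölder's inequality gives $\|Vu\|_{L^2} \leq |\lambda| \, \|\log|\chi|^2\|_{L^4} \, \|u\|_{L^4}$, and applying \eqref{eq_sobolev_inequality_omega} with $j = 0$, $k = 2$, $p = 4$ (so that $\theta = \tfrac18 < 1$) yields $\|u\|_{L^4} \leq C \bigl( \|\Delta u\|_{L^2}^{1/8} \|u\|_{L^2}^{7/8} + \|u\|_{L^2} \bigr)$. Since $\theta < 1$, Young's inequality converts this into $\|Vu\|_{L^2} \leq \varepsilon \|\Delta u\|_{L^2} + C_\varepsilon \|u\|_{L^2}$ for arbitrarily small $\varepsilon > 0$. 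Thus $V$ has relative bound $0 < 1$, and Kato--Rellich ensures that $A$ is self-adjoint on $D(A) = H^2(\Omega) \cap H^1_0(\Omega)$ and bounded below.

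Adding the real constant $\kappa$ changes neither the domain nor the symmetry, so $A_\kappa = A + \kappa \Id$ is self-adjoint on $D(A)$ as well. By Stone's theorem, each of the self-adjoint operators $A$ and $A_\kappa$ generates a strongly continuous one-parameter group of unitary operators $t \mapsto e^{itA}$ and $t \mapsto e^{itA_\kappa}$; the one-parameter group law is precisely the homomorphism property $e^{-i(t+s)A} = e^{-itA} e^{-isA}$. Since $\kappa \Id$ commutes with $A$, the functional calculus gives the stated identity $e^{itA_\kappa} = e^{it\kappa} e^{itA}$ for all $t \in \R$.

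It remains to establish well-posedness in $H^1_0(\Omega)$. The relative bound-$0$ estimate upgrades, via the KLMN theorem, to the statement that the quadratic form $q(u) = \|\nabla u\|_{L^2}^2 + \lambda \int_\Omega \log|\chi|^2 \, |u|^2$ is closed and bounded below on $H^1_0(\Omega)$; that the potential term is finite and continuous there again uses $\log|\chi|^2 \in L^p$ together with the one-dimensional embedding $H^1_0(\Omega) \hookrightarrow L^\infty(\Omega)$. Hence the form domain is $Q(A) = H^1_0(\Omega)$. Fixing $\kappa$ large enough that $A_\kappa \geq 1$, we have $D(A_\kappa^{1/2}) = Q(A) = H^1_0(\Omega)$, and by the spectral theorem $e^{itA_\kappa}$ commutes with $A_\kappa^{1/2}$; it therefore leaves $H^1_0(\Omega)$ invariant and is strongly continuous there for the $H^1$-norm. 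The phase relation $e^{itA} = e^{-it\kappa} e^{itA_\kappa}$ transfers these properties to $e^{-itA}$, so that for every $\varphi \in H^1_0(\Omega)$ the function $v(t) = e^{-itA}\varphi$ lies in $\mathcal{C}(\R, H^1_0(\Omega))$ and solves \eqref{eq:toy_model_sys}, the Dirichlet boundary condition being encoded in membership in $H^1_0(\Omega)$. Uniqueness follows from the abstract Cauchy problem for the self-adjoint generator: the difference $w$ of two solutions in $\mathcal{C}(\R, H^1_0(\Omega))$ satisfies $\tfrac{d}{dt}\|w\|_{L^2}^2 = 0$ with $w(0) = 0$, hence $w \equiv 0$.

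I expect the genuine difficulty to be concentrated in the first paragraph, namely taming the logarithmic blow-up of $V$ at $x = 0$ well enough to obtain a relative bound strictly below $1$ (in fact equal to $0$); once this is secured, the rest is a fairly mechanical application of Kato--Rellich, Stone, KLMN, and the spectral theorem. A secondary point requiring care is the identification of the form domain as \emph{exactly} $H^1_0(\Omega)$, which is what allows the $H^1_0$ well-posedness statement to be read off from the form calculus.
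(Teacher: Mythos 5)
Your proof is correct and follows essentially the same route as the paper: the same Hölder--Gagliardo-Nirenberg--Young chain showing $V=\lambda\log|\chi|^2$ is infinitesimally $(-\Delta)$-bounded, then Kato--Rellich (the paper cites the equivalent perturbation theorem in Pazy) and Stone's theorem. Your KLMN/form-domain argument for the $H^1_0$ well-posedness is more detailed than the paper's proof, which stops at the $L^2$ unitary group and defers the $H^1$ equivalence to the subsequent Lemma \ref{lemma_positivity_A} and Proposition \ref{prop:evol_group_schrod_A}, but it is consistent with and compatible to what is done there.
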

\begin{proof}
    We establish the result for the operator $A$, as the argument for $A_{\kappa}$ follows along identical lines. First, recall that the Laplace operator $\Delta$ equipped with Dirichlet boundary conditions is symmetric in $L^2(\Omega)$, and for any $f \in L^2(\Omega)$, the equation $(\mu \Id - i \Delta)u=f$ admits a unique solution $u \in D(A)$ for any $\mu>0$ by virtue of the Lax-Milgram theorem. Hence, the operator $-\Delta$ is self-adjoint since its range satisfies $\Ran(\mu \Id - i \Delta)=L^2(\Omega)$. 
    
    We now proceed along the lines of the proof of \cite[Theorem 5.5]{Pazy1983}. Define the potential term $V \coloneqq \lambda \log |\chi|^2$, which is real-valued and belongs to $ L^p(\Omega)$ for every $1 \leq p < \infty$. Applying the Cauchy–Schwarz inequality together with the Sobolev embedding \eqref{eq_sobolev_inequality_omega}, we infer that
    \[ \| Vu \|_{L^2} \leq \| u \|_{L^{4}} \| V \|_{L^4} \leq 2 C_{\Omega} |\lambda \| \log |\chi|^2 \|_{L^4} \| \Delta u \|_{L^2}^{\frac18} \| u \|_{L^2}^{\frac78}.  \] 
    From Young's inequality we then get that
    \[  \| Vu \|_{L^2} \leq \epsilon \|\Delta u \|_{L^2} + C(\epsilon) \|u \|_{L^2} \]
    for any $\epsilon>0$ and with $C(\epsilon)>0$. Therefore, by the perturbation result \cite[Theorem 3.3.2]{Pazy1983}, the operator $A$ is also self-adjoint. By Stone's theorem, it follows that $A$ generates a strongly continuous group of unitary operators $t \mapsto e^{itA}$ on $L^2(\Omega)$, which completes the proof.
\end{proof}

We now state an equivalence result between  $H^1$-norm and the norm induced by $A_{\kappa}$. 

\begin{lemma} \label{lemma_positivity_A}
    For $\kappa \geq 0$ if $\lambda <0$ or $\kappa=\kappa(\lambda,\Omega,\chi)>0 $ large enough if $\lambda >0$, the operator $A_{\kappa}$ is nonnegative in $L^2$, and there exists $c_1$, $C_1>0$ such that for all $v \in H^1_0 (\Omega) \cap H^2 (\Omega)$
    \[ 0 \leq c_1 \|v\|_{H^1}^2 \leq \langle A_{\kappa}v , v \rangle \leq C_1 \| v \|_{H^1}^2.  \]
    Last, the quadratic form $\langle A_{\kappa} v , v \rangle$ can be uniquely extended to $H^1_0 (\Omega)$.
\end{lemma}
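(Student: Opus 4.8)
The plan is to make the quadratic form explicit by integration by parts and then to treat the logarithmic potential as a subcritical perturbation of the kinetic energy, controlled through the Gagliardo--Nirenberg inequality \eqref{eq_sobolev_inequality_omega}. For $v \in H^2(\Omega) \cap H^1_0(\Omega)$, integrating by parts (the boundary terms vanishing thanks to the Dirichlet condition) yields
\begin{equation*}
    \langle A_\kappa v, v \rangle = \norm{\partial_x v}_{L^2}^2 + \lambda \int_\Omega \log\abs{\chi}^2 \, \abs{v}^2 \dd x + \kappa \norm{v}_{L^2}^2.
\end{equation*}
The potential $V = \lambda \log\abs{\chi}^2$ is real-valued and, as already recalled in the proof of Lemma \ref{lem:self_adjointness}, belongs to $L^p(\Omega)$ for every $p \in [1, \infty)$; moreover $\abs{\chi} \leq 1$ forces $\log\abs{\chi}^2 \leq 0$, so that $V$ has the sign of $-\lambda$, which is the structural fact driving the two cases below.

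First I would establish the upper bound. By Cauchy--Schwarz and the embedding $H^1(\Omega) \hookrightarrow L^4(\Omega)$ (through \eqref{eq_sobolev_inequality_omega} with $j=0$, $k=1$, $p=4$, whence $\theta = \frac14$), one gets
\begin{equation*}
    \abs{\int_\Omega \log\abs{\chi}^2 \, \abs{v}^2 \dd x} \leq \norm{\log\abs{\chi}^2}_{L^2} \norm{v}_{L^4}^2 \lesssim \norm{v}_{H^1}^{1/2} \norm{v}_{L^2}^{3/2} \leq \norm{v}_{H^1}^2,
\end{equation*}
and combining this with $\norm{\partial_x v}_{L^2}^2 + \kappa \norm{v}_{L^2}^2 \leq \max(1,\kappa)\norm{v}_{H^1}^2$ furnishes $\langle A_\kappa v, v\rangle \leq C_1 \norm{v}_{H^1}^2$ for some $C_1 = C_1(\lambda, \kappa, \Omega, \chi)$.

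For the lower bound I would distinguish the two sign cases. When $\lambda < 0$ the potential term $\lambda \int_\Omega \log\abs{\chi}^2 \abs{v}^2$ is nonnegative, so $\langle A_\kappa v, v\rangle \geq \norm{\partial_x v}_{L^2}^2$ for any $\kappa \geq 0$, and the Poincaré inequality on the bounded domain $\Omega$ with Dirichlet boundary conditions turns this into $\geq c_1 \norm{v}_{H^1}^2$ with $c_1 = (1+C_P^2)^{-1}$. When $\lambda > 0$ the potential term is $\leq 0$ and must be absorbed: refining the Gagliardo--Nirenberg estimate into $\norm{v}_{L^4}^2 \lesssim \norm{\partial_x v}_{L^2}^{1/2}\norm{v}_{L^2}^{3/2} + \norm{v}_{L^2}^2$ and applying Young's inequality gives, for every $\epsilon > 0$,
\begin{equation*}
    \abs{\lambda \int_\Omega \log\abs{\chi}^2 \, \abs{v}^2 \dd x} \leq \epsilon \norm{\partial_x v}_{L^2}^2 + C(\epsilon) \norm{v}_{L^2}^2.
\end{equation*}
Taking $\epsilon = \frac12$ and then $\kappa = \kappa(\lambda, \Omega, \chi)$ large enough that $\kappa - C(\epsilon) \geq \frac12$, we reach $\langle A_\kappa v, v\rangle \geq \frac12 \norm{v}_{H^1}^2$. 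In both cases this simultaneously yields the coercivity lower bound and the nonnegativity of $A_\kappa$.

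Finally, the extension to $H^1_0(\Omega)$ follows from density and continuity: the form $v \mapsto \langle A_\kappa v, v\rangle$ is well-defined on all of $H^1_0(\Omega)$ (each term makes sense since $v \in H^1_0(\Omega) \hookrightarrow L^\infty(\Omega)$ in one dimension and $\log\abs{\chi}^2 \in L^1(\Omega)$), it is continuous for the $H^1$-norm by the upper bound, and $H^2(\Omega) \cap H^1_0(\Omega)$ is dense in $H^1_0(\Omega)$, so the two-sided inequality passes to the limit and the extension is unique. I expect the only genuine difficulty to be the case $\lambda > 0$, where the logarithmic potential is unbounded below near the origin and sign-indefinite in the form; the whole argument hinges on the subcritical nature of this singularity, namely on $\log\abs{\chi}^2 \in L^2(\Omega)$, which is precisely what lets the Gagliardo--Nirenberg/Young absorption dominate the negative contribution once $\kappa$ is chosen large.
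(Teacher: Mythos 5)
Your proof is correct and follows essentially the same route as the paper: integration by parts to make the form explicit, Hölder/Cauchy--Schwarz plus Gagliardo--Nirenberg and Young to control the logarithmic potential, a sign split on $\lambda$, and absorption by a large $\kappa$ when $\lambda>0$. The only (minor, favorable) differences are that you fix the Hölder exponents to $(2,2)$ where the paper keeps a general pair $(p,q)$, you invoke Poincaré explicitly to handle the case $\lambda<0$, $\kappa=0$ (which the paper's $\min(1,\kappa)$ bound glosses over), and you spell out the density argument for the extension to $H^1_0(\Omega)$.
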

\begin{proof}
    By a direct computation involving an integration by parts, we have for all $v \in H^1_0 (\Omega) \cap H^2 (\Omega)$ that
    \begin{equation} \label{eq_Au_u}
      \langle A_\kappa v, v \rangle  = \int_{\Omega} |\nabla v|^2 + \lambda \int_{\Omega} |v|^2 \log |\chi|^2 \dd x + \kappa \int_{\Omega} |v|^2.
    \end{equation}   
    We have by Hölder's inequality
    \begin{equation*}
       0 \leq - \int_{\Omega} |v|^2 \log |\chi|^2 \dd x \leq \| |v|^2 \|_{L^p} \| \log |\chi|^2 \|_{L^q}=\| v \|_{L^{2p}}^2 \| \log |\chi|^2 \|_{L^q}
    \end{equation*}  
    with $1=\frac{1}{p}+\frac{1}{q}$. We then use Gagliardo-Nirenberg inequality on bounded domains \eqref{eq_sobolev_inequality_omega}, so
    \begin{align*}
        0 \leq - \int_{\Omega} |v|^2 \log |\chi|^2 \dd x &\leq C_{\Omega}^2  \left( \| \nabla v \|_{L^2}^{\frac{1}{2q}} \|v \|_{L^2}^{\frac12 +\frac{1}{2p}} + \|v \|_{L^2} \right) \| \log |\chi|^2 \|_{L^q} \\
            &\leq 2 C_{\Omega}^2  \left( \| \nabla v \|_{L^2}^{\frac{1}{q}} \|v \|_{L^2}^{1 +\frac{1}{p}} + \|v \|_{L^2} \right)  \| \log |\chi|^2 \|_{L^q}.
    \end{align*}  
    
    If $\lambda <0$, we have $\lambda \log |\chi|^2 \geq 0$ and 
    \begin{equation*}
        \langle A_\kappa v, v \rangle \geq \min(1, \kappa) \|\nabla v\|_{L^2}^2 \geq C \min(1, \kappa) \|v\|_{H^1}^2.
    \end{equation*}
    Moreover, from the previous estimates, denoting $X=\norm{u}_{L^2}$ and $Y=\norm{\nabla u}_{L^2}$, there holds
    \begin{equation*}
        \langle A_\kappa v, v \rangle \leq Y^2 + 2|\lambda| C_{\Omega}^2 \| \log |\chi|^2 \|_{L^q} Y^{\frac{1}{q}} X^{1 +\frac{1}{p}} + \left( \kappa + 2 |\lambda| C_{\Omega}^2  \| \log |\chi|^2 \|_{L^q}\right) X^2,
    \end{equation*}
    so the result is straightforward. We now focus on the case $\lambda >0$. As $\lambda \int_{\Omega} |v|^2 \log |\chi|^2 \dd x \leq 0$, we get
    \begin{equation*}
        \langle A_\kappa v, v \rangle \leq \max(1, \kappa) \|v\|_{H^1}^2.
    \end{equation*}
    Moreover, with the previous estimates and with the same notations, we then get that
    \[   \langle A_\kappa v, v \rangle \geq Y^2 - 2|\lambda| C_{\Omega}^2 Y^{\frac{1}{q}} X^{1 +\frac{1}{p}}  + \left( \kappa -  2|\lambda| C_{\Omega}^2  \| \log |\chi|^2 \|_{L^q}\right) X^2 ,\]
    so taking $\kappa >0$ large enough (with respect to $\lambda$, $\Omega$ and $\chi$), we get the lower bound.
\end{proof}




From now on, the scalar $\kappa$ will be fixed so that the assumptions of Lemma \ref{lemma_positivity_A} hold. We also prove an equivalence result with the $H^2(\Omega)$-norm.

\begin{lemma} \label{lem:equiv_A_H2}
There exists $c_2$, $C_2>0$ such that for all $v \in H^2 (\Omega) \cap H^1_0 (\Omega)$
 \[ \norm{\Delta v}_{L^2} - c_2 \norm{v}_{H^1} \leq \norm{A_\kappa v}_{L^2} \leq C_2 \| v \|_{H^2}.  \]
\end{lemma}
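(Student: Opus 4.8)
The plan is to treat $A_\kappa$ as a bounded perturbation of the Laplacian at the level of the $L^2$ norm, exploiting that the logarithmic potential $V \coloneqq \lambda\log|\chi|^2$ lies in every $L^p(\Omega)$ with $p<\infty$. Writing $A_\kappa v = -\Delta v + V v + \kappa v$, both inequalities will follow from the triangle inequality once the potential term $\norm{V v}_{L^2}$ is controlled by the lower-order norm $\norm{v}_{H^1}$.

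First I would establish the key estimate $\norm{V v}_{L^2} \leq c_0 \norm{v}_{H^1}$ for some $c_0 = c_0(\lambda,\Omega,\chi) > 0$. By Hölder's inequality with exponents $(4,4)$, one has $\norm{V v}_{L^2} \leq \norm{V}_{L^4}\norm{v}_{L^4}$, and $\norm{V}_{L^4} < \infty$ since $\log|\chi|^2$ exhibits only a logarithmic (hence $L^p$ for every finite $p$) singularity at the origin, where $\chi(x) = x$. The factor $\norm{v}_{L^4}$ is then dominated by $\norm{v}_{H^1}$ via the one-dimensional Gagliardo--Nirenberg inequality \eqref{eq_sobolev_inequality_omega}, or equivalently the embedding $H^1(\Omega) \hookrightarrow L^\infty(\Omega) \hookrightarrow L^4(\Omega)$ on the bounded interval $\Omega$.

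For the lower bound, the triangle inequality gives
\[
    \norm{A_\kappa v}_{L^2} \geq \norm{\Delta v}_{L^2} - \norm{V v}_{L^2} - \kappa \norm{v}_{L^2} \geq \norm{\Delta v}_{L^2} - (c_0 + \kappa) \norm{v}_{H^1},
\]
so the left-hand inequality holds with $c_2 \coloneqq c_0 + \kappa$. For the upper bound, again by the triangle inequality,
\[
    \norm{A_\kappa v}_{L^2} \leq \norm{\Delta v}_{L^2} + \norm{V v}_{L^2} + \kappa \norm{v}_{L^2} \leq \norm{\Delta v}_{L^2} + c_0 \norm{v}_{H^1} + \kappa \norm{v}_{L^2},
\]
and since each term on the right is bounded by $\norm{v}_{H^2}$, we obtain the right-hand inequality with $C_2 \coloneqq 1 + c_0 + \kappa$.

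The only delicate point is the integrability of the potential, and even this is routine: the statement reduces to recording that $\log|\chi|^2 \in L^4(\Omega)$, which holds precisely because the first-order cancellation encoded in $\chi$ produces at worst a logarithmic singularity. There is thus no genuine obstacle; this lemma is simply the quantitative companion of the self-adjointness already proved in Lemma \ref{lem:self_adjointness}, and one could alternatively invoke the sharper perturbative bound $\norm{V u}_{L^2} \leq \epsilon \norm{\Delta u}_{L^2} + C(\epsilon)\norm{u}_{L^2}$ established there, though the cruder control by $\norm{v}_{H^1}$ suffices and yields cleaner constants.
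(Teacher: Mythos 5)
Your proof is correct and follows essentially the same route as the paper: both arguments reduce the claim to the triangle inequality plus the bound $\norm{\lambda \log{\abs{\chi}^2}\, v}_{L^2} \leq \norm{\lambda\log{\abs{\chi}^2}}_{L^{2q}} \norm{v}_{L^{2p}} \lesssim \norm{v}_{H^1}$ obtained from Hölder and the Gagliardo--Nirenberg embedding, the only cosmetic difference being your fixed choice of exponents $(4,4)$ versus the paper's generic conjugate pair.
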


\begin{proof}
    By triangular inequality, we get that
    \begin{equation*}
        \left| \norm{A_\kappa v}_{L^2} - \norm{\Delta v}_{L^2} \right| \leq \norm{A_\kappa v + \Delta v}_{L^2} = \norm{\lambda \log{\abs{\chi}^2} v + \kappa v}_{L^2}.
    \end{equation*}
    Similarly as in the previous proof, we get, for some $p, q$ such that $\frac{1}{p} + \frac{1}{q} = 1$,
    \begin{align*}
        \norm{\lambda \log{\abs{\chi}^2} v + \kappa v}_{L^2} &\leq \abs{\lambda} \norm{\log{\abs{\chi}^2} v}_{L^2} + \abs{\kappa} \norm{v}_{L^2} \\
            &\leq \abs{\lambda} \norm{\log{\abs{\chi}^2}}_{L^{2q}} \norm{v}_{L^{2p}} + \kappa \norm{v}_{L^2} \\
            &\leq c_2 (\lambda, \chi) \norm{v}_{H^1} + \kappa \norm{v}_{L^2},
    \end{align*}
    and the result follows.
\end{proof}

Now we turn our attention to the Schrödinger evolution group $e^{-i t A}$.

\begin{proposition} \label{prop:evol_group_schrod_A}
 The semi-group $e^{- i t A}$ commutes with $A$ and $A_\kappa$, and for all $\tau \in \mathbb{R}$ and $\varphi \in H^1_0 (\Omega)$, there holds
        \begin{gather*}
            \norm{e^{- i \tau A} \varphi}_{L^2} = \norm{\varphi}_{L^2}, \\
            \norm{e^{- i \tau A} \varphi}_{H^1} \leq \frac{\sqrt{C_1}}{\sqrt{c_1}} \norm{\varphi}_{H^1}.
        \end{gather*}
        If furthermore $\varphi \in H^2 (\Omega)$,
        \begin{equation*}
            \norm{e^{- i \tau A} \varphi}_{H^2} \leq C_2 \norm{\varphi}_{H^2} + (1 + c_2) \frac{\sqrt{C_1}}{\sqrt{c_1}} \norm{\varphi}_{H^1}.
        \end{equation*}
        Finally, for any $T > 0$ and $w \in \mathcal{C} ((-T,T), H^1_0 (\Omega))$, we have $e^{- i t A} (w (t)) \in \mathcal{C} ((-T,T), H^1_0 (\Omega))$. If moreover $w \in \mathcal{C} ((-T,T), H^1_0 (\Omega) \cap H^2 (\Omega))$, then $e^{- i t A} (w (t)) \in \mathcal{C} ((-T,T), H^1_0 (\Omega) \cap H^2 (\Omega))$.
\end{proposition}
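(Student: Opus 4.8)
The plan is to exploit systematically the self-adjointness and functional-calculus properties of $A_\kappa$ established in Lemmas \ref{lem:self_adjointness}, \ref{lemma_positivity_A} and \ref{lem:equiv_A_H2}, together with the identity $e^{-itA_\kappa} = e^{-it\kappa} e^{-itA}$. Since $e^{-itA}$ is a unitary group on $L^2(\Omega)$ by Stone's theorem, the mass conservation $\norm{e^{-i\tau A}\varphi}_{L^2} = \norm{\varphi}_{L^2}$ is immediate. For the $H^1$ bound, I would first observe that $e^{-i\tau A}$ commutes with $A_\kappa$ (the two operators have the same spectral decomposition, or more simply one uses that the group commutes with its generator $A$ and hence with $A_\kappa = A + \kappa$). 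Consequently, using the quadratic-form equivalence of Lemma \ref{lemma_positivity_A} and the fact that $e^{-i\tau A}$ leaves $D(A_\kappa)$ invariant and preserves the form $\langle A_\kappa \cdot, \cdot\rangle$ (since for $\varphi \in D(A_\kappa)$ one has $\langle A_\kappa e^{-i\tau A}\varphi, e^{-i\tau A}\varphi\rangle = \langle e^{-i\tau A} A_\kappa\varphi, e^{-i\tau A}\varphi\rangle = \langle A_\kappa\varphi, \varphi\rangle$ by unitarity), I would chain the inequalities:
\begin{equation*}
    c_1 \norm{e^{-i\tau A}\varphi}_{H^1}^2 \leq \langle A_\kappa e^{-i\tau A}\varphi, e^{-i\tau A}\varphi\rangle = \langle A_\kappa \varphi, \varphi\rangle \leq C_1 \norm{\varphi}_{H^1}^2,
\end{equation*}
which yields the claimed bound $\norm{e^{-i\tau A}\varphi}_{H^1} \leq \sqrt{C_1/c_1}\,\norm{\varphi}_{H^1}$. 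A density argument extends this from $D(A_\kappa) = H^2\cap H^1_0$ to all of $H^1_0(\Omega)$, the form having been extended there in Lemma \ref{lemma_positivity_A}.

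For the $H^2$ estimate, the key is Lemma \ref{lem:equiv_A_H2}, which compares $\norm{\Delta v}_{L^2}$ with $\norm{A_\kappa v}_{L^2}$. Since $e^{-i\tau A}$ commutes with $A_\kappa$ and is $L^2$-unitary, one has $\norm{A_\kappa e^{-i\tau A}\varphi}_{L^2} = \norm{e^{-i\tau A} A_\kappa\varphi}_{L^2} = \norm{A_\kappa\varphi}_{L^2}$. Then the lower bound of Lemma \ref{lem:equiv_A_H2} applied to $v = e^{-i\tau A}\varphi$ gives
\begin{equation*}
    \norm{\Delta e^{-i\tau A}\varphi}_{L^2} \leq \norm{A_\kappa e^{-i\tau A}\varphi}_{L^2} + c_2 \norm{e^{-i\tau A}\varphi}_{H^1} = \norm{A_\kappa\varphi}_{L^2} + c_2 \norm{e^{-i\tau A}\varphi}_{H^1}.
\end{equation*}
Bounding $\norm{A_\kappa\varphi}_{L^2} \leq C_2\norm{\varphi}_{H^2}$ by the upper bound of Lemma \ref{lem:equiv_A_H2}, and $\norm{e^{-i\tau A}\varphi}_{H^1}$ by the $H^1$ estimate just proved, and recalling that the $H^2$-norm is controlled by $\norm{\Delta\cdot}_{L^2} + \norm{\cdot}_{H^1}$ on $H^2\cap H^1_0$, I would assemble the stated constant $C_2\norm{\varphi}_{H^2} + (1 + c_2)\sqrt{C_1/c_1}\,\norm{\varphi}_{H^1}$.

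Finally, for the continuity-in-time statements, I would treat $t \mapsto e^{-itA}(w(t))$ as a composition and split the increment:
\begin{equation*}
    e^{-itA}w(t) - e^{-it_0 A}w(t_0) = e^{-itA}\bigl(w(t) - w(t_0)\bigr) + \bigl(e^{-itA} - e^{-it_0 A}\bigr)w(t_0).
\end{equation*}
The first term is controlled in $H^1$ (resp. $H^2$) by the operator bounds above applied to $w(t)-w(t_0)$, which tends to zero by continuity of $w$; the second term vanishes by strong continuity of the group $e^{-itA}$ in $H^1_0$ (and in $H^1_0\cap H^2$, using that the group preserves $D(A)$ and is strongly continuous there). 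I anticipate the main technical obstacle to be the strong continuity of the group in the $H^2$ topology for the last claim: strong continuity is guaranteed in $L^2$ by Stone's theorem, and the operator bounds promote this to $H^1$ and $H^2$ for fixed $\tau$, but establishing $\lim_{t\to t_0}\norm{(e^{-itA}-e^{-it_0A})w(t_0)}_{H^2} = 0$ requires knowing that $w(t_0) \in D(A)$ and invoking strong continuity of the group on the graph norm of $A$ (equivalently, that $t\mapsto e^{-itA}\psi$ is continuous into $D(A)$ for $\psi \in D(A)$), which follows from $A$ commuting with its own group but should be stated carefully.
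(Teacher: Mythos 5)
Your proposal is correct and follows essentially the same route as the paper: $L^2$-unitarity from Stone's theorem, the invariance of the quadratic form $\langle A_\kappa\,\cdot,\cdot\rangle$ under the group combined with the equivalence of Lemma \ref{lemma_positivity_A} for the $H^1$ bound, the comparison of $\norm{A_\kappa\,\cdot}_{L^2}$ with $\norm{\Delta\,\cdot}_{L^2}$ from Lemma \ref{lem:equiv_A_H2} for the $H^2$ bound, and strong continuity for the time-dependent statements. If anything, your write-up is slightly more careful than the paper's (which states the $H^1$ chain without squares and is terse on the continuity of $t\mapsto e^{-itA}w(t)$, where your splitting of the increment and the remark on continuity into the graph norm of $A$ are exactly the details one would want spelled out).
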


\begin{proof}
    With Lemma \ref{lem:self_adjointness} and Lemma \ref{lemma_positivity_A}, the first properties follow directly from the general theory of Schrödinger evolution groups, see \cite{Pazy1983}. For the estimates on $e^{- i \tau A} \varphi$, we compute as follows:
    \[  \norm{e^{- i \tau A} \varphi}_{H^1} \leq \frac{1}{\sqrt{c_1}} \langle A e^{- i \tau A} \varphi,e^{- i \tau A} \varphi \rangle   =\frac{1}{\sqrt{c_1}} \langle A  \varphi, \varphi \rangle  \leq \frac{\sqrt{C_1}}{\sqrt{c_1}} \norm{\varphi}_{H^1},\]
    and
    \begin{align*}
        \norm{e^{- i \tau A} \varphi}_{H^2} &\leq \norm{A_\kappa e^{- i \tau A} \varphi}_{L^2} + (1 + c_2) \norm{e^{- i \tau A} \varphi}_{H^1} \\
            &\leq \norm{A_\kappa \varphi}_{L^2} + (1 + c_2) \frac{\sqrt{C_1}}{\sqrt{c_1}} \norm{\varphi}_{H^1} \\
            &\leq C_2 \norm{\varphi}_{H^2} + (1 + c_2) \frac{\sqrt{C_1}}{\sqrt{c_1}} \norm{\varphi}_{H^1}.
    \end{align*}
    The proof for $w$ follows with the exact same lines taking the additional $\mathcal{C}(\left[-T,T\right])$-norm in time in account.
\end{proof}

\section{Preliminary estimates} \label{sec:preliminiray_estimates}

\subsection{Sobolev embeddings and inequalities}

We state some estimates regarding products, logarithms and inverse of functions in Sobolev spaces which will be useful in the sequel. We refer to \cite[Appendix B]{benzonigavage:hal-00693094} for the next two lemmas and their proofs.

\begin{lemma} \label{lem:prod_Sobolev}
    There exists $C > 0$ such that the following properties hold.
    \begin{itemize}
        \item For any $w_j \in H^1 (\Omega)$ for $1 \leq j \leq 4$, there holds
        \begin{gather*}
            \norm{w_1 w_2}_{H^1} \leq C \norm{w_1}_{H^1} \Bigl( \norm{w_2}_{L^\infty} + \norm{\partial_x w_2}_{L^2} \Bigr), \\
            \norm{w_1 w_2 - w_3 w_4}_{H^1} \leq C (\norm{w_2}_{H^1} \norm{w_1 - w_3}_{H^1} + \norm{w_3}_{H^1} \norm{w_2 - w_4}_{H^1}).
        \end{gather*}
        \item For any $w_j \in H^2 (\Omega)$ for $1 \leq j \leq 4$, there holds
        \begin{gather*}
            \norm{w_1 w_2}_{H^2} \leq C (\norm{w_1}_{H^2} \norm{w_2}_{L^\infty} + \norm{w_1}_{H^1} \norm{\partial_x w_2}_{H^1}), \\
            \norm{w_1 w_2 - w_3 w_4}_{H^2} \leq C \Bigl( \norm{w_2}_{H^2} \norm{w_1 - w_3}_{H^2} + \norm{w_3}_{H^2} \norm{w_2 - w_4}_{H^2} \Bigr).
        \end{gather*}
    \end{itemize}
\end{lemma}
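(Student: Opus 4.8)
The plan is to reduce everything to the one-dimensional Sobolev embedding $H^1(\Omega) \hookrightarrow L^\infty(\Omega)$, valid on the bounded interval $\Omega$ and itself a special case of \eqref{eq_sobolev_inequality_omega}, combined with the Leibniz rule and Hölder's inequality. The guiding principle is systematic: in every product of two factors I place the factor carrying the higher-order derivative in $L^2$ and estimate the other factor in $L^\infty$ through the embedding. I would first treat the single-product estimates. For the $H^1$ bound I expand $\partial_x(w_1 w_2) = (\partial_x w_1) w_2 + w_1 (\partial_x w_2)$ and bound $\norm{w_1 w_2}_{L^2} + \norm{(\partial_x w_1) w_2}_{L^2} \leq \norm{w_1}_{H^1}\norm{w_2}_{L^\infty}$ directly by Hölder, while $\norm{w_1(\partial_x w_2)}_{L^2} \leq \norm{w_1}_{L^\infty}\norm{\partial_x w_2}_{L^2} \lesssim \norm{w_1}_{H^1}\norm{\partial_x w_2}_{L^2}$ follows from the embedding; summing the pieces yields the first line.

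For the $H^2$ bound I would further expand $\partial_x^2(w_1 w_2) = (\partial_x^2 w_1) w_2 + 2(\partial_x w_1)(\partial_x w_2) + w_1(\partial_x^2 w_2)$. The terms containing $\partial_x^2 w_1$ or $w_1 w_2$ itself contribute $\norm{w_1}_{H^2}\norm{w_2}_{L^\infty}$ exactly as before, whereas the two remaining terms I route into the second pattern by keeping $w_1$ in $H^1$ and $\partial_x w_2$ in $H^1$: namely $\norm{(\partial_x w_1)(\partial_x w_2)}_{L^2} \leq \norm{\partial_x w_1}_{L^2}\norm{\partial_x w_2}_{L^\infty} \lesssim \norm{w_1}_{H^1}\norm{\partial_x w_2}_{H^1}$ and $\norm{w_1(\partial_x^2 w_2)}_{L^2} \leq \norm{w_1}_{L^\infty}\norm{\partial_x^2 w_2}_{L^2} \lesssim \norm{w_1}_{H^1}\norm{\partial_x w_2}_{H^1}$. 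Adding the contributions gives the third line. The one point requiring care is the asymmetric placement of the $L^\infty$ norm in the cross term $2(\partial_x w_1)(\partial_x w_2)$, so that it lands on the $\norm{w_1}_{H^1}\norm{\partial_x w_2}_{H^1}$ side rather than on $\norm{w_1}_{H^2}\norm{w_2}_{L^\infty}$.

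The two difference estimates then follow from the single-product bounds via the telescoping identity $w_1 w_2 - w_3 w_4 = (w_1 - w_3)w_2 + w_3(w_2 - w_4)$. I would first observe that the first lines already show that $H^1(\Omega)$ and $H^2(\Omega)$ are Banach algebras in one dimension, since $\norm{w_2}_{L^\infty} + \norm{\partial_x w_2}_{L^2} \lesssim \norm{w_2}_{H^1}$ and $\norm{w_2}_{L^\infty} + \norm{\partial_x w_2}_{H^1} \lesssim \norm{w_2}_{H^2}$. Applying the corresponding algebra estimate to each summand gives $\norm{w_1 w_2 - w_3 w_4}_{H^k} \lesssim \norm{w_1 - w_3}_{H^k}\norm{w_2}_{H^k} + \norm{w_3}_{H^k}\norm{w_2 - w_4}_{H^k}$ for $k \in \{1, 2\}$, which is precisely the claimed inequality. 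I do not expect any genuine obstacle here: the content is entirely the tame product rule in one space dimension, and the whole lemma is a bookkeeping exercise once the convention of which factor is measured in $L^\infty$ is fixed.
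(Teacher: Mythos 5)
Your proposal is correct and follows essentially the same route as the paper: the paper simply cites \cite{benzonigavage:hal-00693094} for the two single-product estimates (which you instead derive directly from the Leibniz rule, H\"older's inequality and the embedding $H^1(\Omega) \hookrightarrow L^\infty(\Omega)$), and for the two difference estimates the paper uses exactly your telescoping identity $w_1 w_2 - w_3 w_4 = (w_1 - w_3) w_2 + w_3 (w_2 - w_4)$ together with the algebra property of $H^1$ and $H^2$ in one dimension. Your careful placement of the $L^\infty$ factor in the cross term $2(\partial_x w_1)(\partial_x w_2)$ is the right bookkeeping to land on the asymmetric bound $\norm{w_1}_{H^1} \norm{\partial_x w_2}_{H^1}$, and the argument is complete.
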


\begin{proof}
    The first estimate in both items are classical and can be found in \cite{benzonigavage:hal-00693094}. As for the other two estimates, we decompose
    \begin{equation*}
        w_1 w_2 - w_3 w_4 = (w_1 - w_3) w_2 + w_3 (w_2 - w_4),
    \end{equation*}
    and the conclusion can be easily reached by using algebra properties of $H^1(\Omega)$ and $H^2(\Omega)$ in dimension one.
\end{proof}

Furthermore, we recall the following Moser estimates from \cite{benzonigavage:hal-00693094}:
\begin{lemma} \label{lem:comp_Sobolev}
    Let $I, J$ be two intervals of $\mathbb{R}$ with $\overline{J} \subset \mathring{I}$, and $v$ and $w$ two $J$-valued functions.
    \begin{itemize}
        \item If $F \in W^{1, \infty} (J)$, then
        \begin{equation*}
            \norm{F(w) - F(v)}_{L^2} \leq \norm{F'}_{L^\infty (J)} \norm{w - v}_{L^2}.
        \end{equation*}
        \item If $k \in \mathbb{N}$ and $F \in W^{k+1, \infty} (I)$, then
        \begin{equation*}
            \norm{\partial_x (F (v))}_{H^k} \leq C (1 + \norm{v}_{L^\infty})^{k} \norm{F'}_{W^{k, \infty} (I)} \norm{\partial_x v}_{H^k},
        \end{equation*}
        and, if $k > 0$,
        \begin{multline*}
            \norm{F(w) - F(v)}_{H^k} \leq C \norm{w - v}_{H^k} \\ \times \Bigl( (1 + \norm{w}_{L^\infty} + \norm{v}_{L^\infty})^{k-1} \norm{F'}_{W^{k, \infty} (I)} (\norm{w}_{H^k} + \norm{v}_{H^k}) + \norm{F'}_{L^\infty (J)} \Bigr),
        \end{multline*}
        for some $C$ depending only on $k, I, J$.
    \end{itemize}
\end{lemma}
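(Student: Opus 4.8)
The plan is to establish the three estimates in increasing order of difficulty, with the middle composition (Moser) estimate as the technical core from which the difference estimate follows. For the first bound I would use the fundamental theorem of calculus along the segment joining $v$ and $w$, which stays in $J$ since $J$ is an interval: writing $F(w) - F(v) = (w - v) \int_0^1 F'(v + \sigma (w - v)) \dd \sigma$, the pointwise inequality $\abs{F(w) - F(v)} \leq \norm{F'}_{L^\infty (J)} \abs{w - v}$ is immediate, and squaring and integrating over $\Omega$ yields the claimed $L^2$ estimate.

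For the second estimate, which is the heart of the lemma, I would start from the chain rule $\partial_x (F(v)) = F'(v) \partial_x v$ and control the $H^k$ norm of this product. Expanding each derivative $\partial_x^n (F(v))$ for $1 \leq n \leq k+1$ by the Fa\`a di Bruno formula produces a finite sum of monomials $F^{(m)}(v) \prod_{i=1}^m \partial_x^{a_i} v$ with $m \geq 1$, $a_i \geq 1$ and $\sum_{i=1}^m a_i = n$. Each coefficient $F^{(m)}(v) = (F')^{(m-1)}(v)$ is bounded in $L^\infty$ by $\norm{F'}_{W^{k, \infty} (I)}$ since $m - 1 \leq k$, while the product of derivative factors is placed in $L^2$ using the one-dimensional Gagliardo--Nirenberg interpolation inequality \eqref{eq_sobolev_inequality_omega}: one distributes the total order $n \leq k+1$ among the factors, choosing Hölder exponents $p_i$ with $\sum_i p_i^{-1} = \frac12$, so that the top-order contribution $F'(v) \partial_x^{k+1} v$ is absorbed directly into $\norm{\partial_x v}_{H^k}$ and every lower-order monomial contributes a single factor $\norm{\partial_x v}_{H^k}$ together with a power of $1 + \norm{v}_{L^\infty}$. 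Summing the finitely many monomials gives the stated bound.

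For the third estimate I would reuse the integral representation $F(w) - F(v) = H \, (w - v)$ with $H \coloneqq \int_0^1 F'(v + \sigma (w - v)) \dd \sigma$, and apply a Moser-type tame product estimate in $H^k$, namely $\norm{H (w - v)}_{H^k} \lesssim \norm{H}_{L^\infty} \norm{w - v}_{H^k} + \norm{\partial_x H}_{H^{k-1}} \norm{w - v}_{L^\infty}$, valid for $k \geq 1$. The factor $\norm{H}_{L^\infty} \leq \norm{F'}_{L^\infty (J)}$ is immediate and produces the additive $\norm{F'}_{L^\infty(J)}$ term, while the one-dimensional embedding $\norm{w - v}_{L^\infty} \lesssim \norm{w - v}_{H^k}$ lets me factor out $\norm{w - v}_{H^k}$. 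It then remains to bound $\norm{\partial_x H}_{H^{k-1}}$, which I would obtain by differentiating under the integral sign and applying the second estimate to $F'$ in place of $F$ along the path $v + \sigma (w - v)$; since $F' \in W^{k, \infty}(I)$, this yields the factor $(1 + \norm{v}_{L^\infty} + \norm{w}_{L^\infty})^{k-1} \norm{F'}_{W^{k, \infty} (I)} (\norm{v}_{H^k} + \norm{w}_{H^k})$, which is exactly the remaining bracket.

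The main obstacle is the bookkeeping in the second estimate: one must organize the Fa\`a di Bruno expansion and select the Gagliardo--Nirenberg exponents so that every monomial of derivatives lands in $L^2$ with precisely the advertised dependence on $\norm{\partial_x v}_{H^k}$ and on the power of $1 + \norm{v}_{L^\infty}$, uniformly over the combinatorial structure (the delicate point being that the top-order term leaves no room to spare). Once this composition estimate is secured, the $L^2$ bound and the difference estimate both follow from the elementary integral representation and a tame product inequality, so no genuinely new difficulty arises beyond it.
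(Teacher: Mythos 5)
The paper does not actually prove this lemma: it is quoted verbatim as a known Moser-type composition estimate from Appendix~B of the cited reference \cite{benzonigavage:hal-00693094}, so there is no in-paper argument to compare against. Your proposal supplies the standard proof of exactly these estimates, and it is correct: the pointwise mean-value representation $F(w)-F(v)=(w-v)\int_0^1 F'(v+\sigma(w-v))\dd\sigma$ (legitimate because $J$ is an interval, hence convex) gives the $L^2$ bound; the Fa\`a di Bruno expansion combined with the Gagliardo--Nirenberg interpolation $\norm{\partial_x^{a_i}v}_{L^{2n/a_i}}\lesssim \norm{v}_{L^\infty}^{1-a_i/n}\norm{\partial_x^n v}_{L^2}^{a_i/n}$ and H\"older with $\sum a_i/(2n)=\tfrac12$ yields $\norm{\prod_i\partial_x^{a_i}v}_{L^2}\lesssim\norm{v}_{L^\infty}^{m-1}\norm{\partial_x^n v}_{L^2}$, which is precisely the bookkeeping needed for the tame composition bound; and the difference estimate follows from the tame product inequality applied to $H\,(w-v)$ together with the composition bound applied to $F'\in W^{k,\infty}(I)$ along the path $v+\sigma(w-v)$, which remains $J$-valued. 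Two routine points you leave implicit but should be aware of: the top-order coefficient $F^{(k+1)}(v)$ is only defined almost everywhere for $F\in W^{k+1,\infty}$, so the chain rule at order $k+1$ requires the usual approximation argument; and the constants inherited from Gagliardo--Nirenberg on the bounded domain depend on $\Omega$ as well as on $k,I,J$ (an imprecision already present in the statement, not introduced by you). Neither affects the validity of the argument.
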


We will frequently use estimates in Sobolev spaces for terms of the form $w_1 \log{\abs{w_2}^2}$ and~$\frac{w_1}{\abs{w_2}^2}$ for some functions $w_1$ and $w_2$ such that $w_2$ does not vanish. For this, we will use the following Sobolev estimates on $\log{\abs{w_2}^2}$ and $\frac{1}{\abs{w_2}^2}$, which follow directly from Lemmas \ref{lem:prod_Sobolev} and \ref{lem:comp_Sobolev}. We also introduce a lower bound $m$ to formulate these upcoming estimates. This lower bound will play a central role (appearing later as the constant $\alpha>0$) in controlling from below an integrated quantity related to the solution $u$ of equation \eqref{logNLS}.

\begin{lemma} \label{lem:special_Sobolev}
    Let $v \in H^1(\Omega)$ such that~$\inf \abs{v} \geq m$ for some $m > 0$. Then there holds
    \begin{gather*}
        \norm{\log{\abs{v}^2}}_{L^2} \leq \sqrt{2a} \Bigl( 2 \abs{\log{m^2}} + \abs{\log{\norm{v}_{L^\infty}^2}} \Bigr) \leq \sqrt{2a} \Bigl( \abs{\log{m^2}} + \abs{\log{\norm{v}_{H^1}^2}} + \log{C^2} \Bigr), \\
        \norm{\partial_x \Bigl( \log{\abs{v}^2} \Bigr)}_{L^2} \leq \frac{C}{m^2} \norm{v}_{H^1}^2, \\
        \norm{\frac{1}{\abs{v}^2}}_{L^2} \leq \frac{\sqrt{2a}}{m^2}, \\
        \norm{\partial_x \left( \frac{1}{\abs{v}^2} \right)}_{L^2} \leq \frac{C}{m^4} \norm{v}_{H^1}^2.
    \end{gather*}
    where $C>0$. Moreover, if $v \in H^2 (\Omega)$, then
    \begin{gather*}
        \norm{\partial_x \Bigl( \log{\abs{v}^2} \Bigr)}_{H^1} \leq \frac{C}{m^2} \Bigl( 1 + \frac{\norm{v}_{H^1}^2}{m^2} \Bigr) \norm{v}_{H^1} \norm{v}_{H^2}, \\
        \norm{\partial_x \left( \frac{1}{\abs{v}^2}\right)}_{H^1} \leq \frac{C}{m^4} \left( 1 + \frac{\norm{v}_{H^1}^2}{m^2} \right) \norm{v}_{H^1} \norm{v}_{H^2}.
    \end{gather*}
\end{lemma}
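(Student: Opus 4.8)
The plan is to reduce every bound to the two auxiliary results already at hand: the product estimates of Lemma~\ref{lem:prod_Sobolev} and the Moser (composition) estimates of Lemma~\ref{lem:comp_Sobolev}. Throughout, I would work with the real-valued function $\abs{v}^2 = v \overline{v}$, which lies in $H^1 (\Omega)$ when $v \in H^1 (\Omega)$ (resp. in $H^2 (\Omega)$ when $v \in H^2 (\Omega)$) by the algebra property of these spaces in one dimension, and which satisfies $m^2 \leq \abs{v}^2 \leq \norm{v}_{L^\infty}^2$ pointwise. I would also fix once and for all the one-dimensional Sobolev embedding $\norm{v}_{L^\infty} \leq C \norm{v}_{H^1}$ on the bounded interval $\Omega$. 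The logarithm and the inverse powers are then viewed as compositions $F (\abs{v}^2)$ with $F (s) = \log s$ and $G (s) = s^{-j}$, all smooth on any interval $I$ strictly containing $[m^2, \norm{v}_{L^\infty}^2]$ and bounded away from $0$, so that Lemma~\ref{lem:comp_Sobolev} applies with $\norm{F'}_{L^\infty} \leq m^{-2}$ and $\norm{G'}_{L^\infty} \lesssim m^{-2(j+1)}$.

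For the two pure $L^2$ bounds I would argue pointwise. Since $\log m^2 \leq \log \abs{v (x)}^2 \leq \log \norm{v}_{L^\infty}^2$, one gets $\norm{\log \abs{v}^2}_{L^\infty} \leq \abs{\log m^2} + \abs{\log \norm{v}_{L^\infty}^2}$, and multiplying by $\norm{1}_{L^2 (\Omega)} = \sqrt{2a}$ yields the first displayed inequality; the second follows by inserting $\norm{v}_{L^\infty} \leq C \norm{v}_{H^1}$ together with the lower bound $\norm{v}_{L^\infty} \geq m$. Likewise $\abs{1/\abs{v}^2} \leq m^{-2}$ gives $\norm{1/\abs{v}^2}_{L^2} \leq \sqrt{2a}\, m^{-2}$. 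For the first-derivative $L^2$ bounds I would use the chain rule $\partial_x \abs{v}^2 = 2 \Re (\overline{v}\, \partial_x v)$, whence $\norm{\partial_x \abs{v}^2}_{L^2} \leq 2 \norm{v}_{L^\infty} \norm{\partial_x v}_{L^2} \lesssim \norm{v}_{H^1}^2$; applying the $k = 0$ case of Lemma~\ref{lem:comp_Sobolev} to $F = \log$ (resp. to $G (s) = 1/s$) then produces $\norm{\partial_x (\log \abs{v}^2)}_{L^2} \lesssim m^{-2} \norm{\partial_x \abs{v}^2}_{L^2}$ (resp. $\norm{\partial_x (1/\abs{v}^2)}_{L^2} \lesssim m^{-4} \norm{\partial_x \abs{v}^2}_{L^2}$), that is, the claimed $m^{-2} \norm{v}_{H^1}^2$ and $m^{-4} \norm{v}_{H^1}^2$.

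For the two $H^1$ bounds, now assuming $v \in H^2 (\Omega)$, I would factor the derivatives as products and invoke the first inequality of Lemma~\ref{lem:prod_Sobolev}. Writing $\partial_x (\log \abs{v}^2) = (\partial_x \abs{v}^2) \cdot \abs{v}^{-2}$ and $\partial_x (1/\abs{v}^2) = -(\partial_x \abs{v}^2) \cdot \abs{v}^{-4}$, I take $w_1 = \partial_x \abs{v}^2$ and $w_2 = \abs{v}^{-2}$ (resp. $\abs{v}^{-4}$). The factor $w_1$ is controlled in $H^1$ through the $H^2$ product estimate of Lemma~\ref{lem:prod_Sobolev}, namely $\norm{\partial_x \abs{v}^2}_{H^1} \leq \norm{\abs{v}^2}_{H^2} \lesssim \norm{v}_{H^1} \norm{v}_{H^2}$. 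For $w_2$, the $L^\infty$ norm is at most $m^{-2}$ (resp. $m^{-4}$), while $\norm{\partial_x w_2}_{L^2}$ is given by the already established derivative bounds (with one further application of Lemma~\ref{lem:comp_Sobolev} to handle $\abs{v}^{-4}$), yielding $\norm{\partial_x (1/\abs{v}^2)}_{L^2} \lesssim m^{-4} \norm{v}_{H^1}^2$ and $\norm{\partial_x (1/\abs{v}^4)}_{L^2} \lesssim m^{-6} \norm{v}_{H^1}^2$. Feeding these into $\norm{w_1 w_2}_{H^1} \leq C \norm{w_1}_{H^1} (\norm{w_2}_{L^\infty} + \norm{\partial_x w_2}_{L^2})$ reproduces exactly the factors $\frac{C}{m^2} (1 + \frac{\norm{v}_{H^1}^2}{m^2})$ and $\frac{C}{m^4} (1 + \frac{\norm{v}_{H^1}^2}{m^2})$ multiplying $\norm{v}_{H^1} \norm{v}_{H^2}$.

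The only genuinely delicate point is \emph{bookkeeping} rather than analysis: one must apply Lemma~\ref{lem:comp_Sobolev} to the real scalar function $\abs{v}^2$ (never to the complex $v$), on an interval $I$ chosen to contain $[m^2, \norm{v}_{L^\infty}^2]$ in its interior, so that $s \mapsto \log s$ and $s \mapsto s^{-j}$ are genuinely $W^{k+1, \infty} (I)$ with the advertised control of $\norm{F'}_{W^{k, \infty}}$, and then to track the powers of $m$ through each composition. No Gronwall or fixed-point input is required here; all six estimates are purely algebraic consequences of Lemmas~\ref{lem:prod_Sobolev} and~\ref{lem:comp_Sobolev} combined with the uniform lower bound $\inf \abs{v} \geq m$.
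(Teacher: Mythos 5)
Your proof is correct and follows essentially the same route as the paper's: both reduce every estimate to Lemmas \ref{lem:prod_Sobolev} and \ref{lem:comp_Sobolev} applied to the real-valued function $\abs{v}^2$, using the pointwise bounds $m^2 \leq \abs{v}^2 \leq \norm{v}_{L^\infty}^2$ and the one-dimensional embedding $H^1 (\Omega) \hookrightarrow L^\infty (\Omega)$. The only cosmetic difference is that the paper first normalizes to $v_m = v/m$ so that the Moser estimates are applied on the fixed intervals $J = [1, \infty)$ and $I = [\tfrac{1}{2}, \infty)$ with universal constants, the powers of $m$ reappearing by scaling back, whereas you keep $v$ unnormalized and track the $m$-dependence directly through $\norm{F'}_{W^{k, \infty} (I)}$ (and, for the $H^1$ bounds, use the product estimate on $F' (\abs{v}^2)\, \partial_x \abs{v}^2$ instead of the $k = 1$ Moser estimate); both yield the stated bounds.
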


\begin{proof}
    Let $v_m = \frac{v}{m}$, so that $\abs{v_m} \geq 1$ on $\Omega$. Then $0 \leq \log{\abs{v_m}^2} \leq \log{\norm{v_m}_{L^\infty}^2}$ and we get
    \begin{equation*}
        \norm{\log{\abs{v_m}^2}}_{L^2} \leq \sqrt{2a} \norm{\log{\abs{v_m}^2}}_{L^\infty} \leq \sqrt{2a} \log{\norm{v_m}_{L^\infty}^2} = \sqrt{2a} (\log{\norm{v}_{L^\infty}^2} - \log{m^2}).
    \end{equation*}
    Therefore,
    \begin{align*}
        \norm{\log{\abs{v}^2}}_{L^2} = \norm{\log{\abs{v_m}^2} + \log{m^2}}_{L^2} &\leq \norm{\log{\abs{v_m}^2}}_{L^2} + \sqrt{2a} \abs{\log{m^2}} \\ &\leq \sqrt{2a} \Bigl( 2 \abs{\log{m^2}} + \abs{\log{\norm{v}_{L^\infty}^2}} \Bigr).
    \end{align*}
    On the other hand, by the continuous embedding of $L^\infty(\Omega)$ in $H^1 (\Omega)$, we have $\norm{v_m}_{L^\infty} \leq C \norm{v_m}_{H^1}$. Since $\norm{v_m}_{L^\infty} \geq 1$, we thus get
    \begin{equation*}
        \log{\norm{v_m}_{L^\infty}^2} \leq \log{C^2 \norm{v_m}_{H^1}^2} = \log{\norm{v_m}_{H^1}^2} + \log{C^2},
    \end{equation*}
    and the conclusion follows in the same way.

    For the second estimate, we see that $\partial_x \Bigl( \log{\abs{v}^2} \Bigr) = \partial_x \Bigl( \log{\abs{v_m}^2} \Bigr)$. Then, using Lemma~\ref{lem:comp_Sobolev} on $F(\abs{v_m}^2)$ with $F = \log$, $J = [1, \infty)$ and $I = [\frac{1}{2}, \infty)$, we get, for some $C > 0$,
    \begin{equation*}
        \norm{\partial_x \Bigl( \log{\abs{v_m}^2} \Bigr)}_{L^2} \leq C \norm{\partial_x \abs{v_m}^2}_{L^2},
    \end{equation*}
    but also, if $v \in H^2$,
    \begin{equation*}
        \norm{\partial_x \Bigl( \log{\abs{v_m}^2} \Bigr)}_{H^1} \leq C \Bigl( 1 + \norm{\abs{v_m}^2}_{L^\infty} \Bigr) \norm{\partial_x \abs{v_m}^2}_{H^1}.
    \end{equation*}
    With the definition of $v_m$ to come back to estimates on $v$, and by using Lemma \ref{lem:prod_Sobolev} on $\abs{v_m}^2 = v_m \overline{v_m}$, we get the result for both cases. The inequalities for $\frac{1}{\abs{v}^2}$ follow the same lines.
\end{proof}

We also have the following estimates concerning differences between such nonlinear terms:

\begin{lemma} \label{lem:diff_special_Sobolev}
    There exists $C > 0$ such that the following holds.
    Let $k \in \{ 1, 2 \}$ and $v, w \in H^k(\Omega)$ such that $\inf \abs{v} \geq m$ and $\inf \abs{w} \geq m$ for some $m > 0$, then
    \begin{multline*}
        \norm{\log{\abs{w}^2} - \log{\abs{v}^2}}_{H^k} \leq \frac{C}{m^2} \norm{w - v}_{H^k} (\norm{w}_{H^k} + \norm{v}_{H^k}) \\ \times \biggl( 1 + \Bigl(1 + \frac{\norm{w}_{H^1}^2 + \norm{v}_{H^1}^2}{m^2} \Bigr)^{k-1} \frac{\norm{w}_{H^1} \norm{w}_{H^k} + \norm{v}_{H^1} \norm{v}_{H^k}}{m^2} \biggl),
    \end{multline*}
    and
    \begin{multline*}
        \norm{\frac{1}{\abs{w}^2} - \frac{1}{\abs{v}^2}}_{H^k} \leq \frac{C}{m^4} \norm{w - v}_{H^k} (\norm{w}_{H^k} + \norm{v}_{H^k}) \\ \times \biggl( 1 + \Bigl(1 + \frac{\norm{w}_{H^1}^2 + \norm{v}_{H^1}^2}{m^2} \Bigr)^{k-1} \frac{\norm{w}_{H^1} \norm{w}_{H^k} + \norm{v}_{H^1} \norm{v}_{H^k}}{m^2} \biggl).
    \end{multline*}
\end{lemma}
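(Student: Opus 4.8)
The plan is to deduce both inequalities from the Moser composition estimate of Lemma~\ref{lem:comp_Sobolev} after a rescaling that normalizes the lower bound $m$. I set $v_m = v / m$ and $w_m = w / m$, so that $\abs{v_m} \geq 1$ and $\abs{w_m} \geq 1$ pointwise on $\Omega$; since $\Omega$ is a bounded interval, $H^k (\Omega)$ is an algebra, so $\abs{w_m}^2 = w_m \overline{w_m}$ and $\abs{v_m}^2$ are $[1, \infty)$-valued functions of $H^k$. The key algebraic observation is that the additive constant $\log{m^2}$ cancels in the logarithmic difference, so that $\log{\abs{w}^2} - \log{\abs{v}^2} = \log{\abs{w_m}^2} - \log{\abs{v_m}^2}$, whereas the multiplicative constant factors out of the inverse difference, giving $\frac{1}{\abs{w}^2} - \frac{1}{\abs{v}^2} = \frac{1}{m^2} \bigl( \frac{1}{\abs{w_m}^2} - \frac{1}{\abs{v_m}^2} \bigr)$. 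This is exactly why the two final bounds differ only through the prefactor $\frac{1}{m^2}$ versus $\frac{1}{m^4}$.

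Next I would apply the difference estimate in the second item of Lemma~\ref{lem:comp_Sobolev} (valid for $k > 0$, hence for $k \in \{ 1, 2 \}$) with $F = \log$ (resp.\ $F (y) = 1/y$), the two arguments being $\abs{w_m}^2$ and $\abs{v_m}^2$, and with $J = [1, \infty)$ and $I = [\tfrac{1}{2}, \infty)$. Only the derivative $F'$ enters the estimate, and $F' (y) = 1/y$ (resp.\ $-1/y^2$) together with all its derivatives is bounded on $[\tfrac{1}{2}, \infty)$, so that $\norm{F'}_{L^\infty (J)}$ and $\norm{F'}_{W^{k, \infty} (I)}$ are both $O (1)$. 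This yields
\begin{equation*}
    \norm{F (\abs{w_m}^2) - F (\abs{v_m}^2)}_{H^k} \leq C \norm{\abs{w_m}^2 - \abs{v_m}^2}_{H^k} \Bigl( (1 + \norm{\abs{w_m}^2}_{L^\infty} + \norm{\abs{v_m}^2}_{L^\infty})^{k-1} (\norm{\abs{w_m}^2}_{H^k} + \norm{\abs{v_m}^2}_{H^k}) + 1 \Bigr).
\end{equation*}

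It then remains to bound the three quantities on the right by Lemma~\ref{lem:prod_Sobolev}. The difference is controlled by the difference product inequality, $\norm{\abs{w_m}^2 - \abs{v_m}^2}_{H^k} \leq C (\norm{w_m}_{H^k} + \norm{v_m}_{H^k}) \norm{w_m - v_m}_{H^k}$; the $L^\infty$ terms by the embedding $H^1 \hookrightarrow L^\infty$, giving $\norm{\abs{w_m}^2}_{L^\infty} \lesssim \norm{w_m}_{H^1}^2$; and the $H^k$ norms by the first product inequality of the relevant item, which for $k = 2$ yields the sharper bound $\norm{\abs{w_m}^2}_{H^2} \lesssim \norm{w_m}_{H^1} \norm{w_m}_{H^2}$ rather than $\norm{w_m}_{H^2}^2$ (and $\norm{\abs{w_m}^2}_{H^1} \lesssim \norm{w_m}_{H^1}^2$ for $k = 1$). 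This refinement is precisely what produces the factor $\norm{w}_{H^1} \norm{w}_{H^k} + \norm{v}_{H^1} \norm{v}_{H^k}$ in the statement. Finally I would undo the rescaling through $\norm{w_m}_{H^j} = m^{-1} \norm{w}_{H^j}$ and $\norm{w_m - v_m}_{H^j} = m^{-1} \norm{w - v}_{H^j}$: the difference product inequality contributes $m^{-2}$, the interior powers contribute a further $m^{-2 (k-1)}$ and $m^{-2}$ inside the bracket, and the inverse case carries its extra $m^{-2}$, which altogether reproduces the stated prefactors $\frac{C}{m^2}$ and $\frac{C}{m^4}$ and the exact algebraic shape of both bounds.

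The only genuinely delicate point is the bookkeeping of the powers of $m$, namely checking that the rescaled bracket $\bigl( 1 + \frac{\norm{w}_{H^1}^2 + \norm{v}_{H^1}^2}{m^2} \bigr)^{k-1}$ and the term $\frac{\norm{w}_{H^1} \norm{w}_{H^k} + \norm{v}_{H^1} \norm{v}_{H^k}}{m^2}$ emerge with exactly the right normalization. This hinges on using the sharp form of the product estimate of Lemma~\ref{lem:prod_Sobolev} for $\norm{\abs{w_m}^2}_{H^k}$ rather than the crude algebra bound; everything else is a routine combination of the two cited lemmas.
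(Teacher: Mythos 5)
Your proof is correct and follows essentially the same route as the paper: rescale to $v_m = v/m$, $w_m = w/m$, apply the Moser difference estimate of Lemma~\ref{lem:comp_Sobolev} to $\abs{w_m}^2, \abs{v_m}^2$ with $F = \log$ or $F(y) = 1/y$ on $J = [1, \infty)$, $I = [\tfrac12, \infty)$, control $\abs{w_m}^2 - \abs{v_m}^2$ and the $H^k$ norms of $\abs{w_m}^2$ via Lemma~\ref{lem:prod_Sobolev}, and undo the rescaling. Your additional remarks (the cancellation of the additive constant $\log m^2$ versus the factoring out of $m^{-2}$, and the use of the sharp product bound $\norm{\abs{w_m}^2}_{H^k} \lesssim \norm{w_m}_{H^1} \norm{w_m}_{H^k}$) correctly account for the exact shape of the stated bounds.
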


\begin{proof}
    Similarly as in the previous proof, we introduce $v_m = \frac{v}{m}$ and $w_m = \frac{w}{m}$. The conclusion follows from applying Lemma \ref{lem:comp_Sobolev} to $\abs{v_m}^2$ and $\abs{w_m}^2$ with either $F = \log$ or $F (x) = \frac{1}{x}$ on the intervals $J = [1, \infty)$ and $I = [\frac{1}{2}, \infty)$, then using Lemma \ref{lem:prod_Sobolev} along with
    \begin{equation*}
        \abs{w_m}^2 - \abs{v_m}^2 = w_m (\overline{w_m - v_m}) + \overline{v_m} (w_m - v_m),
    \end{equation*}
    and lastly use the relation between $v_m, w_m$ and $v, w$ to come back onto estimates onto $v$ and $w$ only.
\end{proof}

\begin{lemma} \label{lem:computations_Sobolev}
    Let $m, R > 0$ and $k \in \{1, 2\}$. Then there exists $C = C_{m, R, k} > 0$ such that the following estimates hold: for any $w_j \in H^k (\Omega)$ such that $\inf{\abs{w_2}} \geq m$, $\inf{\abs{w_4}} \geq m$ and $\norm{w_j}_{H^k} \leq R$ for $1 \leq j \leq 4$, 
    \begin{gather*}
        \norm{\frac{w_1}{\abs{w_2}^2}}_{H^k} \leq C, \\
        \norm{\frac{w_1}{\abs{w_2}^2} - \frac{w_3}{\abs{w_4}^2}}_{H^k} \leq C (\norm{w_1 - w_3}_{H^k} + \norm{w_2 - w_4}_{H^k}), \\
        \norm{w_1 \log{\abs{w_2}^2}}_{H^k} \leq C, \\
        \norm{w_1 \log{\abs{w_2}^2} - w_3 \log{\abs{w_4}^2}}_{H^k} \leq C (\norm{w_1 - w_3}_{H^k} + \norm{w_2 - w_4}_{H^k}).
    \end{gather*}
\end{lemma}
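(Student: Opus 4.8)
The plan is to assemble all four inequalities directly from the product estimates of Lemma \ref{lem:prod_Sobolev}, the single-function bounds of Lemma \ref{lem:special_Sobolev}, and the difference bounds of Lemma \ref{lem:diff_special_Sobolev}, exploiting throughout the fact that in dimension one $H^k(\Omega)$ is a Banach algebra for $k \in \{1,2\}$. Indeed, combining the first inequality in each item of Lemma \ref{lem:prod_Sobolev} with the embedding $L^\infty(\Omega) \hookrightarrow H^1(\Omega)$ produces a constant $C > 0$ such that $\norm{fg}_{H^k} \leq C \norm{f}_{H^k} \norm{g}_{H^k}$ for all $f, g \in H^k(\Omega)$, $k \in \{1,2\}$. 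Since the hypotheses supply the uniform control $\norm{w_j}_{H^k} \leq R$ together with the lower bounds $\inf\abs{w_2}, \inf\abs{w_4} \geq m$, every norm produced by the auxiliary lemmas is bounded by a quantity depending only on $m$, $R$ and $k$, which will furnish the constant $C_{m,R,k}$.

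For the two boundedness statements, I would write $\frac{w_1}{\abs{w_2}^2} = w_1 \cdot \frac{1}{\abs{w_2}^2}$ and $w_1 \log\abs{w_2}^2 = w_1 \cdot \log\abs{w_2}^2$, then apply the algebra estimate. The factor $\norm{w_1}_{H^k}$ is bounded by $R$, whereas $\norm{\frac{1}{\abs{w_2}^2}}_{H^k}$ and $\norm{\log\abs{w_2}^2}_{H^k}$ are controlled by Lemma \ref{lem:special_Sobolev}: the relevant right-hand sides there involve only $m$, $\norm{w_2}_{H^1}$ and (for $k=2$) $\norm{w_2}_{H^2}$, each of which is at most $R$. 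This yields the first and third inequalities at once.

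For the two difference statements, the key algebraic step is the splitting
\begin{equation*}
    \frac{w_1}{\abs{w_2}^2} - \frac{w_3}{\abs{w_4}^2} = (w_1 - w_3)\frac{1}{\abs{w_2}^2} + w_3\left(\frac{1}{\abs{w_2}^2} - \frac{1}{\abs{w_4}^2}\right),
\end{equation*}
together with the analogue $w_1 \log\abs{w_2}^2 - w_3 \log\abs{w_4}^2 = (w_1 - w_3)\log\abs{w_2}^2 + w_3\bigl(\log\abs{w_2}^2 - \log\abs{w_4}^2\bigr)$. Applying the algebra estimate term by term, the first summand is bounded by $C\norm{w_1 - w_3}_{H^k}\norm{\frac{1}{\abs{w_2}^2}}_{H^k}$ (respectively with $\log\abs{w_2}^2$), the trailing factor being controlled by Lemma \ref{lem:special_Sobolev}. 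For the second summand I would bound $\norm{w_3}_{H^k} \leq R$ and invoke Lemma \ref{lem:diff_special_Sobolev}, which gives $\norm{\frac{1}{\abs{w_2}^2} - \frac{1}{\abs{w_4}^2}}_{H^k} \leq \frac{C}{m^4}\norm{w_2 - w_4}_{H^k}(\dots)$, where the parenthetical factor depends only on $m$ and on the $H^1$- and $H^k$-norms of $w_2, w_4$, hence only on $m$ and $R$; the logarithmic difference is handled identically. Collecting the two contributions yields the claimed Lipschitz-type bounds.

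There is no genuine analytic difficulty here: the proof is essentially bookkeeping on top of the three preceding lemmas. The only points deserving attention are the uniform absorption of every $m$- and $R$-dependent quantity into a single constant $C_{m,R,k}$, and the verification that the $H^k$-algebra estimate is legitimately available for both $k=1$ and $k=2$ in one space dimension, which is precisely what the first inequalities of Lemma \ref{lem:prod_Sobolev} provide.
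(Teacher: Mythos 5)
Your proof is correct and follows essentially the same route as the paper: the identical splittings of the two differences, with Lemmas \ref{lem:prod_Sobolev}, \ref{lem:special_Sobolev} and \ref{lem:diff_special_Sobolev} doing all the work and the constants absorbed via $\norm{w_j}_{H^k} \leq R$ and the lower bound $m$. (One trivial slip: the embedding you need is $H^1(\Omega) \hookrightarrow L^\infty(\Omega)$, not the reverse.)
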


\begin{proof}
    The first and third estimates are direct applications from Lemmas \ref{lem:prod_Sobolev} and \ref{lem:special_Sobolev}.
    For the other two estimates, we point out that
    \begin{gather*}
        \frac{w_1}{\abs{w_2}^2} - \frac{w_3}{\abs{w_4}^2} = \frac{1}{\abs{w_2}^2} (w_1 - w_3) + w_3 \Bigl( \frac{1}{\abs{w_2}^2} - \frac{1}{\abs{w_4}^2} \Bigr), \\
        w_1 \log{\abs{w_2}^2} - w_3 \log{\abs{w_4}^2} = \log{\abs{w_2}^2} (w_1 - w_3) + w_3 \Bigl( \log{\abs{w_2}^2} - \log{\abs{w_4}^2} \Bigr).
    \end{gather*}
    Using triangular inequality, the conclusion then comes from a straightforward application of Lemmas \ref{lem:prod_Sobolev} and \ref{lem:diff_special_Sobolev}.
\end{proof}

\subsection{A decomposition through Taylor formula}

We define the following linear application:
\begin{equation}
    \begin{array}{cccc}
        \mathcal{V}\;: & L^\infty (\Omega) & \longrightarrow & L^\infty (\Omega) \\
        \; & f & \longmapsto & \mathcal{V} [f] : (x \mapsto \int_0^1 f (\sigma x) \dd \sigma).
    \end{array}
\end{equation}
Once again, we begin with a formal discussion on the quantity at hand to provide the reader with the underlying intuition for its introduction. Instead of analyzing equation~\eqref{logNLS} directly, we follow an approach similar to that used in the proof of Theorem~\ref{th:non-prop_regularity} in Section~\ref{sec:non_propagation}, focusing instead on the spatial derivative~$\partial_x u$. This derivative satisfies a non-autonomous equation obtained by differentiating~\eqref{logNLS} with respect to space. To recast this equation in autonomous form, we replace~$u$ with the integrated quantity~$\mathcal{V}\left[ \partial_x u \right]$, thanks to the symmetric properties of~$u$. A precise understanding of how this integrated term behaves in Sobolev spaces is thus essential, and constitutes the main goal of this section.

It is direct to see that:
\begin{lemma} \label{lem:mcalV_Linfty}
    The application $\mathcal{V}$ is a continuous automorphism, with continuity constant $1$.
    It is also a continuous automorphism of $\mathcal{C} ([-a, a])$, with the same continuity constant.
    Moreover, if $f = f_0$ is constant, then $\mathcal{V} [f_0] = f_0$.
\end{lemma}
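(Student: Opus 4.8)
The plan is to read off every assertion directly from the defining integral, doing the $L^\infty$ bound first and then upgrading to continuity on $\mathcal{C}([-a,a])$. Since $\Omega = (-a,a)$ is an interval containing the origin, for every $x \in \Omega$ the whole segment $\{ \sigma x : \sigma \in [0,1] \}$ lies in $\Omega$; thus $\sigma \mapsto f(\sigma x)$ is, for a.e. $x$ (by Fubini), a bounded measurable function of $\sigma$, so that $\mathcal{V}[f](x) = \int_0^1 f(\sigma x) \dd\sigma$ is well defined, with linearity in $f$ immediate. The continuity estimate is then the one-line bound
\[ \abs{\mathcal{V}[f](x)} \leq \int_0^1 \abs{f(\sigma x)} \dd\sigma \leq \norm{f}_{L^\infty (\Omega)}, \]
valid for a.e. $x \in \Omega$, which yields $\norm{\mathcal{V}[f]}_{L^\infty} \leq \norm{f}_{L^\infty}$, hence continuity with constant at most $1$.

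Next I would fix the constant to exactly $1$ and dispatch the last claim at the same time: if $f \equiv f_0$ is constant, then $\mathcal{V}[f_0](x) = \int_0^1 f_0 \dd\sigma = f_0$, so $\mathcal{V}$ restricts to the identity on constants. This proves $\mathcal{V}[f_0] = f_0$ and simultaneously shows that the operator norm cannot be smaller than $1$, so it is exactly $1$. For the $\mathcal{C}([-a,a])$ statement the same sup-norm bound applies verbatim, so it only remains to check that $\mathcal{V}[f]$ is continuous whenever $f$ is. Here the change of variables $t = \sigma x$ gives, for $x \neq 0$, the representation $\mathcal{V}[f](x) = \frac{1}{x} \int_0^x f(t) \dd t$, which is continuous on $[-a,a] \setminus \{ 0 \}$ and, by continuity of $f$ together with the fundamental theorem of calculus, tends to $f(0) = \mathcal{V}[f](0)$ as $x \to 0$; hence $\mathcal{V}[f] \in \mathcal{C}([-a,a])$ with the same continuity constant.

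The genuinely delicate point, and the one I expect to require the most care, is the bijectivity hidden in the word \emph{automorphism}. Injectivity is quick: $\mathcal{V}[f] = 0$ forces $\int_0^x f = 0$ for all $x$, whence $f = 0$ almost everywhere by Lebesgue differentiation. The inverse is read off the same representation: from $x\, \mathcal{V}[f](x) = \int_0^x f(t)\dd t$ one recovers $f = \partial_x \bigl( x\, \mathcal{V}[f](x) \bigr) = \mathcal{V}[f] + x\, \partial_x \mathcal{V}[f]$, the first-order operator inverting $\mathcal{V}$. Since this inverse is a differential operator, it must be understood on the function class on which it naturally acts, which is precisely the structure exploited later through the identity $u(x) = x\, \mathcal{V}[\partial_x u](x)$ valid for odd $u$ vanishing at the origin; getting this identification correct is the crux of the statement, the remaining bounds being entirely routine.
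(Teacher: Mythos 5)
The paper offers no written proof of this lemma (it is introduced with ``It is direct to see that''), and the routine parts of your argument --- the bound $\norm{\mathcal{V}[f]}_{L^\infty} \leq \norm{f}_{L^\infty}$, the fact that constants are fixed (hence the constant is exactly $1$), and the preservation of continuity via $\mathcal{V}[f](x) = \frac{1}{x}\int_0^x f(t)\dd t$ --- are exactly what is intended and are correct. These are also the only properties of the lemma actually invoked later (e.g.\ in Lemmas \ref{lem:equiv_mcalD}, \ref{lemma_minoration_dx_u} and \ref{lem:est_Vcal_bfb}).

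The genuine gap is in the part you yourself flag as delicate: surjectivity. Your injectivity argument is fine, but the proposed inverse $g \mapsto g + x\,\partial_x g$ (equivalently, equation \eqref{eq:rel_mcalVf_f}) only recovers $f$ from $\mathcal{V}[f]$ on classes where that derivative makes sense and lands back in the space; it does not show that every $g$ in $L^\infty(\Omega)$ or $\mathcal{C}([-a,a])$ is attained. In fact it is not: since $x \mapsto \int_0^x f$ is Lipschitz for $f \in L^\infty$, every function in the range of $\mathcal{V}$ is locally Lipschitz away from the origin, so $g = \mathbf{1}_{[1,2]}$ is not of the form $\mathcal{V}[f]$ with $f \in L^\infty(\Omega)$; similarly $g(x) = \abs{x-1}$ is continuous but $x g(x)$ is not $\mathcal{C}^1$ at $x=1$, so $g \notin \mathcal{V}[\mathcal{C}([-a,a])]$. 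The word ``automorphism'' in the statement must therefore be read as ``continuous linear map of the space into itself'' (an endomorphism); your attempt to literally establish bijectivity cannot be completed, and the sentence ``the inverse \ldots must be understood on the function class on which it naturally acts'' papers over this. The correct conclusion is: $\mathcal{V}$ is a bounded, injective, norm-one endomorphism of $L^\infty(\Omega)$ and of $\mathcal{C}([-a,a])$, which is all the sequel requires.
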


Going further, we develop the properties of $\mathcal{V}$ regarding Sobolev spaces.
\begin{lemma} \label{lem:V_sobolev}
We have:
    \begin{itemize}
        \item For any $f \in H^1 (\Omega)$, $\partial_x \mathcal{V} [f]$ and $\partial_{xx} \mathcal{V} [f]$ are well defined almost everywhere and there hold
        \begin{gather}
            \norm{\partial_x \mathcal{V} [f]}_{L^2} \leq \frac{\sqrt{2}}{2} \norm{\partial_x f}_{L^2}, \notag \\
            x \partial_x \mathcal{V} [f] = f - \mathcal{V} [f] \label{eq:rel_mcalVf_f}.
        \end{gather}
        \item For any $f \in H^2 (\Omega)$, there also holds
        \begin{equation*}
            \norm{\partial_{xx} \mathcal{V} [f]}_{L^2} \leq \frac{1}{2} \norm{\partial_{xx} f}_{L^2}.
        \end{equation*}
    \end{itemize}

\end{lemma}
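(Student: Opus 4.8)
The plan is to reduce everything to an explicit pointwise formula for the derivatives of $\mathcal{V}[f]$. For $f \in \mathcal{C}^1(\overline{\Omega})$, differentiating under the integral sign gives
$$\partial_x \mathcal{V}[f](x) = \int_0^1 \sigma\, f'(\sigma x) \dd \sigma,$$
and, when $f \in \mathcal{C}^2(\overline{\Omega})$, $\partial_{xx}\mathcal{V}[f](x) = \int_0^1 \sigma^2 f''(\sigma x)\dd\sigma$. The identity \eqref{eq:rel_mcalVf_f} then follows from a single integration by parts in $\sigma$: writing $\sigma x\, f'(\sigma x) = \sigma\,\partial_\sigma\bigl(f(\sigma x)\bigr)$ and integrating, one finds $x\partial_x\mathcal{V}[f](x) = f(x) - \int_0^1 f(\sigma x)\dd\sigma = f(x) - \mathcal{V}[f](x)$. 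Equivalently, the change of variables $y=\sigma x$ yields the representation $\mathcal{V}[f](x) = \frac1x\int_0^x f$ for $x\neq 0$, from which both formulas are immediate.

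For the $L^2$ bounds I would apply Cauchy--Schwarz in $\sigma$ with a carefully chosen splitting of the weight. For the first derivative, writing $\sigma = \sqrt\sigma\cdot\sqrt\sigma$ gives
$$\abs{\partial_x \mathcal{V}[f](x)}^2 \leq \Bigl(\int_0^1 \sigma\dd\sigma\Bigr)\Bigl(\int_0^1 \sigma\abs{f'(\sigma x)}^2\dd\sigma\Bigr) = \frac12 \int_0^1 \sigma\abs{f'(\sigma x)}^2\dd\sigma.$$
Integrating in $x$, using Fubini and the change of variables $y=\sigma x$ (licit since $\sigma\Omega\subseteq\Omega$ for $\sigma\in(0,1]$), the surviving weight $\sigma$ cancels the Jacobian $\sigma^{-1}$ and one is left with $\int_0^1\dd\sigma \cdot \norm{f'}_{L^2}^2$, producing exactly the constant $\frac{\sqrt2}{2}$. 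The second-derivative bound is obtained identically, but with the splitting $\sigma^2 = \sigma^{3/2}\cdot\sigma^{1/2}$, so that after Cauchy--Schwarz the remaining weight $\sigma$ again cancels the Jacobian; here $\int_0^1\sigma^3\dd\sigma = \frac14$ yields the constant $\frac12$.

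To upgrade these from smooth $f$ to $f\in H^k(\Omega)$, I would use density of $\mathcal{C}^\infty(\overline{\Omega})$ together with linearity of $\mathcal{V}$: taking $f_n\to f$ in $H^k$, the estimates applied to $f_n-f_m$ show that $\partial_x\mathcal{V}[f_n]$ (resp. $\partial_{xx}\mathcal{V}[f_n]$) is Cauchy in $L^2$, while $\mathcal{V}[f_n]\to\mathcal{V}[f]$ uniformly by Lemma \ref{lem:mcalV_Linfty} and the embedding $H^1\hookrightarrow \mathcal{C}(\overline{\Omega})$; closedness of the weak derivative then identifies the limits and passes the estimates to $f$. The relation \eqref{eq:rel_mcalVf_f} survives the limit since multiplication by the bounded function $x$ is continuous on $L^2(\Omega)$. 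The a.e. well-definedness of $\partial_{xx}\mathcal{V}[f]$ for merely $f\in H^1$ follows by differentiating the relation once more: away from $x=0$ one obtains $\partial_{xx}\mathcal{V}[f] = \bigl(f' - 2\partial_x\mathcal{V}[f]\bigr)/x$, whose right-hand side is defined for a.e. $x\neq 0$ (and $\{0\}$ is negligible).

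I expect the only genuinely delicate point to be the rigorous handling of the singularity at $x=0$ and the justification of the pointwise formulas for non-smooth $f$: the estimates themselves are routine once the weight splitting is found, so the real care goes into the density argument and into checking that $\partial_x\mathcal{V}[f]$ and $\partial_{xx}\mathcal{V}[f]$ are the genuine (weak, resp. a.e.\ pointwise) derivatives rather than merely formal expressions.
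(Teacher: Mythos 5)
Your proposal is correct and follows essentially the same route as the paper: explicit differentiation under the integral for smooth $f$, Cauchy--Schwarz combined with the change of variables $y=\sigma x$ for the $L^2$ bounds (your weight splittings yield the same constants $\tfrac{\sqrt2}{2}$ and $\tfrac12$), integration by parts in $\sigma$ for the identity \eqref{eq:rel_mcalVf_f}, and a density argument to pass to $H^k$. Your treatment of the density step and of the a.e.\ well-definedness is in fact slightly more detailed than the paper's, which simply invokes density of $\mathcal{C}^2$ in $H^1$ and $H^2$.
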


\begin{proof}
    Let $f \in \mathcal{C}^2 ([-a, a])$. Then it is easy to prove that $\mathcal{V} [f]$ is $\mathcal{C}^2$ on $\Omega$ and that, for all $x \in \Omega$,
    \begin{gather*}
        \partial_x \mathcal{V} [f] (x) = \int_0^1 \sigma \partial_x f (\sigma x) \dd \sigma, \\
        \partial_{xx} \mathcal{V} [f] (x) = \int_0^1 \sigma^2 \partial_{xx} f (\sigma x) \dd \sigma.
    \end{gather*}
    Thus, by Cauchy-Schwarz inequality, we get
    \begin{align*}
        \int_{-a}^a (\partial_x \mathcal{V} [f] (x))^2 \dd x &\leq \int_{-a}^a \int_0^1 \sigma^2 (\partial_x f (\sigma x))^2 \dd \sigma \dd x \\
            &\leq \int_0^1 \sigma^2 \int_{-a}^a (\partial_x f (\sigma x))^2 \dd x \dd \sigma \\
            &\leq \int_0^1 \sigma \int_{-\sigma a}^{\sigma a} (\partial_x f (y))^2 \dd y \dd \sigma \\
            &\leq \int_0^1 \sigma \norm{\partial_x f}_{L^2}^2 \dd \sigma = \frac{1}{2} \norm{\partial_x f}_{L^2}^2.
    \end{align*}
    Similarly, we have
    \begin{align*}
        \int_{-a}^a (\partial_{xx} \mathcal{V} [f] (x))^2 \dd x &\leq \int_{-a}^a \int_0^1 \sigma^4 (\partial_{xx} f (\sigma x))^2 \dd \sigma \dd x \\
            &\leq \int_0^1 \sigma^4 \int_{-a}^a (\partial_{xx} f (\sigma x))^2 \dd x \dd \sigma \\
            &\leq \int_0^1 \sigma^3 \int_{-\sigma a}^{\sigma a} (\partial_{xx} f (y))^2 \dd y \dd \sigma \\
            &\leq \int_0^1 \sigma^3 \norm{\partial_{xx} f}_{L^2}^2 \dd \sigma = \frac{1}{4} \norm{\partial_{xx} f}_{L^2}^2.
    \end{align*}
    
    Moreover, there holds
    \begin{align*}
        x \partial_{x} \mathcal{V} [f] (x) &= \int_0^1 \sigma \frac{\partial}{\partial \sigma} \Bigl( f (\sigma x) \Bigr) \dd \sigma \\
            &= f (x) - \int_0^1 f (\sigma x) \dd \sigma \\
            &= f (x) - \mathcal{V} [f] (x).
    \end{align*}
    By density of~$\mathcal{C}^2 (\Omega)$ in $H^1 (\Omega)$ and in $H^2 (\Omega)$, we get the conclusion.
\end{proof}

From the equalities of the previous lemma, we can also derive some additional estimates.

\begin{corollary} \label{cor:xV_sobolev}
 For any $f \in H^1 (\Omega)$, there holds
        \begin{equation*}
            \norm{\mathcal{V} [f]}_{H^1} \leq C_{\Omega, a} \norm{f}_{H^1},
        \end{equation*}
        for $C_{\Omega, a} \coloneqq \sqrt{2a C_\Omega + 2^{- \frac{1}{2}}}$. 
         Moreover, for any $f \in H^2 (\Omega)$, there holds
        \begin{equation*}
            \norm{\mathcal{V} [f]}_{H^2} \leq \left(C_{\Omega, a} + \frac{1}{2} \right) \norm{f}_{H^2}.
        \end{equation*}

\end{corollary}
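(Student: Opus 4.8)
The plan is to prove the two inequalities separately, in each case decomposing the target Sobolev norm into pieces that are already controlled by Lemma~\ref{lem:V_sobolev} and Lemma~\ref{lem:mcalV_Linfty}, and then bounding whatever is left by hand.

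For the $H^1$ bound I would start from $\norm{\mathcal{V}[f]}_{H^1}^2 = \norm{\mathcal{V}[f]}_{L^2}^2 + \norm{\partial_x \mathcal{V}[f]}_{L^2}^2$ and treat the two summands independently. The derivative term is immediate from Lemma~\ref{lem:V_sobolev}: since $\norm{\partial_x \mathcal{V}[f]}_{L^2} \leq \frac{\sqrt{2}}{2} \norm{\partial_x f}_{L^2}$, we get $\norm{\partial_x \mathcal{V}[f]}_{L^2}^2 \leq \frac{1}{2} \norm{\partial_x f}_{L^2}^2 \leq 2^{-\frac12} \norm{f}_{H^1}^2$. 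The genuinely new term is $\norm{\mathcal{V}[f]}_{L^2}$, which is not directly supplied by the scaling estimates. Here I would pass through the $L^\infty$ norm and use the boundedness of $\Omega = (-a,a)$: by Lemma~\ref{lem:mcalV_Linfty}, $\mathcal{V}$ is a continuous automorphism of $\mathcal{C}([-a,a])$ with constant $1$, so $\norm{\mathcal{V}[f]}_{L^\infty} \leq \norm{f}_{L^\infty}$, whence $\norm{\mathcal{V}[f]}_{L^2}^2 \leq 2a \norm{\mathcal{V}[f]}_{L^\infty}^2 \leq 2a \norm{f}_{L^\infty}^2$. Invoking the Sobolev embedding $H^1(\Omega) \hookrightarrow L^\infty(\Omega)$ in the form $\norm{f}_{L^\infty}^2 \leq C_\Omega \norm{f}_{H^1}^2$ coming from \eqref{eq_sobolev_inequality_omega} then yields $\norm{\mathcal{V}[f]}_{L^2}^2 \leq 2a C_\Omega \norm{f}_{H^1}^2$. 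Adding the two contributions gives $\norm{\mathcal{V}[f]}_{H^1}^2 \leq (2a C_\Omega + 2^{-\frac12}) \norm{f}_{H^1}^2 = C_{\Omega,a}^2 \norm{f}_{H^1}^2$, which is the claim after taking square roots.

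For the $H^2$ bound I would exploit subadditivity, namely $\norm{\mathcal{V}[f]}_{H^2} \leq \norm{\mathcal{V}[f]}_{H^1} + \norm{\partial_{xx} \mathcal{V}[f]}_{L^2}$ (using $\sqrt{\alpha^2 + \beta^2} \leq \alpha + \beta$ for $\alpha, \beta \geq 0$). The first term is bounded by $C_{\Omega,a} \norm{f}_{H^1} \leq C_{\Omega,a} \norm{f}_{H^2}$ by the $H^1$ estimate just established, while the second is controlled by the second-derivative estimate of Lemma~\ref{lem:V_sobolev}, $\norm{\partial_{xx} \mathcal{V}[f]}_{L^2} \leq \frac{1}{2} \norm{\partial_{xx} f}_{L^2} \leq \frac{1}{2} \norm{f}_{H^2}$. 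Summing the two produces the announced constant $C_{\Omega,a} + \frac{1}{2}$.

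The entire argument is mechanical once Lemma~\ref{lem:V_sobolev} is at hand, with one exception that I expect to be the only delicate point: the control of $\norm{\mathcal{V}[f]}_{L^2}$. The naive attempt to reproduce the scaling/Cauchy–Schwarz computation used for the derivatives fails here, because it leads to the non-integrable weight $\int_0^1 \sigma^{-1} \dd \sigma$ (the extra powers of $\sigma$ produced by differentiating $f(\sigma x)$ are precisely what made the derivative estimates converge). This is exactly why one must detour through the $L^\infty$ norm, and it is this step — valid only because $\Omega$ is bounded — that forces the geometric factor $2a$ and the embedding constant $C_\Omega$ into $C_{\Omega,a}$; it is the step I would track most carefully when pinning down the exact constant.
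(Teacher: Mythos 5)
Your proposal is correct and is exactly the derivation the paper leaves implicit: the corollary is stated without proof as a consequence of Lemma~\ref{lem:V_sobolev}, and the form of the constant $C_{\Omega,a}=\sqrt{2aC_\Omega+2^{-\frac12}}$ confirms that the intended route for the $L^2$ piece is the detour $\norm{\mathcal{V}[f]}_{L^2}^2\leq 2a\norm{\mathcal{V}[f]}_{L^\infty}^2\leq 2a\norm{f}_{L^\infty}^2\leq 2aC_\Omega\norm{f}_{H^1}^2$ via Lemma~\ref{lem:mcalV_Linfty} and the Sobolev embedding, precisely as you do (and your remark that the direct scaling computation fails because of the non-integrable $\sigma^{-1}$ weight is the right reason this detour is needed). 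The $H^2$ part by subadditivity together with the $\frac12\norm{\partial_{xx}f}_{L^2}$ bound reproduces the stated constant exactly.
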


We also provide an integral representation formula for functions vanishing at the origin, which will prove particularly useful in the following analysis.

\begin{lemma} \label{lemma_integral_u}
    Let $\phi \in H^2 (\Omega)$ such that $\phi (0) = 0$, then for all $x \in \Omega$,
    \begin{equation*}
        \phi(x) = x \, \mathcal{V} [\partial_x \phi] (x).
    \end{equation*}
\end{lemma}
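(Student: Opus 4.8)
The plan is to recognize this identity as nothing more than the fundamental theorem of calculus applied along the ray $\sigma \mapsto \sigma x$, combined with the hypothesis $\phi(0)=0$. First I would record the regularity input: in dimension one we have the Sobolev embedding $H^2(\Omega) \hookrightarrow \mathcal{C}^1(\overline{\Omega})$, so $\phi$ is genuinely $\mathcal{C}^1$ up to the boundary and all the pointwise manipulations below are classical (no distributional subtleties arise). This is also what makes $\mathcal{V}[\partial_x \phi]$ well defined pointwise and continuous, by Lemma~\ref{lem:mcalV_Linfty}.

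The core computation is a one-line chain-rule identity. For fixed $x \in \Omega$, the map $\sigma \mapsto \phi(\sigma x)$ is $\mathcal{C}^1$ on $[0,1]$ with derivative $\frac{\mathrm{d}}{\mathrm{d}\sigma}\phi(\sigma x) = x\,\partial_x\phi(\sigma x)$. Integrating over $\sigma \in [0,1]$ and pulling the factor $x$ (which is constant in $\sigma$) outside the integral yields
\begin{equation*}
    x\,\mathcal{V}[\partial_x \phi](x) = x \int_0^1 \partial_x \phi(\sigma x)\dd\sigma = \int_0^1 \frac{\mathrm{d}}{\mathrm{d}\sigma}\phi(\sigma x)\dd\sigma = \phi(x) - \phi(0) = \phi(x),
\end{equation*}
where the last equality uses the assumption $\phi(0) = 0$. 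This already gives the claimed formula for every $x \in \Omega$, including $x = 0$ where both sides vanish.

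Alternatively, and perhaps more in the spirit of the preceding material, I could deduce the identity from the relation \eqref{eq:rel_mcalVf_f} of Lemma~\ref{lem:V_sobolev}. Setting $g(x) \coloneqq x\,\mathcal{V}[\partial_x\phi](x)$, one has $g(0)=0 = \phi(0)$, and differentiating together with \eqref{eq:rel_mcalVf_f} applied to $f = \partial_x\phi$ gives $\partial_x g = \mathcal{V}[\partial_x\phi] + x\,\partial_x\mathcal{V}[\partial_x\phi] = \mathcal{V}[\partial_x\phi] + (\partial_x\phi - \mathcal{V}[\partial_x\phi]) = \partial_x\phi$; since $g$ and $\phi$ share the same derivative and the same value at the origin, they coincide. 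I do not anticipate any genuine obstacle here: the only point requiring a word of justification is the regularity of $\phi$, and this is immediate from the one-dimensional embedding $H^2 \hookrightarrow \mathcal{C}^1$.
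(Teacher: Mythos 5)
Your proof is correct and essentially identical to the paper's: the fundamental theorem of calculus applied to $\sigma \mapsto \phi(\sigma x)$ is the same one-line computation as the paper's change of variables $y = \sigma x$ in $x\int_0^1 \partial_x\phi(\sigma x)\dd\sigma = \int_0^x \partial_x\phi(y)\dd y = \phi(x)-\phi(0)$. The added regularity remark and the alternative derivation via \eqref{eq:rel_mcalVf_f} are fine but not needed.
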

\begin{proof}
    This formula follows directly from the definition of $\mathcal{V}$ via a straightforward change of variables:
    \[ x \mathcal{V} \left[ \partial_x \phi \right](x)=x \int_0^1 \partial_x \phi(\sigma x) \dd \sigma = \int_0^x \partial_x\phi(y) \dd y = \phi(x)-\phi(0) = \phi(x)   \]
    as $\phi(0)=0$.
\end{proof}

This formula also provides a more compact and equivalent characterization of functions in $\mathcal{D}$, that we state and prove in the following lemma

\begin{lemma} \label{lem:equiv_mcalD}
    Let $\phi \in H^2 (\Omega)$ be an odd function. Then the assertion $\phi \in \mathcal{D}$ is equivalent to the following property:
    \begin{equation} \label{assertion:mcalV_bbb}
        \text{There exists } \alpha > 0 \text{ such that } \abs{\mathcal{V} [\partial_x \phi] (x)} \geq \alpha \text{ for all } x \in [-a, a].
    \end{equation}
\end{lemma}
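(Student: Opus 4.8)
The plan is to reduce both defining conditions of $\mathcal{D}$ to a single statement about the zeros of the continuous function $g := \mathcal{V}[\partial_x\phi]$ on the compact interval $[-a,a]$, using the integral representation of Lemma \ref{lemma_integral_u}. The starting point is that $\phi$ odd gives $\phi(0)=0$, so Lemma \ref{lemma_integral_u} yields the pointwise identity $\phi(x)=x\,g(x)$ for all $x\in[-a,a]$. Since $\phi\in H^2(\Omega)$ one has $\partial_x\phi\in H^1(\Omega)\hookrightarrow\mathcal{C}([-a,a])$, and Lemma \ref{lem:mcalV_Linfty} then gives $g\in\mathcal{C}([-a,a])$. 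I would also record the value at the origin, $g(0)=\int_0^1\partial_x\phi(0)\dd\sigma=\partial_x\phi(0)$, which will carry the information about the first-order cancellation.

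For the forward implication, assuming $\phi\in\mathcal{D}$, I would show that $g$ is nowhere vanishing on $[-a,a]$: for $x\neq 0$ the identity $\phi(x)=x\,g(x)$ together with $\phi(x)\neq 0$ forces $g(x)\neq 0$, while at the origin $g(0)=\partial_x\phi(0)\neq 0$ by definition of $\mathcal{D}$. A compactness argument then finishes this direction: $\abs{g}$ is continuous on the compact set $[-a,a]$, hence attains its minimum, which is necessarily strictly positive, so that $\alpha:=\min_{[-a,a]}\abs{g}>0$ gives exactly \eqref{assertion:mcalV_bbb}.

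For the converse, assuming \eqref{assertion:mcalV_bbb}, I would read off that $g$ never vanishes since $\abs{g}\geq\alpha>0$: evaluating at $0$ recovers $\partial_x\phi(0)=g(0)\neq 0$, and for $x\neq 0$ the factorisation $\phi(x)=x\,g(x)$ with both factors nonzero recovers $\phi(x)\neq 0$. Together with the standing assumptions that $\phi$ is odd and satisfies the Neumann condition, that is $\phi\in D_{\Neu}(\Delta)$ (the boundary condition entering the definition of $\mathcal{D}$ on both sides of the equivalence), this yields $\phi\in\mathcal{D}$.

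I do not expect any serious obstacle here. The only point requiring care is the degeneracy of the factor $x$ at the origin, where $\phi(x)=x\,g(x)$ carries no information about $g(0)$; this is precisely why $x=0$ must be handled separately, through the identity $g(0)=\partial_x\phi(0)$, so that the nonvanishing of $g$ at the origin is equivalent to the condition $\partial_x\phi(0)\neq 0$. The upgrade from "$g$ nowhere zero" to the uniform bound "$\abs{g}\geq\alpha$" is exactly the compactness of $[-a,a]$, the feature that makes the bounded-domain setting essential.
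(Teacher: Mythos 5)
Your proof is correct and follows essentially the same route as the paper: continuity of $\mathcal{V}[\partial_x\phi]$, the identity $\phi(x)=x\,\mathcal{V}[\partial_x\phi](x)$ from Lemma \ref{lemma_integral_u}, and the evaluation $\mathcal{V}[\partial_x\phi](0)=\partial_x\phi(0)$. The only (harmless) difference is that you obtain the uniform bound $\alpha$ in one stroke by minimizing the nowhere-vanishing continuous function $\abs{g}$ over the compact interval, whereas the paper splits $[-a,a]$ into a neighborhood of the origin and its complement; your version is slightly more streamlined but not a different argument.
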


\begin{proof}
    Let $\phi \in H^2 (\Omega)$ be an odd function and let us prove this equivalence.
    
    Assume that $\phi \in \mathcal{D}$. Since $\partial_x \phi \in H^1 (\Omega) \subset \mathcal{C} ([-a, a])$, Lemma \ref{lem:mcalV_Linfty} implies that $\mathcal{V} [\partial_x \phi] \in \mathcal{C} ([-a, a])$. Moreover, we have
    \begin{equation*}
        \mathcal{V} [\partial_x \phi] (0) = \partial_x \phi (0) \neq 0.
    \end{equation*}
    Therefore, we can find some $\delta > 0$ such that, for all $x \in (- \delta, \delta)$, there holds
    \begin{equation*}
        \abs{\mathcal{V} [\partial_x \phi] (x)} \geq \frac{1}{2} \abs{\partial_x \phi (0)} > 0.
    \end{equation*}
    On the other hand, $[-a , a] \setminus (- \delta, \delta)$ is compact, so that $\abs{\varphi}$ (which is continuous) reaches its minimum $\gamma$, which is not zero by assumption. Then, using the formula of Lemma \ref{lemma_integral_u}, we get that for all $x \in [-a , a] \setminus (- \delta, \delta)$,
    \begin{equation*}
        \abs{\mathcal{V} [\partial_x \phi] (x)} = \frac{\abs{\phi (x)}}{\abs{x}} \geq \frac{\gamma}{a},
    \end{equation*}
    leading to the property \eqref{assertion:mcalV_bbb} with $\alpha = \min{(\frac{1}{2} \abs{\partial_x \phi (0)}, \frac{\gamma}{a})}$.

    Conversely, if \eqref{assertion:mcalV_bbb} holds, it is direct to see that $\phi \in \mathcal{D}$ from Lemma \ref{lemma_integral_u} (showing that $\varphi$ cannot vanish except in $x = 0$) and from $\partial_x \phi (0) = \mathcal{V} [\partial_x \phi] (0)$.
\end{proof}

\begin{lemma} \label{lemma_minoration_dx_u}
Let $u \in \mathcal{C} ((-T,T); H^2 (\Omega))$ such that $\restriction{u}{t = 0} \in \mathcal{D}$ and for all $t \in (-T,T)$, the function $t\mapsto u(t)$ is odd. Let $v= \partial_x u$, then for $T_0>0$ small enough (in particular~$T_0 \leq T$), there exists a constant $\alpha>0$ such that, for all $t \in \left(- T_0,T_0\right)$, 
\[ \left| \mathcal{V} [v (t)] \right| \geq \alpha >0.  \]
\end{lemma}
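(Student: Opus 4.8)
The plan is to reduce everything to a continuity-in-time argument in $L^\infty$, using the equivalent characterization of $\mathcal{D}$ from Lemma~\ref{lem:equiv_mcalD} to secure the bound at the initial time and then propagating it for short times by continuity.

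First, I would record the bound at $t = 0$. Since $\restriction{u}{t=0} \in \mathcal{D}$ is an odd function in $H^2(\Omega)$, Lemma~\ref{lem:equiv_mcalD} provides a constant $\alpha_0 > 0$ such that
\[
    \abs{\mathcal{V} [v (0)] (x)} = \abs{\mathcal{V} [\partial_x u (0)] (x)} \geq \alpha_0
    \qquad \text{for all } x \in [-a, a].
\]
Next, I would establish that $t \mapsto \mathcal{V} [v (t)]$ is continuous from $(-T, T)$ into $\mathcal{C} ([-a, a])$ (equivalently into $L^\infty (\Omega)$). Indeed, the assumption $u \in \mathcal{C} ((-T, T); H^2 (\Omega))$ gives $v = \partial_x u \in \mathcal{C} ((-T, T); H^1 (\Omega))$. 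By Corollary~\ref{cor:xV_sobolev}, $\mathcal{V}$ is a bounded linear operator on $H^1 (\Omega)$, so $t \mapsto \mathcal{V} [v (t)]$ is continuous with values in $H^1 (\Omega)$; composing with the one-dimensional Sobolev embedding $H^1 (\Omega) \hookrightarrow \mathcal{C} ([-a, a])$ then yields the desired continuity in $L^\infty$.

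Finally, by this $L^\infty$-continuity at $t = 0$, there exists $T_0 \in (0, T]$ such that $\norm{\mathcal{V} [v (t)] - \mathcal{V} [v (0)]}_{L^\infty} \leq \frac{\alpha_0}{2}$ for all $t \in (-T_0, T_0)$. A triangle inequality then gives, for every such $t$ and every $x \in [-a, a]$,
\[
    \abs{\mathcal{V} [v (t)] (x)} \geq \abs{\mathcal{V} [v (0)] (x)} - \norm{\mathcal{V} [v (t)] - \mathcal{V} [v (0)]}_{L^\infty} \geq \alpha_0 - \frac{\alpha_0}{2} = \frac{\alpha_0}{2},
\]
so the conclusion holds with $\alpha = \frac{\alpha_0}{2}$.

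The only step that genuinely uses the machinery of this section is the $L^\infty$-continuity of $t \mapsto \mathcal{V} [v (t)]$, which rests on the boundedness of $\mathcal{V}$ on $H^1 (\Omega)$ from Corollary~\ref{cor:xV_sobolev} together with the one-dimensional Sobolev embedding; the remainder is a routine triangle-inequality argument. I do not anticipate any serious obstacle, and I note that the oddness hypothesis for $t \neq 0$ is in fact not needed for the conclusion: only the initial membership $\restriction{u}{t=0} \in \mathcal{D}$ (used once, to invoke Lemma~\ref{lem:equiv_mcalD}) plays a role.
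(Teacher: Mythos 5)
Your proposal is correct and follows essentially the same route as the paper: secure the lower bound at $t=0$ via Lemma~\ref{lem:equiv_mcalD}, then propagate it for short times by continuity of $t \mapsto \mathcal{V}[v(t)]$ in $L^\infty$ and a triangle inequality. The only cosmetic difference is that the paper bounds $\abs{\mathcal{V}[v(t)-v(0)]}$ by $\norm{v(t)-v(0)}_{L^\infty}$ using the $L^\infty$-contractivity of $\mathcal{V}$ from Lemma~\ref{lem:mcalV_Linfty} (after the Sobolev embedding on $v$), whereas you pass through the $H^1$-boundedness of $\mathcal{V}$ from Corollary~\ref{cor:xV_sobolev} first; your side remark that oddness is only needed at $t=0$ is also accurate.
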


\begin{proof}
    From Lemma \ref{lem:equiv_mcalD}, we know that there exists $\alpha > 0$ such that
    \begin{equation*}
        \abs{\mathcal{V} [\restriction{v}{t=0}]} \geq \alpha \qquad \text{on } \Omega.
    \end{equation*}
    On the other hand, from the regularity assumption on $u$ and Sobolev embedding, there holds~$u \in \mathcal{C} ([- T, T]; \mathcal{C}^1 ([-a, a]))$, so that there exists $T_0 \leq T$ such that, for all $t \in (-T_0, T_0)$,
    \begin{equation*}
        \norm{v (t) - \restriction{v}{t=0}}_{L^\infty} < \frac{\alpha}{2}.
    \end{equation*}
    Therefore, we also get for all $t \in (-T_0, T_0)$
    \begin{equation*}
        \abs{\mathcal{V} [v (t)]} \geq \abs{\mathcal{V} [\restriction{v}{t=0}]} - \abs{\mathcal{V} [v (t) - \restriction{v}{t=0}]} \geq \alpha - \norm{v (t) - \restriction{v}{t=0}} \geq \alpha - \frac{\alpha}{2} = \frac{\alpha}{2} > 0. \qedhere
    \end{equation*}
\end{proof}

 Finally,  we introduce the following function, which will be extensively used in the next section:
\begin{equation} \label{def:calX}
    \mathfrak{X} \coloneqq \frac{\chi}{x}.
\end{equation}
%
A direct property satisfied by $\mathfrak{X}$ is the following:

\begin{lemma} \label{lem:prop_mcalX}
    The function $\mathfrak{X}$ is $\mathcal{C}^\infty (\mathbb{R})$ and satisfies $\mathfrak{X} \geq C_\chi$ on $\mathbb{R}$ for some constant $C_\chi > 0$. Moreover, for any $y > 2$
    \begin{equation*}
        y \frac{\mathfrak{X}' (y)}{\mathfrak{X} (y)} = - 1.
    \end{equation*}
\end{lemma}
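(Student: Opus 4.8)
The plan is to establish the three assertions separately, exploiting that $\chi$ is given by explicit elementary expressions both near the origin (where $\chi(x)=x$) and on $\{|x|\ge 2\}$ (where $\chi=\pm 1$), so that $\mathfrak{X}=\chi/x$ is governed there by trivial formulas; only the transition zones $\tfrac12\le |x|\le 2$ call for a soft compactness argument. For the smoothness, away from the origin $\mathfrak{X}$ is the quotient of the smooth function $\chi$ by the smooth non-vanishing function $x\mapsto x$, hence $\mathcal{C}^\infty$ on $\mathbb{R}\setminus\{0\}$. The only point to verify is $x=0$, and here I would use the precise local form $\chi(x)=x$ on $(-\tfrac12,\tfrac12)$: this forces $\mathfrak{X}(x)=1$ for every $x\in(-\tfrac12,\tfrac12)\setminus\{0\}$, so the apparent singularity is removable and $\mathfrak{X}$ extends to the constant $1$ on $(-\tfrac12,\tfrac12)$, which is trivially $\mathcal{C}^\infty$. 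Patching this local description with the one on $\mathbb{R}\setminus\{0\}$ yields $\mathfrak{X}\in\mathcal{C}^\infty(\mathbb{R})$.

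For the positivity I would first record the sign of $\chi$, taking $\chi$ to be the non-decreasing function depicted in Figure~\ref{fig:function_chi}. Then $\chi(x)=x>0$ for $x\in(0,\tfrac12]$ and $\chi(x)\ge\chi(\tfrac12)=\tfrac12>0$ for $x\ge\tfrac12$, and symmetrically $\chi(x)<0$ for $x<0$; thus $\chi(x)$ and $x$ have the same sign, so $\mathfrak{X}(x)=\chi(x)/x>0$ for all $x\ne 0$, while $\mathfrak{X}(0)=1>0$. Since $\mathfrak{X}$ is continuous and strictly positive, its restriction to the compact working domain $[-a,a]$ (recall $a\ge 2$) attains a strictly positive minimum, which furnishes the constant $C_\chi>0$; note that $\mathfrak{X}(x)=1/|x|\to 0$ as $|x|\to\infty$, so this lower bound is genuinely the one on the bounded domain on which $\mathfrak{X}$ is subsequently used.

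Finally, the differential identity is immediate: for $y>2$ one has $\chi(y)=1$ and $\chi'(y)=0$, hence $\mathfrak{X}(y)=1/y$ and $\mathfrak{X}'(y)=-1/y^2$, so that $y\,\mathfrak{X}'(y)/\mathfrak{X}(y)=y\cdot(-1/y^2)\cdot y=-1$. Equivalently, from $\mathfrak{X}'=(x\chi'-\chi)/x^2$ one obtains the general relation $y\,\mathfrak{X}'(y)/\mathfrak{X}(y)=y\chi'(y)/\chi(y)-1$, which reduces to $-1$ precisely where $\chi'=0$, i.e. on $\{y>2\}$. The only genuinely delicate point in the whole argument is the smoothness at the origin, where one must invoke the exact local form $\chi(x)=x$ rather than the mere vanishing $\chi(0)=0$; similarly the positivity relies on the explicit monotone choice of $\chi$, since the bare constraints $|\chi|\le 1$ and $|\chi'|\le 1$ alone would permit $\chi$ to vanish at some $x>0$ and thereby destroy the lower bound.
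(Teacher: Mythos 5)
Your proof is correct and complete; the paper states this lemma without proof (it is introduced as a ``direct property''), so there is nothing to compare against, and your argument — removability of the singularity at the origin via $\chi(x)=x$ on $(-\tfrac12,\tfrac12)$, sign agreement of $\chi(x)$ and $x$, and the explicit computation for $y>2$ — is exactly the intended one. You also correctly spot that the bound $\mathfrak{X}\geq C_\chi$ cannot hold on all of $\mathbb{R}$ as literally stated, since $\mathfrak{X}(y)=1/y\to 0$ as $y\to\infty$; the statement should read ``on $[-a,a]$'' (or on any compact set), which is indeed how it is used later, e.g.\ in Lemma~\ref{lem:Vj_H10}. Your further remark that strict positivity on the transition zone $\tfrac12\le|x|\le 2$ requires the monotone choice of $\chi$ from Figure~\ref{fig:function_chi} (the bare constraints $|\chi|\le1$, $|\chi'|\le1$ would allow $\chi(1)=0$) is also a legitimate point that the paper leaves implicit.
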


\section{Fixed-point argument} \label{sec:fixed_point}

We turn to the proof of Theorem \ref{theo:propagation}, gathering properties from Section \ref{sec:_toy_model} on the linear toy model and from Section \ref{sec:preliminiray_estimates} on estimates on Sobolev spaces.
For some initial data $\varphi \in \mathcal{D} \cap H^3 (\Omega)$ where $\Omega = (-a, a)$, Lemma \ref{lem:Cauchy} gives a unique solution $u \in \mathcal{C} (\R, W_2 (\Omega))$ to \eqref{logNLS}. Moreover, from Lemma \ref{lemma_odd}, $u(t)$ is odd for all $t \in \R$.

We first rewrite equation~\eqref{logNLS} as 
\[ i \partial_t u = A u + \lambda u \log \left( \frac{|u|^2}{|\chi|^2} \right).  \]
Using Lemma \ref{lemma_integral_u} and $\mathfrak{X}$ defined in \eqref{def:calX}, we know that
\begin{equation} \label{eq:relation_u_V}
    \frac{u (t)}{\chi} = \frac{\mathcal{V} [\partial_x u (t)]}{\mathfrak{X}}.
\end{equation}
Thus, assuming that $\abs{\mathcal{V} [\partial_x u (t)]} \geq \alpha$ for some $\alpha > 0$ on $(- T, T)$ for some $T > 0$ given by Lemma \ref{lemma_minoration_dx_u}, we obtain the equation
\begin{equation*}
    i \partial_t u = A u + \lambda u \log{\abs{\mathcal{V} [\partial_x u (t)]}^2} - 2 \lambda u \log \mathfrak{X}.
\end{equation*}
Differentiating with respect to $x$, and denoting $v=\partial_x u$, we get
\begin{equation*}
    i \partial_t v = A v + 2 \lambda u \frac{\chi'}{\chi} + \lambda v \log{\abs{\mathcal{V} [v(t)]}^2} + \lambda u \partial_x \Bigl( \log{\abs{\mathcal{V} [v(t)]}^2} \Bigr) - 2 \lambda v \log \mathfrak{X} - 2 \lambda u \frac{\mathfrak{X}'}{\mathfrak{X}}.
\end{equation*}
Using once again Lemma \ref{lemma_integral_u} and equation \eqref{eq:relation_u_V}, and also Lemma \ref{lemma_integral_u} after expanding the fourth term, we get
\begin{multline*}
     i \partial_t v = A v + 2 \lambda \mathcal{V} [v(t)] \frac{\chi'}{\mathfrak{X}} + \lambda v \log{\abs{\mathcal{V} [v(t)]}^2} + 2 \lambda x \mathcal{V} [v(t)] \frac{\Re \Bigl( \partial_x \mathcal{V} [v(t)] \, \overline{\mathcal{V} [v(t)]} \Bigr)}{\abs{\mathcal{V} [v(t)]}^2} \\ - 2 \lambda v \log \mathfrak{X} - 2 \lambda x \mathcal{V} [v (t)] \frac{\mathfrak{X}'}{\mathfrak{X}}.
\end{multline*}
Now, we expand even more the fourth term with equality \eqref{eq:rel_mcalVf_f} from Lemma \ref{lem:V_sobolev}, which yields
\begin{multline*}
     i \partial_t v = A v + 2 \lambda \mathcal{V} [v(t)] \frac{\chi'}{\mathfrak{X}} + \lambda v \log{\abs{\mathcal{V} [v(t)]}^2} + 2 \lambda \mathcal{V} [v(t)] \frac{\Re \Bigl( v (t) \, \overline{\mathcal{V} [v(t)]} \Bigr)}{\abs{\mathcal{V} [v(t)]}^2} \\ - 2 \lambda \left(1 + x \frac{\mathfrak{X}'}{\mathfrak{X}} \right) \mathcal{V} [v(t)] - 2 \lambda v \log \mathfrak{X}.
\end{multline*}

We now define the following quantities
\begin{align*}
    V_1 [f] &\coloneqq 2 \lambda \mathcal{V} [f] \Bigl( \frac{\chi'}{\mathfrak{X}} - 1 -  x \frac{\mathfrak{X}'}{\mathfrak{X}} \Bigr), \\
    V_2 [f] &\coloneqq \lambda f \log{\abs{\mathcal{V} [f]}^2}, \\
    V_3 [f] &\coloneqq 2 \lambda \mathcal{V} [f] \frac{\Re \Bigl( f \, \overline{\mathcal{V} [f]} \Bigr)}{\abs{\mathcal{V} [f]}^2}, \\
    V_4 [f] &\coloneqq - 2 \lambda f \log \mathfrak{X},
\end{align*}
so that the previous equation can be rewritten
\begin{equation} \label{eq:v}
    i \partial_t v = A v + V_1 [v] + V_2 [v] + V_3 [v] + V_4 [v].
\end{equation}
Moreover, $v$ satisfies the following initial data and Dirichlet boundary conditions:
\begin{equation*}
    \restriction{v}{t = 0} = \partial_x \varphi, \qquad v(t,-a) = v(t,a) = 0.
\end{equation*}

\begin{remark} \label{rem:not_regu_comp}
    Essentially, our computations make use of the fact that we can decompose the logarithmic nonlinearity into two separate cases:
    \begin{equation*}
        u \log{\abs{u}^2} = u \log{\abs{x}^2} + u \log{\abs{\mathcal{V} [\partial_x u]}^2}.
    \end{equation*}
    The first part is linear in $u$, and thus it is treated directly in the operator $A$ as a potential (up to a localization, making this potential compactly supported), whereas the second term is treated as a new nonlinear term, depending on $\partial_x u$ but enjoying favorable properties, especially after spatial differentiation. \\
   These computations explain why a regularized version of the equation is not suitable for such analysis: with a saturated logarithm ($\log{(\abs{u}^2 + \varepsilon)}$ or so), this separation of terms is no longer feasible.
    More importantly, several new terms involving $\varepsilon$ appear, which cannot be bounded uniformly as $\varepsilon \to 0$.
\end{remark}

%

\subsection{Properties of the $V_i$}





We first focus our attention on the $V_i$ terms. Using once again the fact that $\mathfrak{X}$ is $\mathcal{C}^2$ and bounded away from $0$, we can show the following properties as a straightforward consequence from the Lemmas \ref{lem:prod_Sobolev}, \ref{lem:special_Sobolev}, \ref{lem:diff_special_Sobolev} and Corollary \ref{cor:xV_sobolev}:
\begin{lemma} \label{lem:est_Vj_Sobolev}
    Let $R, \alpha > 0$. There exists $C_{R, \alpha, 1}, C_{R, \alpha, 2} > 0$ such that the following holds.
    \begin{itemize}
        \item For $k \in \{ 1, 2\}$ and for any $f_\ell \in H^k (\Omega)$ ($\ell \in \{ 1, 2 \}$) such that $\norm{f_\ell}_{H^1} \leq R$ and $\abs{\mathcal{V} [f_\ell]} \geq \alpha$, there holds for all $1 \leq j \leq 4$
        \begin{gather*}
            \norm{V_j [f_\ell]}_{H^k} \leq C_{R, \alpha, 1} \norm{f_\ell}_{H^k}, \\
            \norm{V_j [f_1] - V_j [f_2]}_{H^1} \leq C_{R, \alpha, 1} \norm{f_1 - f_2}_{H^1}.
        \end{gather*}
        \item For any $f_\ell \in H^2 (\Omega)$ ($\ell \in \{ 1, 2 \}$) such that $\norm{f_\ell}_{H^2} \leq R$ and $\abs{\mathcal{V} [f_\ell]} \geq \alpha$, there holds for all $1 \leq j \leq 4$
        \begin{equation*}
            \norm{V_j [f_1] - V_j [f_2]}_{H^2} \leq C_{R, \alpha, 2} \norm{f_1 - f_2}_{H^2}.
        \end{equation*}
    \end{itemize}
\end{lemma}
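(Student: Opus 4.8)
The plan is to treat the four terms separately, exploiting that each is assembled from three building blocks: the linear operator $\mathcal{V}$, which Corollary \ref{cor:xV_sobolev} shows to be bounded on $H^1$ and $H^2$; multiplication by fixed smooth coefficients (the bracket $\frac{\chi'}{\mathfrak{X}} - 1 - x\frac{\mathfrak{X}'}{\mathfrak{X}}$ in $V_1$ and $\log\mathfrak{X}$ in $V_4$), which by Lemma \ref{lem:prop_mcalX} are $\mathcal{C}^\infty$ on $\mathbb{R}$ and hence bounded with bounded derivatives on the compact interval $[-a,a]$; and the nonlinear factors $\log\abs{\mathcal{V}[f]}^2$ and $1/\abs{\mathcal{V}[f]}^2$, to which Lemmas \ref{lem:special_Sobolev} and \ref{lem:diff_special_Sobolev} apply with $m = \alpha$. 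The two hypotheses combine precisely here: $\abs{\mathcal{V}[f_\ell]} \geq \alpha$ supplies the uniform lower bound $m$, while $\norm{\mathcal{V}[f_\ell]}_{H^1} \leq C_{\Omega,a} R$ (from Corollary \ref{cor:xV_sobolev}) supplies the needed upper bound.

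First I would dispatch $V_1$ and $V_4$, which are \emph{linear} in $f$. Writing $V_1[f] = 2\lambda\, g\, \mathcal{V}[f]$ with $g$ the fixed smooth coefficient, the $H^k$-bound follows by combining the product estimate of Lemma \ref{lem:prod_Sobolev} (multiplication by a fixed $\mathcal{C}^k$ function) with $\norm{\mathcal{V}[f]}_{H^k} \leq C\norm{f}_{H^k}$, and both Lipschitz estimates are immediate from linearity since $V_1[f_1] - V_1[f_2] = 2\lambda\, g\, \mathcal{V}[f_1 - f_2]$. The term $V_4[f] = -2\lambda f \log\mathfrak{X}$ is handled identically, with $\log\mathfrak{X}$ as the fixed multiplier.

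The heart of the proof is the two nonlinear terms. For $V_2[f] = \lambda f \log\abs{\mathcal{V}[f]}^2$ I would use the asymmetric product estimates of Lemma \ref{lem:prod_Sobolev} to let the single top-order derivative fall on the explicit factor $f$: in $H^2$ this reads $\norm{f g}_{H^2} \leq C(\norm{f}_{H^2}\norm{g}_{L^\infty} + \norm{f}_{H^1}\norm{\partial_x g}_{H^1})$ with $g = \log\abs{\mathcal{V}[f]}^2$. Here $\norm{g}_{L^\infty} \lesssim \norm{g}_{H^1}$ is bounded using only $\norm{\mathcal{V}[f]}_{H^1} \leq C_{\Omega,a}R$, while $\norm{\partial_x g}_{H^1} \leq C_{R,\alpha}\norm{\mathcal{V}[f]}_{H^2} \leq C_{R,\alpha}\norm{f}_{H^2}$ by the last estimate of Lemma \ref{lem:special_Sobolev}; as this quantity is multiplied by $\norm{f}_{H^1} \leq R$, the resulting constant depends only on $R$ and $\alpha$, matching the first bullet (where $\norm{f_\ell}_{H^2}$ need not be controlled). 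For $V_3$ it is convenient to first record the algebraic simplification
\begin{equation*}
    V_3[f] = \lambda f + \lambda\, \overline{f}\, \frac{\mathcal{V}[f]^2}{\abs{\mathcal{V}[f]}^2},
\end{equation*}
obtained by expanding $\Re(f\overline{\mathcal{V}[f]}) = \tfrac{1}{2}(f\overline{\mathcal{V}[f]} + \overline{f}\,\mathcal{V}[f])$, which reduces the analysis to the trivial term $\lambda f$ and a product of $\overline{f}$ with $\mathcal{V}[f]^2/\abs{\mathcal{V}[f]}^2$; the latter is controlled by the product estimate together with the estimates on $1/\abs{\mathcal{V}[f]}^2$ in Lemma \ref{lem:special_Sobolev}. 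The difference estimate for $V_2$ then follows almost directly from Lemma \ref{lem:computations_Sobolev} applied with $w_2 = \mathcal{V}[f_1]$, $w_4 = \mathcal{V}[f_2]$, upon noting $\norm{\mathcal{V}[f_1] - \mathcal{V}[f_2]}_{H^k} = \norm{\mathcal{V}[f_1 - f_2]}_{H^k} \leq C_{\Omega,a}\norm{f_1 - f_2}_{H^k}$.

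I expect the main obstacle to be bookkeeping rather than any single sharp inequality: one must organize the difference estimates so that every constant depends only on $R$ and $\alpha$. This is most delicate for $V_3$, where two $f$-dependent factors are multiplied, and the standard telescoping $a_1 b_1 - a_2 b_2 = (a_1 - a_2)b_1 + a_2(b_1 - b_2)$ must pair the boundedness of one factor (from the first part of the lemma) with the Lipschitz estimate of the other (from Lemmas \ref{lem:diff_special_Sobolev} and \ref{lem:computations_Sobolev}). The split into two bullets is exactly what makes this succeed: the $H^1$ increment needs only $\norm{f_\ell}_{H^1} \leq R$, whereas the $H^2$ increment requires the stronger hypothesis $\norm{f_\ell}_{H^2} \leq R$ so that $\norm{\mathcal{V}[f_\ell]}_{H^2}$ is uniformly bounded and the $k=2$ case of Lemma \ref{lem:diff_special_Sobolev} can be invoked with a constant depending only on $R$ and $\alpha$.
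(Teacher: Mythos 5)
Your proposal is correct and uses essentially the same machinery as the paper: the boundedness of $\mathcal{V}$ from Corollary \ref{cor:xV_sobolev}, the product estimates of Lemma \ref{lem:prod_Sobolev}, and the composition estimates of Lemmas \ref{lem:special_Sobolev}, \ref{lem:diff_special_Sobolev} and \ref{lem:computations_Sobolev} with $m=\alpha$, together with the key bookkeeping point that in the first bullet only $\norm{f_\ell}_{H^1}\leq R$ may enter the constants while $\norm{f_\ell}_{H^2}$ appears linearly. The only divergence is your treatment of $V_3$: the paper estimates the quotient $\mathcal{V}[f]\,\Re\bigl(f\,\overline{\mathcal{V}[f]}\bigr)/\abs{\mathcal{V}[f]}^2$ directly via Lemma \ref{lem:computations_Sobolev}, whereas you first use the (correct) identity $V_3[f]=\lambda f+\lambda\overline{f}\,\mathcal{V}[f]^2/\abs{\mathcal{V}[f]}^2$, which reduces the hardest term to a product with a unimodular factor and is a valid, slightly cleaner alternative.
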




\begin{proof}
    All these estimates can be achieved thanks to Lemma \ref{lem:computations_Sobolev}, Lemma \ref{lem:V_sobolev} and Corollary \ref{cor:xV_sobolev}.
    For clarity and brevity, we will only estimate $V_3 [f]$ (which is the hardest case).
    From the assumption $\norm{f}_{H^1} \leq R$ and from Lemma \ref{lem:computations_Sobolev}, Lemma \ref{lem:V_sobolev} and Corollary \ref{cor:xV_sobolev}, we can find $R_0 > 0$ (depending only on $R$) such that
    \begin{equation*}
        \norm{\mathcal{V} [f]}_{H^1} + \norm{\mathcal{V} [f]}_{L^\infty} \leq R_0
    \end{equation*}
    and such that, by using using Lemma \ref{lem:computations_Sobolev},
    \begin{gather*}
        \norm{\mathcal{V} [f] \Re \Bigl( f \, \overline{\mathcal{V} [f]} \Bigr)}_{H^1} \leq R_0, \\
        \norm{\mathcal{V} [f] \Re \Bigl( f \, \overline{\mathcal{V} [f]} \Bigr)}_{H^2} \leq R_0 \norm{f}_{H^2}.
    \end{gather*}
    Therefore, we can estimate as follows, by using the fact that $\abs{\mathcal{V} [f]} \geq \alpha$:
    \begin{align*}
        \norm{V_3 [f]}_{H^1} &= 2 \abs{\lambda} \norm{\frac{\mathcal{V} [f] \Re \Bigl( f \, \overline{\mathcal{V} [f]} \Bigr)}{\abs{\mathcal{V} [f]}^2}}_{H^1} \\
            &\leq 2 \abs{\lambda} \frac{\norm{\mathcal{V} [f] \Re \Bigl( f \, \overline{\mathcal{V} [f]} \Bigr)}_{L^2}}{\alpha^2} + 2 \abs{\lambda} \frac{C_1}{\alpha^3} \norm{\mathcal{V} [f] \Re \Bigl( f \, \overline{\mathcal{V} [f]} \Bigr)}_{H^1} \norm{\mathcal{V} [f]}_{H^1} \\
            &\leq 2 \abs{\lambda} \frac{R_0}{\alpha^2} + 2 \abs{\lambda} \frac{C_1}{\alpha^3} R_0^2,
    \end{align*}
    and also
    \begin{align*}
        \norm{\partial_{xx} V_3 [f]}_{L^2} &= 2 \abs{\lambda} \norm{\partial_{xx} \biggl( \frac{\mathcal{V} [f] \Re \Bigl( f \, \overline{\mathcal{V} [f]} \Bigr)}{\abs{\mathcal{V} [f]}^2} \biggr)}_{L^2} \\
            &\leq
            \begin{multlined}[t]
                2 \abs{\lambda} \frac{C_2}{\alpha^4} \norm{\mathcal{V} [f]}_{H^1} \Bigl( \norm{\mathcal{V} [f] \Re \Bigl( f \, \overline{\mathcal{V} [f]} \Bigr)}_{H^1} \norm{\mathcal{V} [f]}_{H^2} \\ + \norm{\mathcal{V} [f] \Re \Bigl( f \, \overline{\mathcal{V} [f]} \Bigr)}_{H^2} \norm{\mathcal{V} [f]}_{H^1} \Bigr)
            \end{multlined}
             \\
            &\leq 2 \abs{\lambda} \frac{C_2}{\alpha^4} R_0 \left( R_0 \left(C_{\Omega, a} + \frac{5}{2} \right) \norm{f}_{H^2} + R_0^2 \norm{f}_{H^2} \right)
            \\
            &\leq 2 \abs{\lambda} \frac{C_2}{\alpha^4} R_0^2 \left( \left(C_{\Omega, a} + \frac{5}{2} \right) + R_0 \right) \norm{f}_{H^2}
    \end{align*}
    which leads to the conclusion.
\end{proof}

\begin{corollary} \label{cor:est_Schrogroup_Vj}
    Let $R, \alpha > 0$. There exists $K_{R, \alpha, 1}, K_{R, \alpha, 2} > 0$ such that the following holds.
    \begin{itemize}
        \item For $k \in \{ 1, 2\}$ and for any $f_\ell \in H^k (\Omega)$ ($\ell \in \{ 1, 2 \}$) such that $\norm{f_\ell}_{H^1} \leq R$ and $\abs{\mathcal{V} [f_j]} \geq \alpha$, there holds for all $1 \leq j \leq 5$ and all $\tau \in \mathbb{R}$
        \begin{gather*}
            \norm{e^{i \tau A} V_j [f_\ell]}_{H^k} \leq K_{R, \alpha, 1} \norm{f_\ell}_{H^k}, \\
            \norm{e^{i \tau A} (V_j [f_1] - V_j [f_2])}_{H^1} \leq K_{R, \alpha, 1} \norm{f_1 - f_2}_{H^1}.
        \end{gather*}
        \item For any $f_\ell \in H^2 (\Omega)$ ($\ell \in \{ 1, 2 \}$) such that $\norm{f_\ell}_{H^2} \leq R$ and $\abs{\mathcal{V} [f_j]} \geq \alpha$, there holds for all $1 \leq j \leq 5$ and all $\tau \in \mathbb{R}$
        \begin{equation*}
            \norm{e^{i \tau A} (V_j [f_1] - V_j [f_2])}_{H^2} \leq K_{R, \alpha, 2} \norm{f_1 - f_2}_{H^2}.
        \end{equation*}
    \end{itemize}
\end{corollary}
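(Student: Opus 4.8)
The plan is to read this corollary off directly from the two preceding results: the Sobolev bounds on the nonlinear maps $V_j$ from Lemma \ref{lem:est_Vj_Sobolev}, and the boundedness of the Schrödinger group on $H^1$ and $H^2$ from Proposition \ref{prop:evol_group_schrod_A}. A first remark is that, although Proposition \ref{prop:evol_group_schrod_A} is stated for $e^{-i\tau A}$, the estimates there hold for every $\tau \in \mathbb{R}$, hence equally for $e^{i\tau A}$ after the substitution $\tau \mapsto -\tau$; I would use them freely in this form, which is exactly why the resulting constants are independent of $\tau$.

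For the $H^1$ estimates I would set $g = V_j[f_\ell]$ (resp. $g = V_j[f_1] - V_j[f_2]$) and apply the contraction bound $\norm{e^{i\tau A} g}_{H^1} \leq \frac{\sqrt{C_1}}{\sqrt{c_1}} \norm{g}_{H^1}$ furnished by Proposition \ref{prop:evol_group_schrod_A}. Lemma \ref{lem:est_Vj_Sobolev} then controls $\norm{g}_{H^1}$ by $C_{R,\alpha,1} \norm{f_\ell}_{H^1}$ (resp. by $C_{R,\alpha,1} \norm{f_1 - f_2}_{H^1}$), so that taking $K_{R,\alpha,1} \geq \frac{\sqrt{C_1}}{\sqrt{c_1}} C_{R,\alpha,1}$ yields both $H^1$ inequalities for each $j$.

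The only point requiring a little care is the passage to $H^2$, since the $H^2$ bound for the group in Proposition \ref{prop:evol_group_schrod_A} is not a pure contraction but picks up a lower-order $H^1$ contribution, namely $\norm{e^{i\tau A} g}_{H^2} \leq C_2 \norm{g}_{H^2} + (1 + c_2)\frac{\sqrt{C_1}}{\sqrt{c_1}} \norm{g}_{H^1}$. Applying this with $g = V_j[f_\ell]$ and using both $\norm{V_j[f_\ell]}_{H^2} \leq C_{R,\alpha,1} \norm{f_\ell}_{H^2}$ and $\norm{V_j[f_\ell]}_{H^1} \leq C_{R,\alpha,1} \norm{f_\ell}_{H^1} \leq C_{R,\alpha,1} \norm{f_\ell}_{H^2}$ from Lemma \ref{lem:est_Vj_Sobolev}, the lower-order term is absorbed into the $H^2$ norm and one gets $\norm{e^{i\tau A} V_j[f_\ell]}_{H^2} \leq \bigl(C_2 + (1+c_2)\tfrac{\sqrt{C_1}}{\sqrt{c_1}}\bigr) C_{R,\alpha,1} \norm{f_\ell}_{H^2}$; enlarging $K_{R,\alpha,1}$ if necessary covers the $k=2$ case of the first inequality. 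For the $H^2$-difference estimate I would proceed identically with $g = V_j[f_1] - V_j[f_2]$, invoking the $H^2$-difference bound $\norm{g}_{H^2} \leq C_{R,\alpha,2} \norm{f_1 - f_2}_{H^2}$ together with $\norm{g}_{H^1} \leq C_{R,\alpha,1} \norm{f_1 - f_2}_{H^1} \leq C_{R,\alpha,1} \norm{f_1 - f_2}_{H^2}$, and then set $K_{R,\alpha,2} = C_2 C_{R,\alpha,2} + (1+c_2)\frac{\sqrt{C_1}}{\sqrt{c_1}} C_{R,\alpha,1}$.

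I do not anticipate any genuine obstacle: the statement is a mechanical consequence of the two cited results, and the content is essentially bookkeeping of constants. The single subtlety worth flagging is that the group is $H^2$-bounded only up to a lower-order $H^1$ term, so one must carry the $H^1$ estimates of Lemma \ref{lem:est_Vj_Sobolev} along even when proving the $H^2$ bounds; since these $H^1$ quantities are dominated by the corresponding $H^2$ ones, they are harmless. Crucially, every constant produced is uniform in $\tau$, which is precisely what the forthcoming fixed-point argument will require.
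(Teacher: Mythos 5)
Your proposal is correct and follows exactly the paper's own (one-line) proof, which consists of applying Proposition \ref{prop:evol_group_schrod_A} and Lemma \ref{lem:est_Vj_Sobolev} consecutively; you merely spell out the bookkeeping of constants that the paper leaves implicit. The one subtlety you flag — that the $H^2$ bound for the group carries a lower-order $H^1$ term which must be absorbed using $\norm{\cdot}_{H^1}\leq\norm{\cdot}_{H^2}$ — is handled correctly and is indeed the only point of the argument that is not purely mechanical.
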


\begin{proof}
    This result is achieved by applying consecutively Proposition \ref{prop:evol_group_schrod_A} and Lemma \ref{lem:est_Vj_Sobolev}.
\end{proof}

Moreover, we show that, if $f \in H^1_0 (\Omega)$, then so do the $V_j [f]$.

\begin{lemma} \label{lem:Vj_H10}
    Let $f \in H^1_0 (\Omega)$ such that $\abs{\mathcal{V} [f]} \geq \alpha$ for some $\alpha > 0$. Then, for all $1 \leq j \leq 4$, we have $V_j [f] \in H^1_0 (\Omega)$.
\end{lemma}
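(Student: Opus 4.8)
The plan is to reduce everything to a boundary check. Since Lemma \ref{lem:est_Vj_Sobolev} (applied with $k = 1$ and $R = \norm{f}_{H^1}$, which is legitimate because $f \in H^1_0 (\Omega) \subset H^1 (\Omega)$ and $\abs{\mathcal{V}[f]} \geq \alpha$ by assumption) already guarantees $V_j [f] \in H^1 (\Omega)$ for every $j$, I can invoke the continuous embedding $H^1 (\Omega) \hookrightarrow \mathcal{C} ([-a, a])$ together with the characterization $H^1_0 (\Omega) = \{ g \in H^1 (\Omega) \mid g(-a) = g(a) = 0 \}$ recalled in the Notations. Thus it suffices to prove that each $V_j [f]$ vanishes at the two endpoints $x = \pm a$.

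First I would dispose of $V_2$, $V_3$ and $V_4$, each of which carries $f$ as an explicit factor: $V_2 [f] = \lambda f \log \abs{\mathcal{V}[f]}^2$, $V_4 [f] = - 2 \lambda f \log \mathfrak{X}$, and $V_3 [f] = 2 \lambda \mathcal{V}[f] \, \abs{\mathcal{V}[f]}^{-2} \, \Re (f \, \overline{\mathcal{V}[f]})$. In each case the remaining factors are bounded and continuous near $x = \pm a$: indeed $\mathcal{V}[f]$ is continuous and bounded by Corollary \ref{cor:xV_sobolev} and satisfies $\alpha \leq \abs{\mathcal{V}[f]} \leq \norm{\mathcal{V}[f]}_{L^\infty}$, so both $\log \abs{\mathcal{V}[f]}^2$ and $\abs{\mathcal{V}[f]}^{-2}$ are bounded, while $\log \mathfrak{X}$ is continuous and bounded on $[-a, a]$ since $\mathfrak{X} \geq C_\chi > 0$ by Lemma \ref{lem:prop_mcalX}. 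As $f(\pm a) = 0$ because $f \in H^1_0 (\Omega)$, each of these three products vanishes at $x = \pm a$, hence belongs to $H^1_0 (\Omega)$.

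The only term requiring a genuine argument is $V_1 [f] = 2 \lambda \mathcal{V}[f] \bigl( \frac{\chi'}{\mathfrak{X}} - 1 - x \frac{\mathfrak{X}'}{\mathfrak{X}} \bigr)$, because here $\mathcal{V}[f]$ is bounded below by $\alpha$ and therefore does \emph{not} vanish at the boundary. Instead I would show that the scalar coefficient $g(x) \coloneqq \frac{\chi'(x)}{\mathfrak{X}(x)} - 1 - x \frac{\mathfrak{X}'(x)}{\mathfrak{X}(x)}$ itself vanishes at $x = \pm a$. This is exactly where the standing assumption $a \geq 2$ enters: for $\abs{x} \geq 2$ the cutoff $\chi$ is constant (equal to $\pm 1$), so $\chi'(\pm a) = 0$ and the first term drops out, while the computation underlying Lemma \ref{lem:prop_mcalX} gives $x \frac{\mathfrak{X}'}{\mathfrak{X}} = -1$ at $x = \pm a$. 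Hence $g(\pm a) = 0 - 1 - (-1) = 0$, so $V_1 [f] (\pm a) = 0$. (In fact, the identity $\frac{\mathfrak{X}'}{\mathfrak{X}} = \frac{\chi'}{\chi} - \frac{1}{x}$ coming from $\mathfrak{X} = \chi / x$ shows $g \equiv 0$ on all of $\Omega$, but vanishing at the endpoints is all that is required.)

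The main obstacle is precisely $V_1$: unlike the other three terms it is not manifestly proportional to $f$, so its boundary behavior cannot be read off from $f(\pm a) = 0$ and must instead be extracted from the fine structure of $\chi$ and $\mathfrak{X}$ near $\pm a$. That structure is available only because the hypothesis $a \geq 2$ places the endpoints in the flat region of the cutoff, which is what makes $\chi'(\pm a) = 0$ and the relation of Lemma \ref{lem:prop_mcalX} applicable there.
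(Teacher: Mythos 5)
Your proof is correct and follows essentially the same route as the paper: reduce the claim to the vanishing of each $V_j[f]$ at $x=\pm a$ via the embedding $H^1(\Omega)\hookrightarrow \mathcal{C}([-a,a])$, dispose of $V_2,V_3,V_4$ through the explicit factor $f$ multiplied by bounded coefficients, and handle $V_1$ through the structure of $\chi$ and $\mathfrak{X}$ at the endpoints, where the standing assumption $a\geq 2$ gives $\chi'(\pm a)=0$ and $x\mathfrak{X}'/\mathfrak{X}=-1$. The paper phrases the $V_1$ step slightly more globally (both factors of the scalar coefficient vanish outside $[-2,2]$, so $V_1[f]\equiv 0$ there), which is consistent with your parenthetical observation that the coefficient in fact vanishes identically.
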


\begin{proof}
    Since we already proved that all the $V_j$ are in $H^1 (\Omega) \hookrightarrow C^0 (\overline{\Omega})$ (in dimension $1$), we only have to check that all these terms vanish at the boundaries, that is, at $y = a$ and $y = -a$. Since $\abs{\mathcal{V} [f]} \geq \alpha > 0$ and $1 \geq \mathfrak{X} \geq C_\chi > 0$, the following estimates hold for all $y \in [-a, a]$:
    \begin{gather*}
        \abs{V_2 [f] (y)} \leq \abs{\lambda} \abs{f (y)} \Bigl( \abs{\log{\alpha^2}} + \abs{\log{\norm{f}_{L^\infty}^2}} \Bigr), \\
        \abs{V_3 [f] (y)} \leq 2 \abs{\lambda} \abs{f (y)}, \\
        \abs{V_4 [f] (y)} \leq 2 \abs{\lambda} \abs{\log{C_\chi}} \abs{f (y)}.
    \end{gather*}
    Since $\lim_{\pm a} f = 0$ by the fact that $f \in H^1_0 (\Omega)$, the conclusion follows for these cases. As for $V_1 [f]$, the fact that both $\chi'$ and $1 - x \frac{\mathfrak{X}'}{\mathfrak{X}}$ vanish outside~$[-2, 2]$ leads to $V_1 [f] \equiv 0$ outside~$[-2, 2]$, and in particular on the boundary.
\end{proof}

\subsection{Duhamel formulation}

We now state the rigorous Duhamel's formulation for equation~\eqref{eq:v}:

\begin{lemma} \label{lem:link_duhamel}
    Let $\varphi \in \mathcal{D} \cap H^2 (\Omega)$ such that $\partial_x \varphi \in H^1_0(\Omega)$ and denote $u \in \mathcal{C}(\R, H^2 (\Omega))$ the unique solution to \eqref{logNLS} with initial data $u(0)=\varphi$ and Neumann boundary conditions. Let~$v = \partial_x u$ and let $\alpha, T > 0$ be defined by Lemma \ref{lemma_minoration_dx_u} such that $\abs{\mathcal{V} [v (t)]} \geq \alpha$ for all $t \in (-T, T)$. Then there holds for all $t \in (-T,T)$
    \begin{equation} \label{eq:duhamel_dxu}
        v (t) = e^{- i t A} \partial_x \varphi -i \int_0^t e^{- i (t-s) A} \sum_{j=1}^4 V_j [v(s)] \dd s.
    \end{equation}
\end{lemma}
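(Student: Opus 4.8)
The plan is to obtain \eqref{eq:duhamel_dxu} from equation \eqref{eq:v} by a variation-of-constants (Duhamel) argument for the self-adjoint generator $A$. The one genuinely delicate point is that the argument must be carried out at the level of $H^1$ and its dual: the sought regularity $v(t) \in D(A)$ (equivalently $u(t) \in H^3$) is exactly what is \emph{not} yet available, so the naive strong computation $\frac{d}{dt} e^{itA} v(t) = -i e^{itA} \sum_j V_j[v]$ cannot be justified directly.

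First I would fix the functional framework. By Lemma \ref{lem:Cauchy} we have $u \in \mathcal{C}(\R; W_2(\Omega)) = \mathcal{C}(\R; D_{\Neu}(\Delta))$, so the Neumann condition gives $v(t) = \partial_x u(t) \in H^1_0(\Omega)$ for every $t$, and $v \in \mathcal{C}((-T,T); H^1_0(\Omega))$. Moreover $u \in W^{1,\infty}_{\loc}(\R; L^2)$ yields $\partial_t u \in L^\infty_{\loc}(\R; L^2)$, hence $\partial_t v = \partial_x \partial_t u \in L^\infty_{\loc}((-T,T); H^{-1}(\Omega))$. Differentiating \eqref{logNLS} in $x$ and inserting the identities \eqref{eq:relation_u_V}, \eqref{eq:rel_mcalVf_f} and Lemma \ref{lemma_integral_u}, that is, redoing the manipulations that produced \eqref{eq:v}, one checks that
\begin{equation*}
    i \partial_t v = A v + \sum_{j=1}^4 V_j[v] \qquad \text{in } H^{-1}(\Omega),\ \text{for a.e. } t \in (-T,T),
\end{equation*}
where $A v = -\Delta v + \lambda \log|\chi|^2\, v \in H^{-1}$, and where, thanks to $\abs{\mathcal{V}[v(t)]} \geq \alpha$, each $V_j[v(t)] \in H^1_0(\Omega) \subset L^2$ by Lemma \ref{lem:Vj_H10}, depending continuously on $t$ in $H^1$ by Lemma \ref{lem:est_Vj_Sobolev}.

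The heart of the proof is a tested variation-of-constants identity. Fix $\psi \in D(A)$ and set $\phi(t) = e^{-itA}\psi$, which by Lemma \ref{lem:self_adjointness} lies in $\mathcal{C}^1(\R; L^2) \cap \mathcal{C}(\R; D(A))$ with $\partial_t \phi = -i A \phi$. Since $v \in W^{1,\infty}(H^{-1})$ and $\phi \in \mathcal{C}(H^1_0) \cap \mathcal{C}^1(L^2)$, the scalar map $t \mapsto \langle v(t), \phi(t) \rangle$ (the sesquilinear $L^2$ pairing, read as the $H^{-1}$--$H^1_0$ duality where needed) is absolutely continuous, and for a.e.\ $t$ the product rule together with the equation for $\partial_t v$ gives two terms carrying the unbounded operator $A$. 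The symmetry of $A$ (namely $\langle A v, \phi \rangle = \langle v, A\phi \rangle$ for $v$ in the form domain $H^1_0$ and $\phi \in D(A)$) makes these two contributions cancel, leaving
\begin{equation*}
    \frac{d}{dt} \langle v(t), \phi(t) \rangle = -i \Bigl\langle \sum_{j=1}^4 V_j[v(t)],\, e^{-itA}\psi \Bigr\rangle = -i \Bigl\langle e^{itA} \sum_{j=1}^4 V_j[v(t)],\, \psi \Bigr\rangle .
\end{equation*}

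Finally I would integrate this over $[0,t]$, using that $s \mapsto e^{isA} \sum_j V_j[v(s)]$ is continuous in $L^2$ (by strong continuity of $e^{isA}$ and continuity of $s \mapsto V_j[v(s)]$ in $H^1$), so the Bochner integral is well defined. With $v(0) = \partial_x \varphi$ this yields
\begin{equation*}
    \Bigl\langle e^{itA} v(t) - \partial_x \varphi + i \int_0^t e^{isA} \sum_{j=1}^4 V_j[v(s)] \dd s,\ \psi \Bigr\rangle = 0
\end{equation*}
for every $\psi$ in the dense subspace $D(A)$, whence the bracketed vector vanishes in $L^2$. Applying the unitary $e^{-itA}$ and using $e^{-itA} e^{isA} = e^{-i(t-s)A}$ gives precisely \eqref{eq:duhamel_dxu}. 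The main obstacle is exactly the cancellation step: because $v(t)$ need not lie in $D(A)$, the unbounded contributions can only be eliminated in the weak tested formulation, and this relies crucially on the self-adjointness of $A$ from Lemma \ref{lem:self_adjointness}.
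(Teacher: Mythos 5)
Your proof is correct and follows essentially the same route as the paper: conjugate $v$ by the unitary group $e^{itA}$, use equation \eqref{eq:v} to cancel the $Av$ term, and integrate in time before applying $e^{-itA}$. The only difference is that you perform the cancellation in a weak formulation tested against $\psi \in D(A)$, which rigorously justifies the time-differentiation of $t \mapsto e^{itA} v(t)$ that the paper carries out directly even though $v(t)$ is only known to lie in $H^1_0(\Omega)$; this is a more careful rendering of the same argument rather than a different proof.
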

\begin{proof}
    Let's first note that all the terms involved in the right hand side of equation \eqref{eq:duhamel_dxu} are well defined thanks to Proposition \ref{prop:evol_group_schrod_A} and Lemma \ref{lem:est_Vj_Sobolev}, since $v \in \mathcal{C} ((-T,T); H^1(\Omega))$.
    From the computations at the beginning of this section, we know that $v$ satisfies \eqref{eq:v}.
    Let $\tilde{v}=e^{itA}v \in \mathcal{C} ((-T,T); H^1(\Omega))$ in view of Proposition \ref{prop:evol_group_schrod_A}. Then 
    \[  i \partial_t \tilde{v}(t) =  e^{-itA}\sum_{j=1}^4 V_j [v(t)] \]
    for all time $t \in \left(-T,T \right)$, with the right-hand side being in $L^\infty ((-T, T); H^1_0 (\Omega))$. By integrating over time, one gets
    \[  \tilde{v}(t)=e^{ i t A}v (t) = e^{ i t A} \partial_x \varphi -i \int_0^t e^{is A} \sum_{j=1}^4 V_j [v(s)] \dd s,   \]
    which gives the result applying $e^{-itA}$.
\end{proof}

One can state an immediate corollary:
\begin{corollary} \label{cor:link_duhamel_dxu}
    Let $w = v - \partial_x \varphi$. Then there holds
    \begin{equation} \label{eq:duhamel_dxu_diff}
        w (t) = e^{- i t A} \partial_x \varphi - \partial_x \varphi -i \int_0^t e^{- i (t-s) A} \sum_{j=1}^4 V_j [w(s) + \partial_x \varphi] \dd s.
    \end{equation}
\end{corollary}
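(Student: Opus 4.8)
The plan is to obtain \eqref{eq:duhamel_dxu_diff} by a direct substitution into the Duhamel formula \eqref{eq:duhamel_dxu} furnished by Lemma \ref{lem:link_duhamel}, so that no new analytic ingredient is required. First I would record that, by the very definition $w = v - \partial_x \varphi$, one has $v(s) = w(s) + \partial_x \varphi$ for every $s \in (-T, T)$; consequently, for each index $1 \leq j \leq 4$, the nonlinear term transforms as $V_j [v(s)] = V_j [w(s) + \partial_x \varphi]$.

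Next I would subtract the time-independent function $\partial_x \varphi$ from both sides of the identity \eqref{eq:duhamel_dxu}. On the left this produces $v(t) - \partial_x \varphi = w(t)$, while on the right the free term becomes $e^{-itA} \partial_x \varphi - \partial_x \varphi$ and the Duhamel integral is left untouched apart from the substitution recorded above. This yields exactly
\begin{equation*}
    w (t) = e^{- i t A} \partial_x \varphi - \partial_x \varphi - i \int_0^t e^{- i (t-s) A} \sum_{j=1}^4 V_j [w(s) + \partial_x \varphi] \dd s,
\end{equation*}
which is precisely the claimed formula.

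The only point deserving a word of caution is the well-definedness of every term appearing in \eqref{eq:duhamel_dxu_diff}, but this is inherited directly from Lemma \ref{lem:link_duhamel}: since $v \in \mathcal{C} ((-T,T); H^1 (\Omega))$ and $\abs{\mathcal{V} [v (t)]} \geq \alpha$ on $(-T, T)$, the quantities $V_j [v(s)] = V_j [w(s) + \partial_x \varphi]$ and their images under the Schrödinger group $e^{- i (t-s) A}$ are controlled via Lemma \ref{lem:est_Vj_Sobolev} and Proposition \ref{prop:evol_group_schrod_A}. There is therefore no genuine obstacle here: the corollary is an immediate algebraic rewriting of the Duhamel formula and follows at once.
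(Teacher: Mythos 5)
Your proposal is correct and is exactly the argument the paper intends: the corollary is stated as an immediate consequence of Lemma \ref{lem:link_duhamel}, obtained by substituting $v(s) = w(s) + \partial_x \varphi$ into \eqref{eq:duhamel_dxu} and subtracting the constant-in-time function $\partial_x \varphi$ from both sides. Your additional remark on well-definedness being inherited from the lemma is accurate and requires nothing new.
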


\subsection{Properties of the nonlinear part}

We define
\begin{gather} \label{eq:def_N_mcalN}
    N [f] (\tau) \coloneqq e^{- i \tau A} \sum_{j=1}^4 V_j [f], \\
    \mathcal{N} [v] (t) \coloneqq \int_0^t e^{- i (t-s) A} \sum_{j=1}^4 V_j [v (s)] \dd s = \int_0^t N [v (s)] (t-s) \dd s = e^{- i t A} \int_0^t N [v (s)] (-s) \dd s.
\end{gather}

We then show some smallness and Lipschitz properties for $\mathcal{N}$:

\begin{lemma} \label{lem:est_mcalN}
    Let $R, \alpha > 0$. There exists $C_{R, \alpha} > 0$ such that the following holds:
    \begin{itemize}
        \item Let $v_\ell \in \mathcal{C} ((- T, T), H^1_0 (\Omega))$ ($\ell = 1, 2$) such that $\abs{\mathcal{V} [v_\ell (t)]} \geq \alpha$ and $\norm{v_\ell (t)}_{H^1} \leq R$ for all $t \in (- T, T)$.
    Then $(\tau, t) \mapsto N [v_\ell (t)] (\tau) \in L^\infty (\mathbb{R} \times (-T, T); H^1_0 (\Omega))$ and $\mathcal{N} [v_\ell] \in \mathcal{C} ((- T, T), H^1_0 (\Omega))$, and they satisfy
    \begin{gather}
        \norm{N [v_\ell (s)] (\tau)}_{H^1} \leq C_{R, \alpha}, \qquad \forall \tau \in \mathbb{R}, \forall s \in (-T, T), \label{eq:est_N1} \\
        \norm{\mathcal{N} [v_\ell] (t)}_{H^1} \leq C_{R, \alpha} T, \qquad \forall t \in (-T,T), \label{eq:est_mcalN_1} \\
        \norm{\mathcal{N} [v_1] (t) - \mathcal{N} [v_2] (t)}_{H^1} \leq C_{R, \alpha} T \norm{v_1 - v_2}_{L^\infty_T H^1}, \qquad \forall t \in (-T,T). \label{eq:est_mcalN_2}
    \end{gather}
    \item Furthermore, if $v_\ell \in \mathcal{C}((- T, T), H^1_0 (\Omega) \cap H^2 (\Omega))$, there also hold $(\tau, t) \mapsto N [v_\ell (t)] (\tau) \in L^\infty (\mathbb{R} \times (-T, T); H^1_0 (\Omega) \cap H^2 (\Omega))$ and $\mathcal{N} [v_\ell] \in \mathcal{C}((-T,T), H^1_0 (\Omega) \cap H^2 (\Omega))$, as well as
    \begin{gather}
        \norm{N [v_\ell (s)] (\tau)}_{H^2} \leq C_{R, \alpha} \norm{v_\ell (s)}_{H^2}, \qquad \forall \tau \in \mathbb{R}, \forall s \in (-T, T), \notag \\
        \norm{\mathcal{N} [v_\ell] (t)}_{H^2} \leq C_{R, \alpha} \int_0^t \norm{v_\ell (s)}_{H^2} \dd s \leq C_{R, \alpha} T \norm{v_\ell}_{L^\infty_T H^2}, \qquad \forall t \in (-T,T). \label{eq:est_mcalN_3}
    \end{gather}
    \item Last, if we also have $\norm{v_\ell (t)}_{H^2} \leq R$ for all $t \in (- T, T)$, then there holds
    \begin{gather*}
        \norm{N [v_1 (s)] (\tau) - N [v_2 (s)] (\tau)}_{H^2} \leq C_{R, \alpha} \norm{v_1 (s) - v_2 (s)}_{H^2}, \qquad \forall \tau \in \mathbb{R}, \forall s \in (-T, T), \\
        \norm{\mathcal{N} [v_1] (t) - \mathcal{N} [v_2] (t)}_{H^2} \leq C_{R, \alpha} T \norm{v_1 - v_2}_{L^\infty_T H^2}, \qquad \forall t \in (-T,T).
    \end{gather*}
    \end{itemize}
\end{lemma}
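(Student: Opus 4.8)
The plan is to derive every assertion from the pointwise bounds on $N$ provided by Corollary \ref{cor:est_Schrogroup_Vj}, and then to integrate in time. Recall that $N[f](\tau) = e^{-i\tau A}\sum_{j=1}^4 V_j[f]$; since the unitary group $e^{-i\tau A}$ is parametrized by $\tau \in \R$, the estimates of Corollary \ref{cor:est_Schrogroup_Vj} apply verbatim to $N$ (replacing $\tau$ by $-\tau$).

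First I would treat the $H^1$ bounds. Under the first set of hypotheses, $f = v_\ell(s)$ satisfies $\norm{f}_{H^1} \leq R$ and $\abs{\mathcal{V}[f]} \geq \alpha$ for each $s$, so summing the four bounds $\norm{e^{-i\tau A} V_j[f]}_{H^1} \leq K_{R,\alpha,1}\norm{f}_{H^1}$ yields \eqref{eq:est_N1}. That $N[v_\ell(s)](\tau) \in H^1_0(\Omega)$ follows from Lemma \ref{lem:Vj_H10}, which places each $V_j[v_\ell(s)]$ in $H^1_0(\Omega)$, combined with the $H^1_0$-invariance of the group from Proposition \ref{prop:evol_group_schrod_A}. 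The bound \eqref{eq:est_mcalN_1} is then immediate from Minkowski's integral inequality applied to $\mathcal{N}[v_\ell](t) = \int_0^t N[v_\ell(s)](t-s)\dd s$, giving $\norm{\mathcal{N}[v_\ell](t)}_{H^1} \leq C_{R,\alpha}\abs{t} \leq C_{R,\alpha} T$, and the Lipschitz estimate \eqref{eq:est_mcalN_2} is obtained identically from the difference bound of Corollary \ref{cor:est_Schrogroup_Vj}.

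The continuity statement $\mathcal{N}[v_\ell] \in \mathcal{C}((-T,T), H^1_0(\Omega))$ is the point requiring the most care. Writing $\mathcal{N}[v_\ell](t) = e^{-itA} w(t)$ with $w(t) = \int_0^t e^{isA}\sum_j V_j[v_\ell(s)]\dd s$, I would first show that the integrand $s \mapsto e^{isA}\sum_j V_j[v_\ell(s)]$ is continuous into $H^1_0(\Omega)$: the map $s \mapsto V_j[v_\ell(s)]$ is continuous by the Lipschitz estimate of Lemma \ref{lem:est_Vj_Sobolev} and the continuity of $v_\ell$, while $s \mapsto e^{isA}g$ is continuous for fixed $g$ by strong continuity of the group, and the splitting
\begin{equation*}
    \norm{e^{is_1 A} V_j[v_\ell(s_1)] - e^{is_2 A} V_j[v_\ell(s_2)]}_{H^1} \leq \norm{V_j[v_\ell(s_1)] - V_j[v_\ell(s_2)]}_{H^1} + \norm{(e^{is_1 A} - e^{is_2 A}) V_j[v_\ell(s_2)]}_{H^1}
\end{equation*}
combines these, the first term exploiting unitarity. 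Hence $w$ is continuous (indeed $\mathcal{C}^1$) from $(-T,T)$ into $H^1_0(\Omega)$, and the continuity-preservation part of Proposition \ref{prop:evol_group_schrod_A} applied to $e^{-itA}w(t)$ yields $\mathcal{N}[v_\ell] \in \mathcal{C}((-T,T), H^1_0(\Omega))$; the $L^\infty$-membership of $(\tau,t) \mapsto N[v_\ell(t)](\tau)$ is just \eqref{eq:est_N1}.

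Finally, the $H^2$ assertions of the second and third bullets follow along the same lines, now invoking the $H^2$-bounds of Corollary \ref{cor:est_Schrogroup_Vj} --- namely $\norm{e^{-i\tau A}V_j[f]}_{H^2} \leq K_{R,\alpha,1}\norm{f}_{H^2}$ (which requires only $\norm{f}_{H^1} \leq R$, explaining why \eqref{eq:est_mcalN_3} carries the factor $\norm{v_\ell(s)}_{H^2}$ inside the integral) and the corresponding $H^2$ difference bound (valid once $\norm{v_\ell}_{H^2} \leq R$) --- together with the $H^2$-invariance and continuity-preservation of the group from Proposition \ref{prop:evol_group_schrod_A}. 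The only genuine difficulty is thus the bookkeeping in the continuity argument; the norm estimates themselves reduce, via Minkowski's inequality, to the already-established pointwise bounds on $N$.
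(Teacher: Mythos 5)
Your proof is correct and follows essentially the same route as the paper: sum the bounds of Corollary~\ref{cor:est_Schrogroup_Vj} to get \eqref{eq:est_N1}, integrate in time for \eqref{eq:est_mcalN_1}--\eqref{eq:est_mcalN_2}, and obtain continuity via the factorization $\mathcal{N}[v_\ell](t) = e^{-itA}\int_0^t N[v_\ell(s)](-s)\,\mathrm{d}s$ together with Proposition~\ref{prop:evol_group_schrod_A}. The only (harmless) difference is that you prove continuity of the integrand $s \mapsto e^{isA}\sum_j V_j[v_\ell(s)]$, whereas the paper simply notes that the uniform $H^1$ bound on $N[v_\ell(s)](-s)$ already makes the time integral Lipschitz continuous.
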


\begin{proof}
    For the $H^1_0$ case, with the first assumptions, we can apply Corollary \ref{cor:est_Schrogroup_Vj} with general $\tau \in \mathbb{R}$, so that the estimate \eqref{eq:est_N1} can be found after summation. Then, we can apply the latter estimate twice:
    \begin{itemize}
        \item first with $\tau = t-s$, so that \eqref{eq:est_mcalN_1} and \eqref{eq:est_mcalN_2} are found after integration in time,
        \item secondly, with $\tau = - s$, leading to the continuity of $t \mapsto \int_0^t N [v_\ell (s)] (-s) \dd s$ in $(- T, T)$ with values in $H^1_0$, and finally the continuity of $\mathcal{N} [v_\ell]$ in $(- T, T)$ with values in $H^1_0$ thanks to Theorem \ref{prop:evol_group_schrod_A} and the last relation in \eqref{eq:def_N_mcalN}.
    \end{itemize}
    The other estimates of this result are obtained with similar arguments.
\end{proof}

\subsection{Cauchy theory}

We aim to establish a well-posed Cauchy theory for equation~\eqref{eq:duhamel_dxu} in both $H^1_0(\Omega)$ and $H^1_0(\Omega) \cap H^2(\Omega)$. However, we know that certain terms $V_j [v(s)]$ (notably $V_2 [v(s)]$ and $V_3 [v(s)]$) are only well-defined when $\abs{\mathcal{V} [v] (s)}$ does not vanish. As a result, it is not possible to develop a Cauchy theory on the entire space. Given that the initial data already satisfies the necessary assumptions and since we expect some continuity, a natural way to address this issue is to work within a (small) ball in the aforementioned spaces, centered on the initial data. Within this framework, we are restricted to a short time interval, which is not very surprising as we do not expect a global Cauchy theory with such techniques.\\

We define, for $T > 0$,
\begin{gather*}
    Y_{1, \varepsilon, T} \coloneqq \mathcal{C}( \left[-T,T\right], B_{H^1_0 (\Omega)} (0, \varepsilon)), \\
    Y_{2, \varepsilon, T} \coloneqq \mathcal{C}(\left[-T,T\right], B_{H^1_0 (\Omega) \cap H^2 (\Omega)} (0, \varepsilon)),
\end{gather*}
where
\begin{gather*}
    B_{H^1_0 (\Omega)} (0, \varepsilon) = \{ f \in H^1_0 (\Omega) \mid \norm{f}_{H^1} \leq \varepsilon \}, \\
    B_{H^1_0 (\Omega) \cap H^2 (\Omega)} (0, \varepsilon) = \{ f \in H^1_0 (\Omega) \cap H^2 (\Omega) \mid \norm{f}_{H^2} \leq \varepsilon \}.
\end{gather*}

Our goal is to establish a well-posed Cauchy theory in both $Y_{1, \varepsilon_1, T_1}$ and $Y_{2, \varepsilon_2, T_2}$, associated to some~$\varepsilon_{\ell}, T_{\ell} > 0$ for
\begin{equation} \label{eq:abstract_rel_for_Cauchy}
    w (t) = e^{- i t A} v_0 - v_0 + \mathcal{N} [v_0 + w] (t),
\end{equation}
with initial data $v_0$ (satisfying similar assumptions as $\partial_x \varphi$) at $t=0$, so that $v \coloneqq v_0 + w$ satisfies
\begin{equation} \label{eq:abstract_rel_for_Cauchy_2}
    v (t) = e^{- i t A} v_0 + \mathcal{N} [v] (t).
\end{equation}
\begin{remark} \label{rem:time_inv_abstract}
    We note that, since the equation is invariant under time translations, all forthcoming results remain valid for initial data prescribed at any time $t = t_0$ with the usual changes. This applies in particular to the Cauchy theory, which will be addressed in Proposition \ref{prop:Cauchy_abstract}.
\end{remark}

We first show that, as soon as $\varepsilon > 0$ is small enough, this equation is well defined.

\begin{lemma} \label{lem:est_Vcal_bfb}
    Let $v_0 \in H^1_0 (\Omega)$ satisfying $\abs{\mathcal{V} [v_0]} \geq \alpha$ for some $\alpha > 0$. Then there exists $\varepsilon_0 > 0$ such that, for any $T > 0$, for any $w \in Y_{k, \varepsilon_0, T}$ (with $k=1$ or $k=2$) and for all $t \in\left[-T,T\right]$,
    \begin{gather*}
        \norm{v_0 + w(t)}_{H^k} \leq R_{k, \varepsilon_0}, \\
        \norm{\mathcal{V} [v_0 + w (t)]}_{H^k} \leq C_{\Omega} R_{k, \varepsilon_0}, 
    \end{gather*}
    and
    \begin{equation} \label{eq:mathcal_V}
         \abs{\mathcal{V} [v_0 + w (t)]} \geq \frac{\alpha}{2},
    \end{equation}
    where $R_{k, \alpha, \varepsilon_0} \coloneqq \norm{v_0}_{H^k} + \varepsilon_0$.
\end{lemma}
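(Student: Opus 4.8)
The plan is to treat the three claims in increasing order of difficulty, the only genuine content lying in the lower bound \eqref{eq:mathcal_V}; the two norm estimates follow immediately from the triangle inequality and from Corollary \ref{cor:xV_sobolev}. First I would use that membership of $w$ in $Y_{k, \varepsilon_0, T}$ means exactly $\norm{w(t)}_{H^k} \leq \varepsilon_0$ for every $t \in [-T, T]$, so that the triangle inequality gives $\norm{v_0 + w(t)}_{H^k} \leq \norm{v_0}_{H^k} + \varepsilon_0 = R_{k, \varepsilon_0}$, which is the first estimate and holds for any $\varepsilon_0 > 0$ regardless of its size. For the second estimate I would apply Corollary \ref{cor:xV_sobolev} to $f = v_0 + w(t) \in H^k(\Omega)$, obtaining $\norm{\mathcal{V}[v_0 + w(t)]}_{H^k} \leq C_\Omega \norm{v_0 + w(t)}_{H^k}$ (with $C_\Omega$ standing for the relevant constant from Corollary \ref{cor:xV_sobolev}, namely $C_{\Omega, a}$ for $k = 1$ and $C_{\Omega, a} + \frac{1}{2}$ for $k = 2$), and then bounding the right-hand side by $C_\Omega R_{k, \varepsilon_0}$ using the first estimate.

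The heart of the lemma is the pointwise lower bound, and it is here that the smallness of $\varepsilon_0$ is used and its value fixed. Since $\mathcal{V}$ is linear we have $\mathcal{V}[v_0 + w(t)] = \mathcal{V}[v_0] + \mathcal{V}[w(t)]$, so by the reverse triangle inequality at each point $x \in [-a, a]$,
\[ \abs{\mathcal{V}[v_0 + w(t)](x)} \geq \abs{\mathcal{V}[v_0](x)} - \abs{\mathcal{V}[w(t)](x)} \geq \alpha - \norm{\mathcal{V}[w(t)]}_{L^\infty}. \]
By Lemma \ref{lem:mcalV_Linfty}, the operator $\mathcal{V}$ has continuity constant $1$ on $L^\infty(\Omega)$, hence $\norm{\mathcal{V}[w(t)]}_{L^\infty} \leq \norm{w(t)}_{L^\infty}$; and by the one-dimensional Sobolev embedding $H^1(\Omega) \hookrightarrow L^\infty(\Omega)$ there is a constant $C_S > 0$ depending only on $\Omega$ such that $\norm{w(t)}_{L^\infty} \leq C_S \norm{w(t)}_{H^1} \leq C_S \norm{w(t)}_{H^k} \leq C_S \varepsilon_0$. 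It therefore suffices to choose $\varepsilon_0 \coloneqq \frac{\alpha}{2 C_S}$ (or any smaller positive number), which forces $\alpha - C_S \varepsilon_0 \geq \frac{\alpha}{2}$ and hence \eqref{eq:mathcal_V}.

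The point I would emphasize is that this choice of $\varepsilon_0$ depends only on $v_0$ (through $\alpha$) and on the domain $\Omega$ (through the embedding constant $C_S$), but \emph{not} on $T$: this uniformity is exactly what is required so that a single $\varepsilon_0$ works for every $T > 0$, as stated. There is no substantial obstacle beyond bookkeeping, since all the analytic input is already packaged in Lemma \ref{lem:mcalV_Linfty}, Corollary \ref{cor:xV_sobolev} and the Sobolev embedding; the only mildly delicate aspect is to carry out every bound uniformly in $t \in [-T, T]$, which is automatic because each of the above estimates on $w(t)$ holds for every such $t$ by definition of $Y_{k, \varepsilon_0, T}$.
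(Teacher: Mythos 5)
Your proposal is correct and follows essentially the same route as the paper: the first two estimates via the triangle inequality and Corollary \ref{cor:xV_sobolev}, and the lower bound \eqref{eq:mathcal_V} via linearity of $\mathcal{V}$, the $L^\infty$ continuity from Lemma \ref{lem:mcalV_Linfty}, the Sobolev embedding $H^1(\Omega)\hookrightarrow L^\infty(\Omega)$, and the choice $C_S\varepsilon_0\leq\frac{\alpha}{2}$. Your write-up is simply a more detailed version of the paper's argument, including the correct observation that $\varepsilon_0$ is independent of $T$.
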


\begin{proof}
    The first and second estimates are direct as $w \in Y_{k,\eps_0,T}$ and from Corollary \ref{cor:xV_sobolev}. From Lemma \ref{lem:mcalV_Linfty}, we also get that
    \begin{equation*}
        \abs{\mathcal{V} [w (t)]} \leq \norm{w}_{L^\infty} \leq  C_{\Omega} \norm{w}_{H^1} \leq  C_{\Omega} \varepsilon,
    \end{equation*}
    from Sobolev embeddings. Assuming that $C_{\Omega} \varepsilon < \frac{\alpha}{2}$, we get the conclusion by assumptions and using triangular inequality.
\end{proof}

We now have all the material to prove the Cauchy theory of \eqref{eq:abstract_rel_for_Cauchy}.

\begin{proposition} \label{prop:Cauchy_abstract}
    Let $v_0 \in H^1_0 (\Omega)$ (resp. $v_0 \in H^1_0 (\Omega) \cap H^2 (\Omega)$) satisfying $\abs{\mathcal{V} [v_0]} \geq \alpha$ for some $\alpha > 0$. Then there exists $\varepsilon_0 > 0$ depending on $\alpha$ such that, for all $\varepsilon \in (0, \varepsilon_0)$, the following holds: there exists $T_0 > 0$ such that, for all $T \in (0, T_0)$, there exists a unique $w \in Y_{1, \varepsilon, T}$ (resp. $w \in Y_{2, \varepsilon, T}$) such that, for all $t \in \left[-T,T\right]$, equation \eqref{eq:abstract_rel_for_Cauchy} holds. Moreover, equation \eqref{eq:mathcal_V} holds for all $t \in \left[-T,T\right]$.
\end{proposition}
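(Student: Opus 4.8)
The plan is to set up a contraction-mapping (Banach fixed-point) argument for the map
\[
    \Phi[w](t) \coloneqq e^{-itA} v_0 - v_0 + \mathcal{N}[v_0 + w](t),
\]
on the complete metric space $Y_{k, \varepsilon, T}$ (with $k=1$ or $k=2$), equipped with the norm $\norm{\cdot}_{L^\infty_T H^k}$, where a fixed point is precisely a solution of \eqref{eq:abstract_rel_for_Cauchy}. First I would fix $\alpha > 0$ and invoke Lemma \ref{lem:est_Vcal_bfb} to obtain $\varepsilon_0 > 0$ such that, for any $w \in Y_{k, \varepsilon, T}$ with $\varepsilon \in (0, \varepsilon_0)$, the argument $v_0 + w(t)$ stays bounded in $H^k$ by $R_{k, \varepsilon_0}$ and satisfies $\abs{\mathcal{V}[v_0 + w(t)]} \geq \frac{\alpha}{2}$ for all $t \in [-T, T]$; this is exactly what is needed to apply all the estimates of Lemma \ref{lem:est_mcalN} with the parameters $R = R_{k, \varepsilon_0}$ and $\frac{\alpha}{2}$ in place of $\alpha$.

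Next I would verify the two properties of a contraction. For \emph{stability} (that $\Phi$ maps $Y_{k, \varepsilon, T}$ into itself), I write
\[
    \norm{\Phi[w](t)}_{H^k} \leq \norm{e^{-itA} v_0 - v_0}_{H^k} + \norm{\mathcal{N}[v_0 + w](t)}_{H^k}.
\]
The second term is controlled by $C_{R, \alpha} T \, (\norm{v_0}_{H^k} + \varepsilon)$ via \eqref{eq:est_mcalN_1} or \eqref{eq:est_mcalN_3}, so it can be made smaller than $\frac{\varepsilon}{2}$ by shrinking $T$. The first term is handled by strong continuity of the group $e^{-itA}$ on $H^k$ (Proposition \ref{prop:evol_group_schrod_A}): since $e^{-itA} v_0 \to v_0$ in $H^k$ as $t \to 0$, there is a $T_0$ so that $\norm{e^{-itA} v_0 - v_0}_{H^k} \leq \frac{\varepsilon}{2}$ for $\abs{t} \leq T_0$, giving $\norm{\Phi[w](t)}_{H^k} \leq \varepsilon$. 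Here I should note that Proposition \ref{prop:evol_group_schrod_A} gives only a \emph{bound} $\norm{e^{-itA}\varphi}_{H^k} \lesssim \norm{\varphi}_{H^k}$, not continuity in $H^2$; the cleanest way around this is to use that $e^{-itA}v_0 - v_0 = -i\int_0^t e^{-isA}(A_\kappa - \kappa) v_0 \, \dd s$ when $v_0 \in D(A) = H^2 \cap H^1_0$, so the $H^2$ difference is $O(t)$ and can be absorbed into the same shrinking-of-$T$ argument (this is also why the hypothesis $v_0 \in H^1_0 \cap H^2$ is imposed in the $k=2$ case). For \emph{contraction}, I estimate
\[
    \norm{\Phi[w_1](t) - \Phi[w_2](t)}_{H^k} = \norm{\mathcal{N}[v_0 + w_1](t) - \mathcal{N}[v_0 + w_2](t)}_{H^k} \leq C_{R, \alpha} T \norm{w_1 - w_2}_{L^\infty_T H^k}
\]
directly from \eqref{eq:est_mcalN_2} (for $k=1$) or the last estimate of Lemma \ref{lem:est_mcalN} (for $k=2$), which is a contraction once $C_{R, \alpha} T < \frac{1}{2}$. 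Note that the $H^2$ Lipschitz bound requires the uniform $H^2$-bound on both $v_0 + w_1$ and $v_0 + w_2$, which Lemma \ref{lem:est_Vcal_bfb} supplies. That the image lands in $H^1_0$ (not merely $H^1$) follows from Lemma \ref{lem:Vj_H10} together with the invariance of $H^1_0$ and of $H^1_0 \cap H^2$ under the group (Proposition \ref{prop:evol_group_schrod_A}), as already recorded in Lemma \ref{lem:est_mcalN}.

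Choosing $T_0$ so that both the stability radius condition and $C_{R, \alpha} T_0 < \frac{1}{2}$ hold, the Banach fixed-point theorem yields a unique $w \in Y_{k, \varepsilon, T}$ solving \eqref{eq:abstract_rel_for_Cauchy} for every $T \in (0, T_0)$, and \eqref{eq:mathcal_V} holds for this $w$ by Lemma \ref{lem:est_Vcal_bfb}. The main obstacle I anticipate is precisely the continuity-in-time at $t=0$ in the $H^2$ topology for the stability step: since the available group estimates are only uniform bounds, one must use the extra structure $v_0 \in D(A)$ and the Duhamel representation of $e^{-itA}v_0 - v_0$ to get the quantitative $O(t)$ smallness, rather than relying on a naive continuity statement. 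Everything else is a routine application of the Sobolev product and composition estimates already assembled in Lemmas \ref{lem:est_mcalN} and \ref{lem:est_Vcal_bfb}.
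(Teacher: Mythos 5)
Your proposal is correct and follows essentially the same route as the paper: a Picard contraction for $w \mapsto e^{-itA}v_0 - v_0 + \mathcal{N}[v_0+w]$ on $Y_{k,\varepsilon,T}$, with $\varepsilon_0$ supplied by Lemma \ref{lem:est_Vcal_bfb}, the nonlinear part controlled by Lemma \ref{lem:est_mcalN}, and the linear part made smaller than $\varepsilon/2$ by shrinking $T$.

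One caveat on your aside about $H^2$-continuity: it is both unnecessary and, as stated, not a valid substitute. Unnecessary, because the last bullet of Proposition \ref{prop:evol_group_schrod_A} (applied to the constant map $w(t) \equiv v_0$) does assert continuity of $t \mapsto e^{-itA}v_0$ with values in $H^1_0(\Omega) \cap H^2(\Omega)$, which is exactly what the paper invokes. Not valid as stated, because the identity $e^{-itA}v_0 - v_0 = -i\int_0^t e^{-isA}Av_0 \,\dd s$ yields $O(t)$ smallness only in $L^2$: one merely has $Av_0 \in L^2(\Omega)$, so upgrading this to $O(t)$ in $H^2$ would require $v_0 \in D(A^2)$, which is not assumed. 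If one wanted to justify the $H^2$-continuity from scratch, the correct route is to use the commutation $A_\kappa e^{-itA} = e^{-itA}A_\kappa$ together with the norm equivalences of Lemmas \ref{lemma_positivity_A} and \ref{lem:equiv_A_H2}, reducing the claim to strong $L^2$-continuity of the group applied to $A_\kappa v_0$ (and to $v_0$ for the lower-order part). With that repaired, the rest of your argument matches the paper's proof.
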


\begin{proof}
    We assume that $0 < \varepsilon < \varepsilon_0$ where $\varepsilon_0 > 0$ is given by Lemma \ref{lem:est_Vcal_bfb}. In particular, we can apply Lemma \ref{lem:est_mcalN}, so that there exists $K_{\varepsilon_0, \alpha} > 0$ such that, for $k \in \{ 1, 2 \}$, for $w_\ell \in Y_{k, \varepsilon, T}$ ($\ell \in \{ 1, 2 \}$), we have
    \begin{gather*}
        \norm{\mathcal{N} [v_0 + w_\ell] (t)}_{H^1} \leq K_{\varepsilon_0, \alpha} T, \\
        \norm{\mathcal{N} [v_0 + w_1] (t) - \mathcal{N} [v_0 + w_2] (t)}_{H^1} \leq K_{\varepsilon_0, \alpha} T \norm{w_1 - w_2}_{L^\infty_T H^1}.
    \end{gather*}
    
    On the other hand, Theorem \ref{prop:evol_group_schrod_A} shows that $t \mapsto e^{-i t A} v_0 - v_0$ is continuous with values in $H^1_0$ if $v_0 \in H^1_0 (\Omega)$ (resp. $H^1_0(\Omega) \cap H^2(\Omega)$ if $v_0 \in H^1_0 (\Omega) \cap H^2 (\Omega)$), so that there exists $T_0 > 0$ such that, for all $t \in [- T_0, T_0]$,
    \begin{equation*}
        \norm{e^{-i t A} v_0 - v_0}_X < \frac{\varepsilon}{2},
    \end{equation*}
    where $X = H^1_0$ if $v_0 \in H^1_0 (\Omega)$ (resp. $X = H^2$ if $v_0 \in H^1_0 (\Omega) \cap H^2 (\Omega)$).
    
    Therefore, by taking $\varepsilon < \varepsilon_0$ and then $T < T_0$ such that $K_{\varepsilon_0, \alpha} T < \frac{\varepsilon}{2} < 1$, for $k = 1, 2$, the application
    \begin{align*}
        Y_{k, \varepsilon, T} &\rightarrow Y_{k, \varepsilon, T} \\
        w &\mapsto \Bigl( t \mapsto e^{-i t A} v_0 - v_0 + \mathcal{N} [v_0 + w] (t) \Bigr)
    \end{align*}
    is a contraction in $Y_{k, \varepsilon, T}$ if $v_0 \in H^1_0$ (for $k=1$) or $v_0 \in H^1_0 \cap H^2$ (for $k=2$), leading to the conclusion by Picard fixed point theorem in Banach spaces.
\end{proof}

Now that the Cauchy theory for equation \eqref{eq:abstract_rel_for_Cauchy} is established, we can turn our attention to equation \eqref{eq:abstract_rel_for_Cauchy_2}.

\begin{lemma} \label{lem:H2_bound_abstract}
    Let $v \in \mathcal{C}([- T, T]; H^1_0 (\Omega) \cap H^2 (\Omega))$ a solution of \eqref{eq:abstract_rel_for_Cauchy_2} for some $v_0 \in H^1_0 (\Omega) \cap H^2 (\Omega)$ satisfying $\abs{\mathcal{V} [v (t)]} \geq \alpha$ for all $t \in [- T, T]$ for some $\alpha > 0$.
    Let $R = \sup_{t \in [- T, T]} \norm{v (t)}_{H^1}$.
    Then there holds for all $t \in [- T, T]$
    \begin{equation*}
        \norm{v (t)}_{H^2} \leq \Bigl( C_2 + \frac{\sqrt{C_1}}{\sqrt{c_1}} (1 + c_2) \Bigr) \norm{v_0}_{H^2} \, e^{C_{R, \alpha} t}.
    \end{equation*}
\end{lemma}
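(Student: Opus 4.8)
The plan is to insert the Duhamel identity \eqref{eq:abstract_rel_for_Cauchy_2} into the $H^2$ norm, estimate the linear flow and the nonlinear term separately, and close the estimate with Grönwall's inequality. Concretely, I would start from $v(t) = e^{-itA} v_0 + \mathcal{N}[v](t)$ and take the $H^2(\Omega)$ norm of both sides, so that by the triangle inequality $\norm{v(t)}_{H^2} \leq \norm{e^{-itA} v_0}_{H^2} + \norm{\mathcal{N}[v](t)}_{H^2}$.

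For the linear contribution I would invoke directly the $H^2$ estimate of Proposition \ref{prop:evol_group_schrod_A}, which gives
\begin{equation*}
    \norm{e^{-itA} v_0}_{H^2} \leq C_2 \norm{v_0}_{H^2} + (1+c_2) \frac{\sqrt{C_1}}{\sqrt{c_1}} \norm{v_0}_{H^1} \leq \Bigl( C_2 + \frac{\sqrt{C_1}}{\sqrt{c_1}} (1+c_2) \Bigr) \norm{v_0}_{H^2},
\end{equation*}
where the last inequality simply uses $\norm{v_0}_{H^1} \leq \norm{v_0}_{H^2}$; this produces exactly the prefactor appearing in the claimed bound.

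For the nonlinear contribution I would verify that the hypotheses of the second bullet of Lemma \ref{lem:est_mcalN} are met along the solution: by assumption $\abs{\mathcal{V}[v(t)]} \geq \alpha$ for all $t \in [-T, T]$, and by the very definition of $R$ one has $\norm{v(t)}_{H^1} \leq R$ for all such $t$. Writing $\mathcal{N}[v](t) = \int_0^t N[v(s)](t-s) \dd s$ and applying the estimate $\norm{N[v(s)](\tau)}_{H^2} \leq C_{R,\alpha} \norm{v(s)}_{H^2}$ (valid for all $\tau \in \mathbb{R}$) under the integral, I obtain
\begin{equation*}
    \norm{\mathcal{N}[v](t)}_{H^2} \leq \abs{\int_0^t \norm{N[v(s)](t-s)}_{H^2} \dd s} \leq C_{R,\alpha} \abs{\int_0^t \norm{v(s)}_{H^2} \dd s}.
\end{equation*}

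Setting $g(t) \coloneqq \norm{v(t)}_{H^2}$, which is finite and continuous on $[-T, T]$ since $v \in \mathcal{C}([-T, T]; H^2(\Omega))$, the two previous steps combine into the integral inequality $g(t) \leq \bigl( C_2 + \frac{\sqrt{C_1}}{\sqrt{c_1}} (1+c_2) \bigr) \norm{v_0}_{H^2} + C_{R,\alpha} \bigl| \int_0^t g(s) \dd s \bigr|$, and Grönwall's lemma then yields the announced exponential bound. The computation is essentially routine, so there is no serious obstacle; the only points deserving attention are that the continuity of $v$ in $H^2$ guarantees the finiteness of $g$ needed to run Grönwall, and that negative times are handled by keeping the absolute value on the time integral (equivalently, by the time-translation/reversal invariance noted in Remark \ref{rem:time_inv_abstract}), which accounts for reading the factor $e^{C_{R,\alpha} t}$ as $e^{C_{R,\alpha} \abs{t}}$ on the whole interval.
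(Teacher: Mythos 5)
Your proposal is correct and follows essentially the same route as the paper: triangle inequality on the Duhamel formula, the $H^2$ bound for $e^{-itA}$ from Proposition \ref{prop:evol_group_schrod_A}, the estimate \eqref{eq:est_mcalN_3} from Lemma \ref{lem:est_mcalN} for the nonlinear term, and Grönwall. Your remark that the bound should be read as $e^{C_{R,\alpha}\abs{t}}$ for negative times is a small point of care the paper glosses over, but it does not change the argument.
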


\begin{proof}
    By the assumptions, we can apply Lemma \ref{lem:est_mcalN}, and in particular \eqref{eq:est_mcalN_3}, so that we get with \eqref{eq:abstract_rel_for_Cauchy_2} and with Theorem \ref{prop:evol_group_schrod_A}:
    \begin{align*}
        \norm{v (t)}_{H^2} &\leq \norm{e^{- i t A} v_0}_{H^2} + \norm{\mathcal{N} [v] (t)}_{H^2} \\
            &\leq \Bigl( C_2 + \frac{\sqrt{C_1}}{\sqrt{c_1}} (1 + c_2) \Bigr) \norm{v_0}_{H^2} + C_{R, \alpha} \int_0^t \norm{v (s)}_{H^2} \dd s.
    \end{align*}
    The conclusion is achieved by Gronwall's Lemma.
\end{proof}

We are now in a position to establish the propagation of regularity for equation \eqref{eq:abstract_rel_for_Cauchy_2}:

\begin{corollary} \label{cor:prop_H2_if_H1}
    Let $T_1 > T_2 > 0$ and $v \in \mathcal{C}([- T_1, T_1]; H^1_0 (\Omega)) \cap \mathcal{C}((- T_2, T_2); H^2 (\Omega))$ a solution of \eqref{eq:abstract_rel_for_Cauchy_2} and assume that $\abs{\mathcal{V} [v (t)]} \geq \alpha$ for all $t \in [- T_1, T_1]$ for some $\alpha > 0$. Then we have $v \in \mathcal{C}([- T_1, T_1]; H^2 (\Omega))$.
\end{corollary}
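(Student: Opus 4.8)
The plan is to run a continuation (bootstrap) argument that enlarges the set of times on which $v$ is $H^2$-continuous until it exhausts $[-T_1, T_1]$. Set
\[
    T^* \coloneqq \sup\{ \tau \in (0, T_1] \mid v \in \mathcal{C}([-\tau, \tau]; H^2(\Omega)) \},
\]
which satisfies $T^* \geq T_2 > 0$ by hypothesis. The goal is to show that $T^* = T_1$ and that the supremum is attained.

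First I would establish a uniform a priori $H^2$ bound on $[-T^*, T^*)$. For any $\tau < T^*$ there holds $v \in \mathcal{C}([-\tau, \tau]; H^1_0(\Omega) \cap H^2(\Omega))$, and $\abs{\mathcal{V}[v(t)]} \geq \alpha$ on $[-\tau,\tau]$, so Lemma \ref{lem:H2_bound_abstract} applies with $v_0 = v(0)$ and gives $\norm{v(t)}_{H^2} \leq \bigl( C_2 + \frac{\sqrt{C_1}}{\sqrt{c_1}}(1+c_2) \bigr) \norm{v(0)}_{H^2}\, e^{C_{R, \alpha} \abs{t}}$, where $R \coloneqq \sup_{[-T_1, T_1]} \norm{v}_{H^1}$ is finite thanks to the $H^1_0$-continuity hypothesis. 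Since this bound is independent of $\tau$, letting $\tau \uparrow T^*$ produces a finite $M \coloneqq \sup_{t \in [-T^*, T^*)} \norm{v(t)}_{H^2}$.

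The central step is to upgrade this bound into $H^2$-continuity up to the endpoints $\pm T^*$, and I would do this directly from the integral equation rather than through a local existence time. Writing $\mathcal{N}[v](t) = e^{-itA} g(t)$ with $g(t) = \int_0^t N[v(s)](-s)\dd s$ as in \eqref{eq:def_N_mcalN}, Lemma \ref{lem:est_mcalN} yields $\norm{N[v(s)](-s)}_{H^2} \leq C_{R, \alpha} \norm{v(s)}_{H^2} \leq C_{R, \alpha} M$, so $g$ is Lipschitz in $H^2$ on $[0, T^*)$ and extends continuously to $t = T^*$ in $H^2$. Combining this with the $H^2$-boundedness and strong continuity of the group (Proposition \ref{prop:evol_group_schrod_A}), one checks that $e^{-itA} g(t)$ converges in $H^2$ as $t \uparrow T^*$; since $t \mapsto e^{-itA} v(0)$ is $H^2$-continuous on all of $\mathbb{R}$ by the same proposition, $v(t) = e^{-itA} v(0) + e^{-itA} g(t)$ from \eqref{eq:abstract_rel_for_Cauchy_2} has an $H^2$-limit as $t \uparrow T^*$, necessarily equal to $v(T^*)$ by the assumed $H^1_0$-continuity. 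The identical argument at $-T^*$ gives $v \in \mathcal{C}([-T^*, T^*]; H^2(\Omega))$, so the supremum is attained.

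Finally, suppose for contradiction $T^* < T_1$. Then $v(T^*) \in H^1_0(\Omega) \cap H^2(\Omega)$ with $\abs{\mathcal{V}[v(T^*)]} \geq \alpha$, so Proposition \ref{prop:Cauchy_abstract} applied at the initial time $t_0 = T^*$ (legitimate by Remark \ref{rem:time_inv_abstract}) furnishes a unique $H^2$-valued solution of \eqref{eq:abstract_rel_for_Cauchy_2} on $[T^* - \delta, T^* + \delta]$ for some $\delta > 0$. This solution is in particular an $H^1_0$-valued solution lying in a small $H^1$-ball around $e^{-i(t - T^*)A} v(T^*)$; as $v$ is $H^1_0$-continuous and also solves \eqref{eq:abstract_rel_for_Cauchy_2} with data $v(T^*)$ at $T^*$, the $H^1_0$-uniqueness part of Proposition \ref{prop:Cauchy_abstract} forces the two to coincide near $T^*$, whence $v$ is $H^2$-continuous on $[T^* - \delta, T^* + \delta]$. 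Together with the symmetric statement at $-T^*$ this contradicts the maximality of $T^*$, so $T^* = T_1$ and the endpoint continuity already proved gives $v \in \mathcal{C}([-T_1, T_1]; H^2(\Omega))$. I expect the endpoint-continuity step to be the main obstacle: the naive approach of iterating the local Cauchy theory fails because strong continuity of the Schrödinger group is not uniform over $H^2$-bounded sets, so one cannot bound the local existence time from below near $T^*$; the resolution is to read off $H^2$-continuity directly from the Lipschitz control on $g$ and the continuity of $e^{-itA}$ on $H^2$.
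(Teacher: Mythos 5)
Your proposal is correct and follows essentially the same route as the paper's proof: a maximal-time bootstrap, the uniform $H^2$ bound from Lemma \ref{lem:H2_bound_abstract}, extension of $H^2$-continuity to the closed interval via the uniform bound on $N[v(s)](\tau)$ and the representation $\mathcal{N}[v](t)=e^{-itA}\int_0^t N[v(s)](-s)\dd s$, and finally the contradiction with maximality via Proposition \ref{prop:Cauchy_abstract} applied at $t=\pm T_0$. Your explicit treatment of the endpoint continuity and of the uniqueness identification merely spells out steps the paper leaves implicit.
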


\begin{proof}
    We define
    \begin{equation*}
        T_0 \coloneqq \sup \{ T \in (0, T_1] \mid v \in \mathcal{C}((- T, T); H^2 (\Omega)) \} \geq T_2 > 0.
    \end{equation*}
    Then $v \in \mathcal{C}((- T_0, T_0); H^2 (\Omega))$. Let $R \coloneqq \sup_{t \in [- T_1, T_1]} \norm{v (t)}_{H^1}$ and apply Lemma \ref{lem:H2_bound_abstract}, so that for all $t \in (- T_0, T_0)$
    \begin{equation*}
        \norm{v (t)}_{H^2} \leq \Bigl( C_2 + \frac{\sqrt{C_1}}{\sqrt{c_1}} (1 + c_2) \Bigr) \norm{v_0}_{H^2} \, e^{C_{R, \alpha} t}.
    \end{equation*}
    From this estimate along with Lemma \ref{lem:est_mcalN} (which implies the uniform boundedness of $N [v (s)] (\tau)$ in $H^2$ for all $\tau \in \mathbb{R}$ and $s \in (- T_0, T_0)$), we get that 
    \[ \mathcal{N} [v] (t) = \int_0^t N [v (s)] (t-s) \dd s \in \mathcal{C}([- T_0, T_0], H^2 (\Omega)), \] 
    which in turn implies that $v \in \mathcal{C} ([- T_0, T_0], H^2 (\Omega))$ with \eqref{eq:abstract_rel_for_Cauchy}.

    To conclude, we need to prove that $T_0 = T_1$. By contradiction, if $T_0 < T_1$, then Proposition~\ref{prop:Cauchy_abstract} applied with the initial data $v (T_0) \in H^1_0 (\Omega) \cap H^2 (\Omega)$ shows that there exists $\delta t > 0$ such that $v \in \mathcal{C} ([T_0 - \delta t, T_0 + \delta t], H^1_0 (\Omega) \cap H^2 (\Omega))$. A similar argument can be performed for $t = -T_0$, leading to an obvious contradiction regarding the definition of $T_0$.
\end{proof}

We now turn to the question of existence and uniqueness for equation \eqref{eq:abstract_rel_for_Cauchy_2}:

\begin{corollary} \label{cor:Cauchy_less_abstract}
    Let $v_0 \in H^1_0 (\Omega)$ such that $\abs{\mathcal{V} [v_0]} \geq \alpha$ for some $\alpha > 0$.
    There exists~$T_0 > 0$ such that, for all $T \in (0, T_0)$, there exists a unique solution $v \in \mathcal{C}([- T, T]; H^1_0 (\Omega))$ to \eqref{eq:duhamel_dxu}. Furthermore, if $v_0 \in H^2 (\Omega)$, then we also have $v \in \mathcal{C}([- T, T]; H^2 (\Omega))$.
\end{corollary}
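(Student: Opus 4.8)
The plan is to deduce this corollary from the Cauchy theory for the shifted equation \eqref{eq:abstract_rel_for_Cauchy} already obtained in Proposition \ref{prop:Cauchy_abstract}, treating \eqref{eq:duhamel_dxu} as the special case $v_0 = \partial_x \varphi$ of the abstract fixed-point identity \eqref{eq:abstract_rel_for_Cauchy_2}, and then upgrading the uniqueness — which Proposition \ref{prop:Cauchy_abstract} only provides \emph{inside} the small ball $Y_{1, \varepsilon, T}$ — to a genuine uniqueness in the whole space $\mathcal{C}([-T,T]; H^1_0(\Omega))$ via a Gronwall argument. First, for existence, I would apply Proposition \ref{prop:Cauchy_abstract} with the given lower bound $\alpha$, producing $\varepsilon_0 > 0$; fixing any $\varepsilon \in (0, \varepsilon_0)$ and the associated $T_0$, for every $T \in (0, T_0)$ I obtain a unique $w \in Y_{1, \varepsilon, T}$ solving \eqref{eq:abstract_rel_for_Cauchy}, so that $v \coloneqq v_0 + w \in \mathcal{C}([-T,T]; H^1_0(\Omega))$ solves \eqref{eq:abstract_rel_for_Cauchy_2}. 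The proposition also yields, through \eqref{eq:mathcal_V}, the crucial lower bound $\abs{\mathcal{V}[v(t)]} \geq \frac{\alpha}{2}$ on the whole interval $[-T,T]$.

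Next I would establish uniqueness in the full space. Let $v_1, v_2 \in \mathcal{C}([-T,T]; H^1_0(\Omega))$ both solve \eqref{eq:abstract_rel_for_Cauchy_2} with the same datum $v_0$. For the right-hand side to be well defined along each $v_i$ (the terms $V_2, V_3$ involve $\log \abs{\mathcal{V}[v_i]}^2$ and $1/\abs{\mathcal{V}[v_i]}^2$), the function $\mathcal{V}[v_i(t)]$ must be non-vanishing; since $v_i \in \mathcal{C}([-T,T]; H^1_0(\Omega)) \hookrightarrow \mathcal{C}([-T,T]; \mathcal{C}([-a,a]))$ and $\mathcal{V}$ is continuous on $\mathcal{C}([-a,a])$ by Lemma \ref{lem:mcalV_Linfty}, the map $(t,x) \mapsto \abs{\mathcal{V}[v_i(t)](x)}$ is continuous on the compact set $[-T,T] \times [-a,a]$ and hence bounded below by some $\beta_i > 0$. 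Setting $\beta = \min(\beta_1, \beta_2) > 0$ and $R = \max_i \sup_{t} \norm{v_i(t)}_{H^1} < \infty$, both solutions satisfy the hypotheses $\norm{v_i(t)}_{H^1} \leq R$ and $\abs{\mathcal{V}[v_i(t)]} \geq \beta$ of Corollary \ref{cor:est_Schrogroup_Vj} throughout $[-T,T]$. Subtracting the two Duhamel identities and applying that Lipschitz estimate with $\tau = -(t-s)$ gives, for $t \in [0,T]$,
\[ \norm{v_1(t) - v_2(t)}_{H^1} \leq \int_0^t \sum_{j=1}^4 \norm{e^{- i (t-s) A}\bigl( V_j[v_1(s)] - V_j[v_2(s)] \bigr)}_{H^1} \dd s \leq 4 K_{R, \beta, 1} \int_0^t \norm{v_1(s) - v_2(s)}_{H^1} \dd s. \]
Gronwall's lemma forces $v_1 \equiv v_2$ on $[0,T]$, and the symmetric argument on $[-T,0]$ yields uniqueness on the entire interval.

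Finally, for the $H^2$ statement, assume in addition $v_0 \in H^2(\Omega)$. I would apply Proposition \ref{prop:Cauchy_abstract} in its $H^1_0(\Omega) \cap H^2(\Omega)$ version to obtain, on a possibly smaller interval $(-T', T')$ with $T' < T$, a solution $\tilde v \in \mathcal{C}((-T',T'); H^1_0(\Omega) \cap H^2(\Omega))$ of \eqref{eq:abstract_rel_for_Cauchy_2}; by the uniqueness just proved, $\tilde v = v$, so that $v \in \mathcal{C}((-T',T'); H^2(\Omega))$. Since we already know $v \in \mathcal{C}([-T,T]; H^1_0(\Omega))$ with $\abs{\mathcal{V}[v(t)]} \geq \frac{\alpha}{2}$ on the full interval, Corollary \ref{cor:prop_H2_if_H1} (with $T_1 = T$, $T_2 = T'$, and $\frac{\alpha}{2}$ in place of $\alpha$) then propagates the regularity to conclude $v \in \mathcal{C}([-T,T]; H^2(\Omega))$.

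The delicate point, and the reason the argument cannot be a one-line invocation of the contraction, is that the nonlinear terms $V_2, V_3$ are only Lipschitz on the region where $\mathcal{V}[v]$ stays uniformly away from zero. The main obstacle is therefore the uniqueness step: one must first extract, purely from continuity and compactness, a \emph{uniform} positive lower bound $\beta$ for each candidate solution over the full interval $[-T,T]$, and only then is the Lipschitz estimate of Corollary \ref{cor:est_Schrogroup_Vj} applicable to close the Gronwall estimate on all of $[-T,T]$ rather than merely inside the contraction ball $Y_{1,\varepsilon,T}$ furnished by Proposition \ref{prop:Cauchy_abstract}.
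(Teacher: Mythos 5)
Your existence step and your $H^2$ upgrade follow the paper's proof essentially verbatim (Proposition \ref{prop:Cauchy_abstract} for existence; the $H^2$ local solution identified with $v$ by uniqueness, then Corollary \ref{cor:prop_H2_if_H1} with the lower bound $\frac{\alpha}{2}$ from \eqref{eq:mathcal_V}). Where you genuinely diverge is the uniqueness step. The paper does \emph{not} run a Gronwall argument: it sets $w_1 = v_1 - v_0$, defines the exit time $T_* = \sup\{T_1 \mid \norm{w_1(t)}_{H^1} < \frac{\varepsilon_0}{2} \text{ on } (-T_1,T_1)\}$, invokes the uniqueness of the fixed point of Proposition \ref{prop:Cauchy_abstract} inside the ball $Y_{1,\varepsilon_0/2,T_1}$ for every $T_1 < T_*$, and then rules out $T_* < T$ by continuity (since $w_1(T_*) = w(T_*)$ would keep $w_1$ strictly inside the ball past $T_*$). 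The advantage of that route is that the needed lower bound $\abs{\mathcal{V}[v_1(t)]} \geq \frac{\alpha}{2}$ is \emph{derived} from Lemma \ref{lem:est_Vcal_bfb} wherever the argument actually uses the equation, namely inside the ball. Your route instead extracts a uniform lower bound $\beta > 0$ for $\abs{\mathcal{V}[v_i(t)](x)}$ from compactness and then closes a Gronwall estimate via Corollary \ref{cor:est_Schrogroup_Vj}; this is a more standard and arguably more robust argument (it gives uniqueness directly in the whole space rather than by ball-confinement), but it leans on the assertion that $\mathcal{V}[v_i(t)]$ is nowhere vanishing "for the right-hand side to be well defined". That is defensible as part of the definition of a solution (the terms $V_2$, $V_3$ involve $\log{\abs{\mathcal{V}[v_i]}^2}$ and $\abs{\mathcal{V}[v_i]}^{-2}$, and the paper itself is no more precise about what a solution outside the small ball means), but it is the one point where your argument rests on a convention rather than a lemma; if you wanted to make it airtight you would either state that non-vanishing of $\mathcal{V}[v_i(t)]$ is part of the definition of solution, or fall back on the paper's ball-exit argument. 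Everything else checks out.
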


\begin{proof}
    We get the existence on $H^1_0(\Omega)$ (resp. $H^1_0(\Omega) \cap H^2(\Omega)$ by applying Proposition \ref{prop:Cauchy_abstract} taking any $\varepsilon \in (0, \varepsilon_0)$, where $\varepsilon_0 > 0$ is given by Proposition \ref{prop:Cauchy_abstract} (for instance, take $\varepsilon = \frac{\varepsilon_0}{2}$), and by defining $v = v_0 + w$.
    
    We turn our attention to the uniqueness. Let $T \in (0, T_0)$ and $v_1 \in \mathcal{C}([- T, T]; H^1_0 (\Omega))$ be another solution of equation \eqref{eq:abstract_rel_for_Cauchy_2}. Define $w_1 = v_1 - v_0$. From the assumption, we have
    \begin{equation*}
        T_* \coloneqq \sup \{ T_1 > 0 \mid \forall t \in (- T_1, T_1), \norm{w_1 (t)}_{H^1} < \frac{\varepsilon_0}{2} \} > 0.
    \end{equation*}
    We want to prove that $T_* = T$ and $w_1 = w$. Let $T_1 < T_*$. From the assumption, $w_1$ is a solution of \eqref{eq:duhamel_dxu_diff} and belongs to $Y_{1, \frac{\varepsilon_0}{2}, T_1}$. Since $T_1 < T_0$, we can apply the uniqueness part of Proposition \ref{prop:Cauchy_abstract}, which gives $\restriction{w_1}{(-T_1, T_1)} = \restriction{w}{(-T_1, T_1)}$.
    
    Assume now that $T_* < T$.
    Since the previous equality is true for any $T_1 < T_* < T$, and by continuity of both $w$ and $w_1$, we get $w_1 (T_*) = w (T_*)$. Thus, $\norm{w_1 (T_*)}_{H^1} = \norm{w (T_*)}_{H^1} < \frac{\varepsilon_0}{2}$, and the continuity of $w_1$ gives a classical contradiction with the definition of $T_*$. The conclusion follows.

    As for the $H^2$ regularity, if $v_0 \in H^2$, then there also exists by Proposition \ref{prop:Cauchy_abstract} a unique solution $w_2 \in Y_{2, \varepsilon_2, T_2}$ to \eqref{eq:duhamel_dxu_diff} for some $\varepsilon_2 > 0$ small enough and $T_2 > 0$ small enough, and we can assume that $\varepsilon_2 < \frac{\varepsilon_0}{2}$ and $T_2 < T$. Then we also have $w_2 \in Y_{1, \varepsilon_0, T_2}$, and by uniqueness in Proposition \ref{prop:Cauchy_abstract} we get $w_2 = \restriction{w}{[-T_2, T_2]}$, which means that $v \in \mathcal{C}([- T_2, T_2]; H^2 (\Omega))$. Moreover, as $|\mathcal{V}\left[v(t)\right] | \geq \alpha$ for all $t \in \left[-T,T \right]$ by Proposition \ref{prop:Cauchy_abstract}, and as $v \in \mathcal{C}([- T, T]; H^1_0 (\Omega))$, we can apply Corollary \ref{cor:prop_H2_if_H1}, so $v \in \mathcal{C}((- T, T); H^2 (\Omega))$ which ends the proof.
    
\end{proof}

We can now turn to the main proposition of this section, which induces in particular Theorem \ref{theo:propagation}.

\begin{proposition} \label{prop:main_result_propagation}
    Let $\varphi \in \mathcal{D} \cap H^3 (\Omega)$ such that $v_0 \coloneqq \partial_x \varphi \in H^1_0$. Let $u \in \mathcal{C}(\mathbb{R}, H^2 (\Omega))$ solution to \eqref{logNLS} with Neumann boundary conditions and with $\varphi$ as initial data. Then there exists a maximal $T > 0$ such that $u \in \mathcal{C}((- T, T); H^3 (\Omega))$ and, if $T < \infty$, then $\inf_{t \in [- T, T], x \in \Omega} \abs{\mathcal{V} [\partial_x u (t)]} = 0$.
\end{proposition}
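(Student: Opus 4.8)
The plan is to run a blow-up (maximal time of existence) argument for the derivative $v = \partial_x u$, combining the abstract Cauchy theory of Corollaries \ref{cor:Cauchy_less_abstract} and \ref{cor:prop_H2_if_H1} with the identification of $v$ as a genuine solution of the Duhamel equation \eqref{eq:duhamel_dxu} provided by Lemma \ref{lem:link_duhamel}. Throughout, I would use three facts. Since $u \in \mathcal{C}(\mathbb{R}, H^2(\Omega))$ solves \eqref{logNLS} with Neumann conditions and is odd (Lemma \ref{lemma_odd}), the function $v = \partial_x u$ lies in $\mathcal{C}(\mathbb{R}; H^1_0(\Omega))$, the boundary condition $v(\pm a) = 0$ being exactly the Neumann condition; moreover $u(t) \in H^3(\Omega)$ if and only if $v(t) \in H^2(\Omega)$, so that $H^3$-regularity of $u$ is equivalent to $H^2$-regularity of $v$, both pointwise in $t$ and in the sense of continuity in time. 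Finally, by Lemma \ref{lem:equiv_mcalD}, an odd $H^2$-function $\phi$ belongs to $\mathcal{D}$ precisely when $\inf_\Omega |\mathcal{V}[\partial_x \phi]| > 0$; in particular $|\mathcal{V}[v_0]| \geq \alpha_0$ for some $\alpha_0 > 0$ since $\varphi \in \mathcal{D}$.

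I would first set $T^* := \sup\{ T > 0 \mid u \in \mathcal{C}((-T, T); H^3(\Omega)) \}$, which is the maximal time since the defining property is monotone in $T$. Local existence shows $T^* > 0$: applying Corollary \ref{cor:Cauchy_less_abstract} to $v_0 = \partial_x \varphi \in H^2(\Omega) \cap H^1_0(\Omega)$ with $|\mathcal{V}[v_0]| \geq \alpha_0$ produces a unique short-time $H^2$-solution of \eqref{eq:duhamel_dxu}, which coincides with $\partial_x u$ by Lemma \ref{lem:link_duhamel} and uniqueness, hence $u$ is $H^3$ for short times. If $T^* = \infty$ there is nothing left to prove, so I assume $T^* < \infty$ and argue by contradiction, supposing $\alpha := \inf_{[-T^*, T^*] \times \Omega} |\mathcal{V}[v]| > 0$. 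This infimum is well defined and attained because $(t, x) \mapsto \mathcal{V}[v(t)](x)$ is continuous on the compact set $[-T^*, T^*] \times \overline{\Omega}$ (recall $v \in \mathcal{C}(\mathbb{R}; H^1_0(\Omega)) \hookrightarrow \mathcal{C}(\mathbb{R}; \mathcal{C}(\overline{\Omega}))$).

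The key step is to upgrade the $H^2$-regularity of $v$ up to the closed endpoints $\pm T^*$. On $(-T^*, T^*)$ we already have $v \in \mathcal{C}((-T^*, T^*); H^2(\Omega))$ by definition of $T^*$; on the whole of $[-T^*, T^*]$ we have $v \in \mathcal{C}([-T^*,T^*]; H^1_0(\Omega))$ and, thanks to the non-degeneracy $|\mathcal{V}[v]| \geq \alpha$, the function $v$ solves the abstract equation \eqref{eq:abstract_rel_for_Cauchy_2} there (the derivation underlying Lemma \ref{lem:link_duhamel} being valid on any interval on which $|\mathcal{V}[v]| \geq \alpha$). Corollary \ref{cor:prop_H2_if_H1}, applied with $T_1 = T^*$ and any $T_2 \in (0, T^*)$, then gives $v \in \mathcal{C}([-T^*, T^*]; H^2(\Omega))$, i.e. $u \in \mathcal{C}([-T^*, T^*]; H^3(\Omega))$. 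In particular $u(T^*)$ is odd (Lemma \ref{lemma_odd}), belongs to $H^3(\Omega)$, satisfies $\partial_x u(T^*) \in H^1_0(\Omega)$ and $|\mathcal{V}[\partial_x u(T^*)]| \geq \alpha > 0$, so $u(T^*) \in \mathcal{D} \cap H^3(\Omega)$ by Lemma \ref{lem:equiv_mcalD}.

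It then remains to restart the problem at $t = T^*$. By Corollary \ref{cor:Cauchy_less_abstract} together with the time-translation invariance of Remark \ref{rem:time_inv_abstract}, applied to the data $v(T^*) \in H^2(\Omega) \cap H^1_0(\Omega)$ with $|\mathcal{V}[v(T^*)]| \geq \alpha$, there is $\delta > 0$ and a unique $H^2$-solution on $[T^* - \delta, T^* + \delta]$, which coincides with $v$ on the overlap by uniqueness; hence $v \in \mathcal{C}([-T^*, T^* + \delta]; H^2(\Omega))$. Running the symmetric argument at $-T^*$ and gluing yields $u \in \mathcal{C}((-T, T); H^3(\Omega))$ for some $T > T^*$, contradicting the maximality of $T^*$. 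Therefore $\alpha = 0$, which is the desired blow-up criterion. The main obstacle is precisely this closing-up step via Corollary \ref{cor:prop_H2_if_H1}: its proof rests on the Gronwall estimate of Lemma \ref{lem:H2_bound_abstract}, whose constant $C_{R,\alpha}$ degenerates as $\alpha \to 0$, so the non-vanishing of $\mathcal{V}[v]$ (together with the finite $H^1$ bound $R = \sup_{[-T^*,T^*]} \|v\|_{H^1}$ coming from $u \in \mathcal{C}(\mathbb{R}; H^2)$) is exactly what prevents the $H^2$-norm from blowing up before $T^*$; once $H^3$-regularity is secured up to $T^*$, the restart and contradiction are routine. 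One must also verify at each stage that $v = \partial_x u$ honestly solves the abstract Duhamel equation, which is where oddness and the Neumann condition enter, through $v \in H^1_0(\Omega)$ and the representation $u = x\, \mathcal{V}[\partial_x u]$ of Lemma \ref{lemma_integral_u}.
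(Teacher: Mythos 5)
Your argument is correct and rests on exactly the same machinery as the paper's proof (Lemma \ref{lemma_odd}, Lemma \ref{lemma_minoration_dx_u}, Lemma \ref{lem:link_duhamel}, and Corollaries \ref{cor:Cauchy_less_abstract} and \ref{cor:prop_H2_if_H1}); the only difference is organizational. The paper argues directly — it defines $T_1$ as the supremum of times up to which $\inf_{\Omega}\abs{\mathcal{V}[v(t)]}$ stays positive and shows $H^3$ regularity propagates on all of $(-T_1,T_1)$ by exhaustion over $T<T_1$ — whereas you define the maximal time of $H^3$ regularity and run the contrapositive, which forces you to add the (valid but avoidable) extra steps of closing up the $H^2$ bound at $t=\pm T^*$ via Corollary \ref{cor:prop_H2_if_H1} and restarting via Remark \ref{rem:time_inv_abstract}.
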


\begin{proof}
    Define
    \begin{equation*}
        T_1 \coloneqq \sup \{ T \mid \inf_{t \in (-T,T), x \in \Omega} \abs{\mathcal{V} [v (t)]} > 0 \}.
    \end{equation*}
    By applying Lemma \ref{lemma_odd} and Lemma \ref{lemma_minoration_dx_u}, we know that $T_1 > 0$. Moreover, if $T_1 < \infty$, then there obviously holds $\inf_{t \in [- T_1, T_1], x \in \Omega} \abs{\mathcal{V} [\partial_x u (t)]} = 0$
    
    Let $T \in (0, T_1)$, so that $\inf_{t \in (-T,T), x \in \Omega} \abs{\mathcal{V} [v (t]} > 0$.
    From Lemma \ref{lem:link_duhamel}, $v \coloneqq \partial_x u$ satisfies \eqref{eq:abstract_rel_for_Cauchy} with initial data $v_0 \coloneqq \partial_x \varphi$.
    As we know that $\partial_x u \in \mathcal{C}(\mathbb{R}, H^1_0 (\Omega))$, we can apply Corollary \ref{cor:Cauchy_less_abstract}, showing that $\partial_x u$ is the unique solution of \eqref{eq:abstract_rel_for_Cauchy} and that $\partial_x u \in \mathcal{C}([- T, T]; H^2 (\Omega))$, which means $u \in \mathcal{C}([- T, T]; H^3 (\Omega))$. Therefore, $u \in \mathcal{C}((- T_1, T_1); H^3 (\Omega))$.
\end{proof}

\section{Numerical experiments} \label{sec:numerics}

In this section, we illustrate our main results Theorem \ref{th:non-prop_regularity} and Theorem \ref{theo:propagation}, and extend the discussion to formulate new insights about the dynamics of equation \eqref{logNLS}. We use a standard spatial discretization of the 1D Laplace operator $\Delta_h$ with Neumann boundary conditions. The spatial domain $\Omega=[-16,16]$ is divided into $2^K+1$ spatial discretization points, where the integer $K$ is specified with each simulations. Note that the space step size is given by $h=\frac{2 \times 16}{2^K} = 2^{5-K}$.

We recall the definition of discrete Sobolev norms:
\[ \| u \|_{H^N_h}^2 \coloneqq \sum_{n=0}^N \| u \|_{\dot{H}^n_h}^2, \quad \| u \|_{\dot{H}^n_h}^2 \coloneqq \langle (-\Delta_h)^n u, u \rangle_h,  \]
where the discrete scalar product and discrete Laplace operator write
\[ \langle u, v \rangle_h = h \sum_{x \in h\Z \cap \Omega} u(x)\overline{v(x)} \quad \text{and} \quad \Delta_h u(x)= \frac{u(x+h)+u(x-h)-2u(x)}{h^2}   \]
for $x \in (h\Z \cap \Omega) \backslash \left\{-a,a \right\}$ and with associated Neumann boundary conditions. In the continuum limit $h \rightarrow 0$, these discrete norms converge to the standard Sobolev norms associated with $H^N(\Omega)$, ensuring consistency with the continuous case. 

To simulate the dynamics of \eqref{logNLS}, we use a second-order Strang splitting scheme. This approach involves splitting the equation into two sub-problems:
\[ \left|
\begin{aligned} 
& \ i  \partial_t v= - \Delta v, & \quad v(0)=v_0,  \\   
& \ i  \partial_t w= \lambda w \log|w|^2 , & \quad w(0)=w_0.
\end{aligned}
 \right. \]
For the logarithmic nonlinearity, we introduce the function $\phi$ defined as:
 \[ 
 \phi(w)=  \left\{
 \begin{aligned}
 &\log |w|^2 \quad \text{if} \ w \neq 0,  \\ 
 & 0 \quad \text{if} \ w = 0,
 \end{aligned} \right.
 \]
Note that although the expression $w \log |w|^2$ is mathematically well-defined at $w=0$, it leads to a numerical divergence at this point so we have to introduce the auxiliary function $\phi$ to avoid this pathological behavior. Solutions of the sub-problems are then given by the associated operators, for $t \in \R$,
\[  v(t)= \Phi^t_{\mathcal{L}} (v_0)= e^{it \Delta} v_0, \]
\[  w(t)=\Phi^t_{\mathcal{N}}(w_0)= e^{-i \lambda t \phi(w_0)} w_0. \]
The usual Strang splitting scheme recursively writes as 
\[  u^{j+1}= \Phi^{\frac{\tau}{2}}_{\mathcal{L}}  \circ \Phi^{\tau}_{\mathcal{N}} \circ \Phi^{\frac{\tau}{2}}_{\mathcal{L}} (u^j), \quad u^0=\varphi  \]
for all $0 \leq j \leq J$ with $J \tau = T$, where $\tau>0$ is the time step.

\subsection{Consistency of high discrete Sobolev norms} 
We fix a final time $T=0.01$ and $J=1000$. We take the initial condition
\[ \varphi(x)=\tanh(x), \quad x \in \Omega,  \]
so that $\varphi$ nearly satisfies Neumann boundary conditions. For increasing values of number of space discretization points $2^K+1$ with~$K=7,\ldots,11$, we plot the evolution in time of discrete Sobolev norms $H^N_h$ for several regularities $N=0,\ldots,5$ in Figure \ref{fig: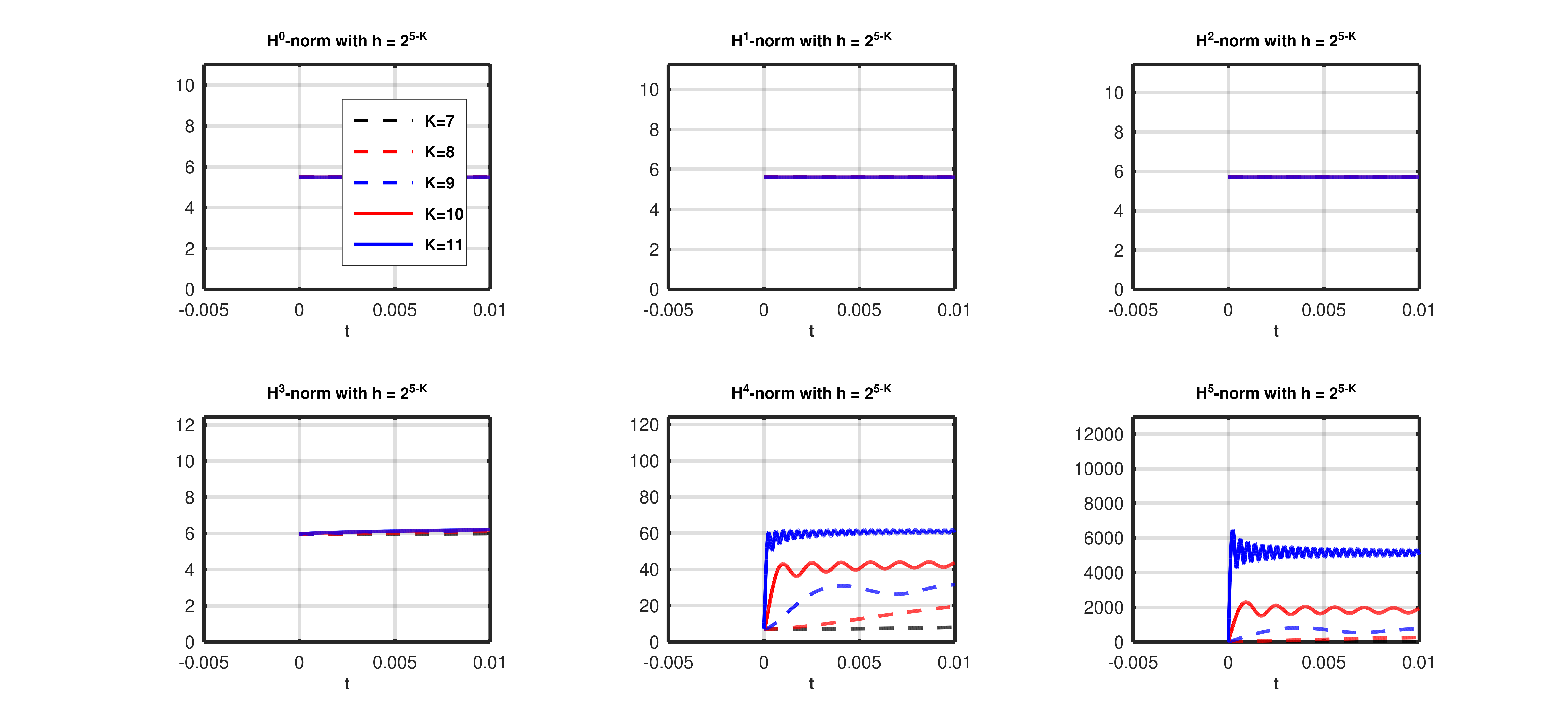}.

 \begin{figure}[h!]
	\centering
    \hspace*{-2.5cm} 
	\captionsetup{width=0.8\textwidth}
		\includegraphics[width=1.3\textwidth,trim = 0cm 0cm 0cm 0cm, clip]{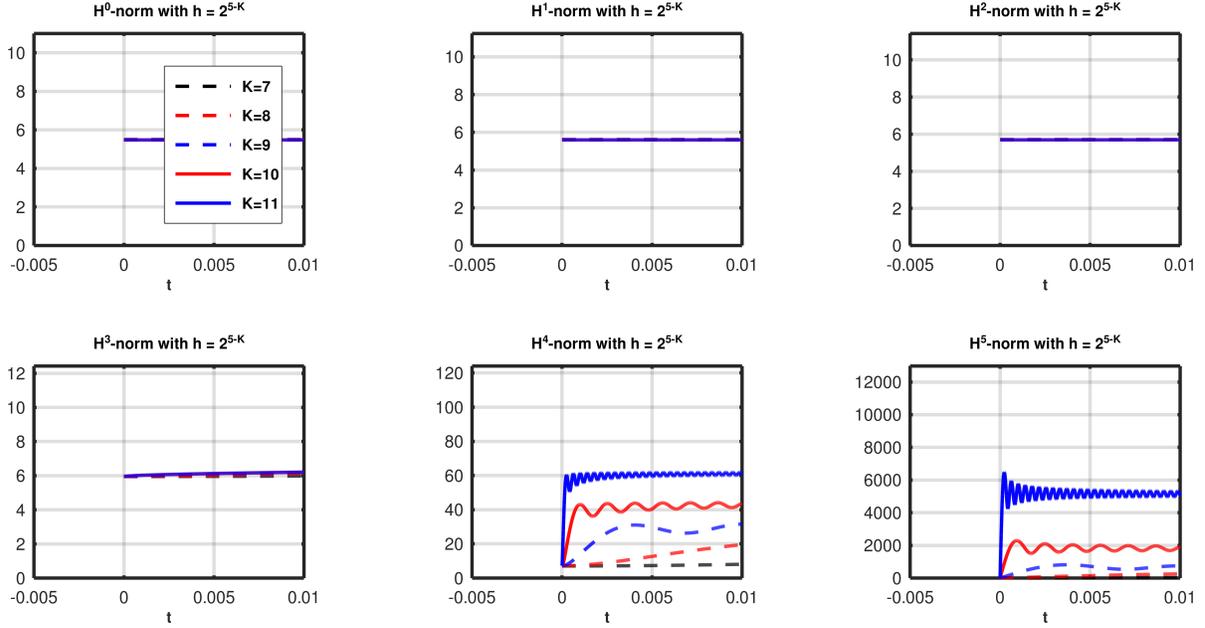}	
	\caption{Time evolution of the $H^s$-norms for the numerical solution $(u^j)_{0 \leq j \leq J}$ as the spatial discretization step $h$ tends to 0.}
	\label{fig:evolution_discrete_Sobolev.png}
\end{figure}

Let's first remark that although discrete Sobolev norms remain bounded due to their equivalence with the discrete $L^2_h$ norm, which is conserved under the Strang splitting flow, they are not uniformly bounded in $h$.

We observe that the discrete Sobolev norms up to $H^3$ converge to a finite limit as $h \rightarrow 0$. In contrast, the $H^4$- and the $H^5$-norms do not converge in the continuum limit and instead grow increasingly large as the spatial step size decreases. This aligns with the fact that for odd solutions non-vanishing outside 0, $H^3$ regularity is preserved for short times, while regularities higher than $\frac{7}{2}$ instantaneously blow-up. 

\subsection{Evolution of solutions} 

We now turn to an informal analysis of the dynamics of the logarithmic Schrödinger equation, focusing on the role of cancellation points. Using $K=8$, a final time $T=1$, and $J=1000$ time discretization points, we begin with the initial condition $\varphi(x)=\tanh(x)$. In Figure \ref{fig: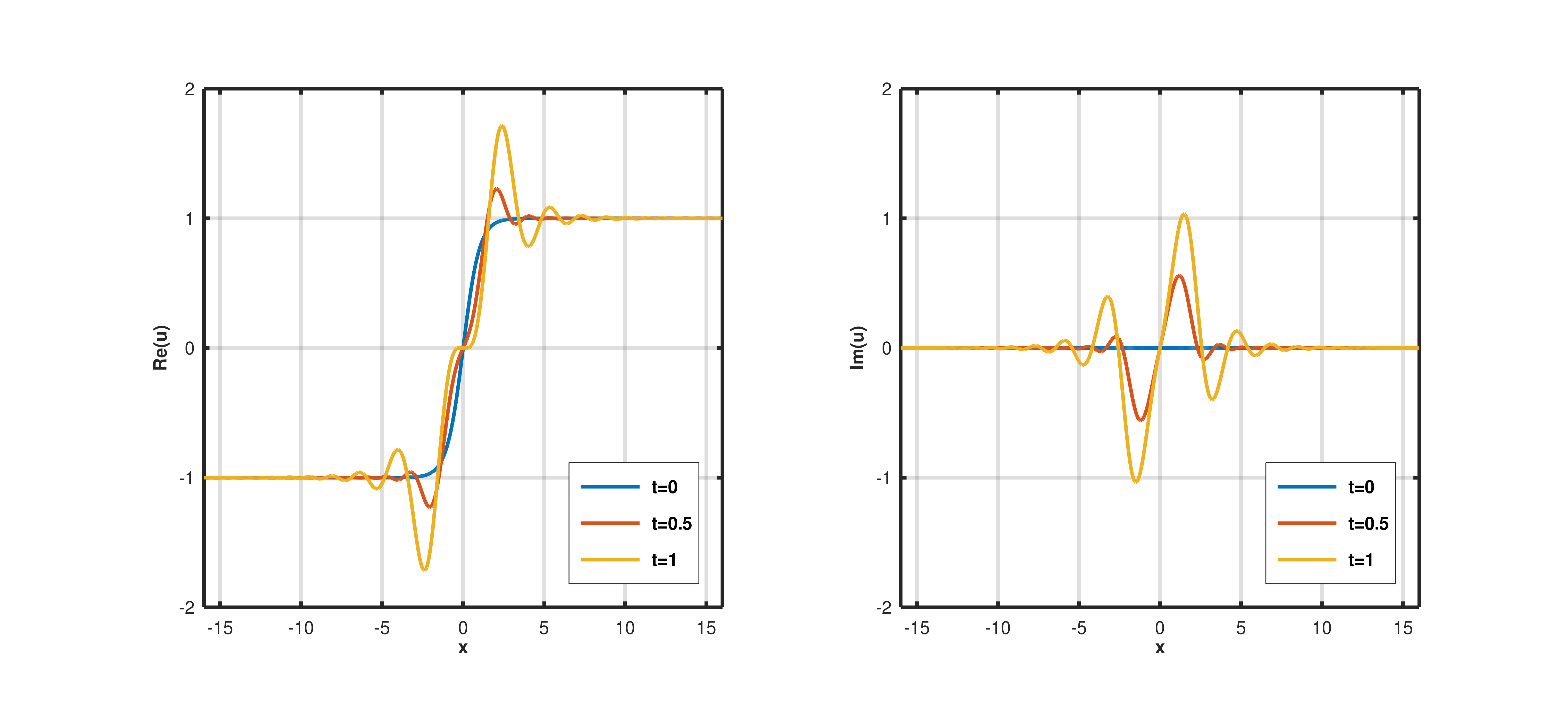}, we plot the real and imaginary parts of the solution at several time steps. We observe an increasing number of oscillations over time caused by the cancellation point at the origin. However, the cancellation point still remains a first-order cancellation, so that our result should apply on the interval of simulation. This stress out the fact that our propagation of regularity result is not only a short time result but could in practice apply for longer times.

 \begin{figure}[ht]
	\centering
	\captionsetup{width=0.8\textwidth}
		\includegraphics[width=0.9\textwidth,trim = 20cm 5cm 20cm 10cm, clip]{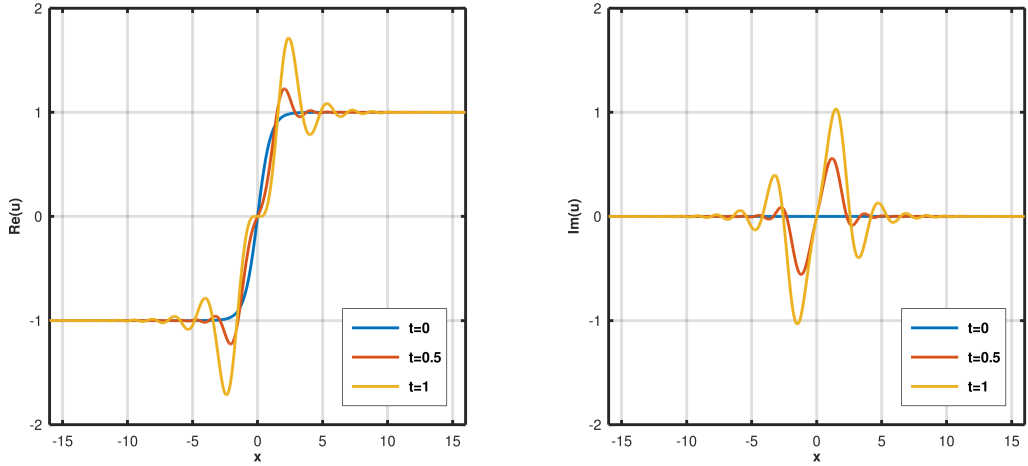}	
	\caption{Plots of the real part \textit{(left pannel)} and imaginary part \textit{(right pannel)} of solution of equation \eqref{logNLS} with initial condition $\varphi(x)=\tanh(x)$ at time $t=0$, $1$ and $2$.}
	\label{fig:evolution_tanh.png}
\end{figure}

Next, we consider the initial condition $\varphi(x)=1-\cos\left(\frac{\pi x}{16}\right)$ in Figure \ref{fig: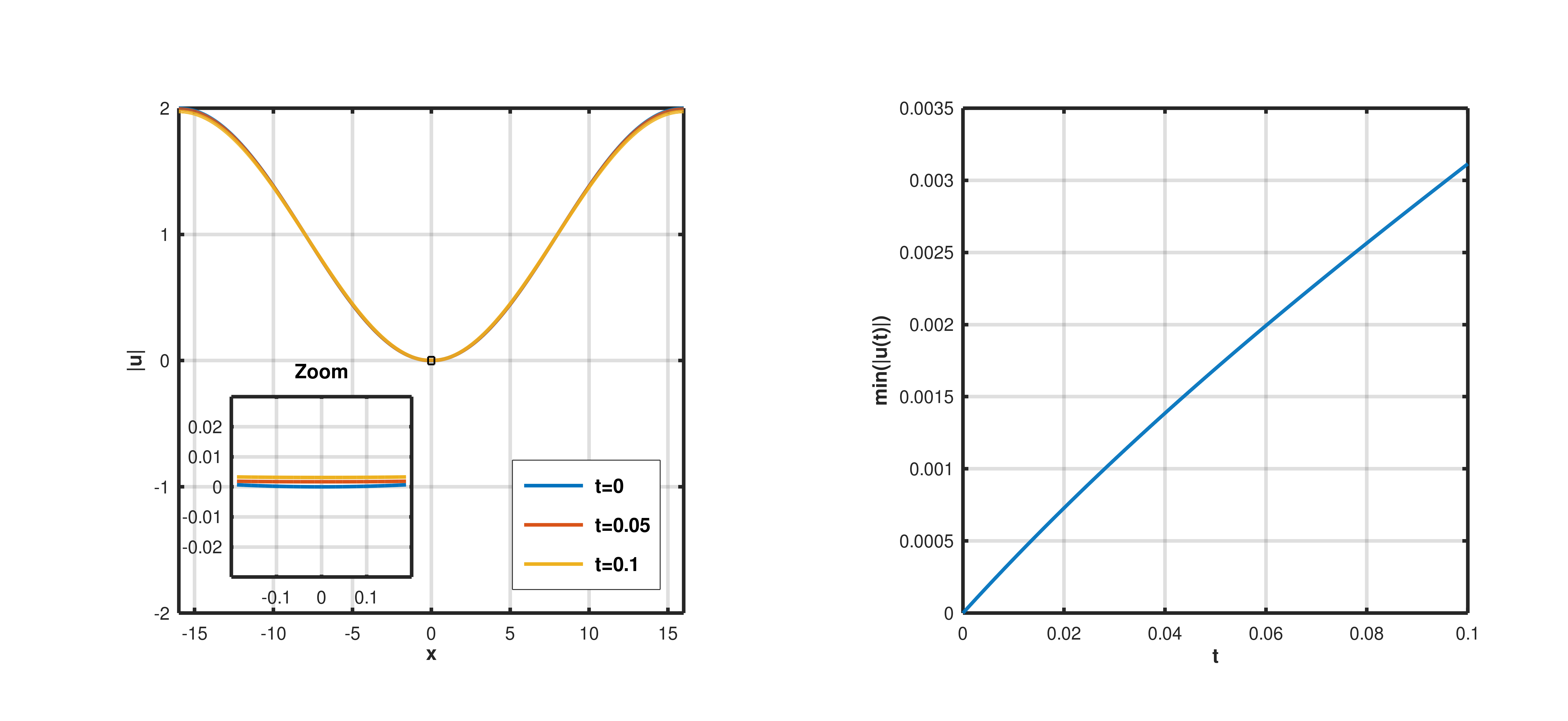} with parameters $K=11$, $T=0.1$ and $J=1000$. This setup introduces a second-order cancellation in the Taylor expansion around 0, a property that do not persist over time, as the solution immediately departs from zero. This behavior is illustrated in the right panel of Figure~\ref{fig:evolution_cos.png}, where we plot the minimum of the solution's absolute value. Regarding discrete Sobolev norms, similar simulations to those in the previous section indicate that the $H^3$-norm still appears to converge, while the $H^4$-norm does not.

\begin{figure}[ht]
	\centering
	\captionsetup{width=0.8\textwidth}
		\includegraphics[width=0.90\textwidth,trim = 15cm 5cm 10cm 10cm, clip]{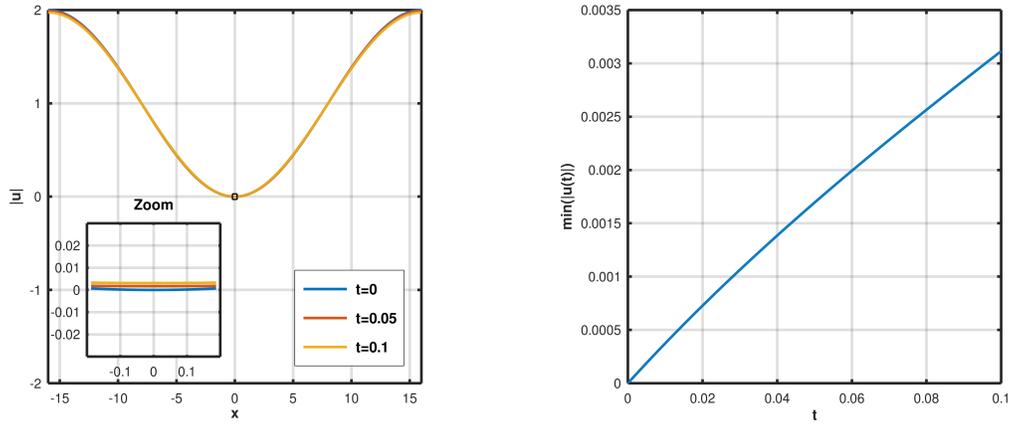}	
	\caption{Plots of the absolute value $|u|$ of solution of equation \eqref{logNLS} with initial condition $\varphi(x)=1-\cos\left(\frac{\pi x}{16}\right)$ at time $t=0$, $0.05$ and $0.1$ \textit{(left panel)}, as well as the evolution over time of the minimum of its absolute value \textit{(right panel)}.}
	\label{fig:evolution_cos.png}
\end{figure}

\section{Conclusion and perspectives} \label{sec:conclusion}

In this paper, we explored the role of cancellation points in the dynamics of the logarithmic Schrödinger equation. We demonstrated that a first-order cancellation leads to an instantaneous loss of regularity for $s>\frac{7}{2}$. Conversely, we proved that in the case of Neumann boundary conditions on a symmetric bounded one-dimensional domain, odd solutions with first-order cancellation and non-vanishing behavior outside $x=0$ preserve $H^3$ regularity. These theoretical results were further supported and enhanced by numerical simulations.\\

Of course several intriguing questions remain open, which we outline and discuss below. They also help to put the main results of this paper into perspective.

\begin{itemize}
\item Could we propagate further regularity $3<s \leq \frac{7}{2}$ for odd functions $\varphi \in \mathcal{D}\cap H^s(\Omega)$ exhibiting first-order cancellation? Addressing this question would require employing the fractional Sobolev-Slobodeckij seminorm
\[  \| f \|_{H^s}= \| f \|_{H^{\lfloor s \rfloor}}+ \int_{\Omega} \int_{\Omega} \frac{|f(x) - f(y)|^2}{|x-y|^{1+2(s-\lfloor s \rfloor)}} \dd x \dd y  \]
along with a careful adaptation of the equivalence properties of the operator $A$ discussed in Section~\ref{sec:_toy_model}, which is not straightforward.

\item On the other hand, is it possible to consider more general cancellation mechanisms for odd solutions of equation~\eqref{logNLS}, particularly ones leading to an instantaneous blow-up of the $H^s$-norm for $2<s\leq\frac{7}{2}$?
This problem seems quite hard, as the behavior of the solution is less clear: if $0$ is not a first-order cancellation point, other cancellation points may appear instantaneously near $0$; $0$ may remain a higher-order cancellation point without any other cancellation point to appear; or~$0$ may instantaneously become a first-order cancellation.
Describing precisely the nonlinearity and its effect on the behavior of the solution in each of these cases seems very complicated. 
In the latter case, which might be the more general case, a first step would be to find some "typical" even $v \in H^{s-1}((0,1))$ (say $v > 0$ for~$x > 0$) for~$s > 2$ such that the quantity $v \log \left(\int_0^x v(y) \dd y \right)$ is as less regular as possible, which is a nontrivial task.

 \item Could one also treat the case of even solutions, particularly those exhibiting second-order cancellations? Our numerical simulations from Section \ref{sec:numerics} suggest that such cancellations vanish instantaneously, which may lead to a propagation of some high regularity. Indeed, a quick computation shows that for a second-order cancellation~(i.e. with $\partial_{x}^2 \varphi (0) \neq 0$), we formally have $\partial_t u (0, 0) \neq 0$ so second-order cancellation is not propagated, in agreement with our numerical observations. A rigorous proof of such a result remains to be established.

 \item A very natural question is whether one can go beyond symmetric solutions of~\eqref{logNLS}, for instance by considering cases with parametrizable cancellation points. Naturally, such a parametrization introduces new terms in the non-moving frame, arising from time differentiation in the equation, which appear difficult to manage in a rigorous analysis.
 
 \item The behavior induced by cancellation points on other geometries, such as the torus~$\T$ or the full space~$\R$, also need to be understood. We strongly believe that our strategy should naturally extend to the periodic case (with double cancellation points) and to the full space for the logarithmic Gross-Pitaevskii equation, provided the non-vanishing behavior at infinity is carefully handled. By contrast,  the case of the logarithmic Schrödinger equation on $\R$ seems to remain a challenging problem, since cancellation points for large $x$ can instantaneously appear.
 
 \item Finally, it would be interesting to explore the case of higher dimensions in space. We emphasize that handling cancellation points in higher-dimensional bounded domains becomes significantly more challenging compared to the one-dimensional setting. 
 Indeed, in higher dimensions, a first-order cancellation point may not be isolated.
 Nevertheless, investigating the nonstandard behavior of solutions observed in the recent numerical study \cite{BaoMaWang2024} would be a very interesting direction for future work.
\end{itemize}

These questions point to several interesting directions for future research, and we plan to address some of these challenges in upcoming works.

\section*{Acknowledgments}
The authors acknowledges the support of the CDP C2EMPI, together with the French State under the France-2030 programme, the University of Lille, the Initiative of Excellence of the University of Lille, the European Metropolis of Lille for their funding and support of the R-CDP-24-004-C2EMPI project. The authors are grateful to Rémi Carles for his thoughtful comments on the final version of the manuscript.

\bibliographystyle{siam}
\bibliography{biblio.bib}

\end{document}